\numberwithin{equation}{section}
\newtheorem{thm}{Theorem}[section]
\newtheorem{lem}[thm]{Lemma}
\newtheorem{prop}[thm]{Proposition}
\newtheorem{defi}[thm]{Definition}
\newtheorem{cor}[thm]{Corollary}
\newtheorem{que}[thm]{Question}
\newtheorem{rem}[thm]{Remark}
\newtheorem{fact}[thm]{Fact}
\newcommand\M{\mathcal{M}}
\newcommand\E{\mathcal{E}}
\newcommand{\h}{\mathsf{h}}
\newcommand\T{\tau}
\begin{document}

\title[Asymmetric Burkholder inequalities]{Asymmetric Burkholder inequalities in noncommutative symmetric spaces}


\author[L. Wu]{Lian Wu}
\address{School of Mathematics and Statistics, Central South University, Changsha 410083, People's Republic of China}
\email{wulian@csu.edu.cn}

\author[R. Xia]{Runlian Xia}
\address{School of Mathematics and Statistics, University of Glasgow, Glasgow, G12 8QQ, UK}
\email{Runlian.Xia@glasgow.ac.uk}

\author[D. Zhou]{Dejian Zhou}
\address{School of Mathematics and Statistics, Central South University, Changsha 410083, People's Republic of China}
\email{zhoudejian@csu.edu.cn}

\subjclass[2010]{Primary: 46L53, 60G42; Secondary: 46L52}

\keywords{Noncommutative martingales, asymmetric, Burkholder inequalities, Johnson-Schechtman inequalities}


\thanks{Lian Wu is supported by the NSFC (No.11971484). Lian Wu and Runlian Xia are supported by the Royal Society (IEC/NSFC/211199-International Exchanges 2021 Cost Share (NSFC)). Dejian Zhou is supported by the NSFC (No.12001541).}

\begin{abstract} In this paper, we establish noncommutative Burkholder inequalities with asymmetric diagonals in symmetric operator spaces. Our proof mainly relies on a noncommutative good-$\lambda$ approach and a new complex interpolation result on asymmetric vector valued spaces. We  include as well the asymmetric versions of noncommutative Johnson-Schechtman inequalities.
\end{abstract}

\maketitle

\section{Introduction}

Based on duality arguments, an asymmetric version of Doob's maximal inequalities for noncommutative martingales was established by Junge \cite{Ju}. This result answered a question posed by Pisier and initiated the study of asymmetric martingale inequalities in noncommutative setting. Via algebraic atomic decompositions and Davis decompositions, Hong et al. \cite{HJP2016} further discussed  asymmetric maximal inequalities in noncommutative $L_p$ spaces. Recently, their results  were extended to the context of noncommutative symmetric spaces in \cite{RWZ2021}.

Following \cite{Ju,HJP2016,RWZ2021},  the purpose of this paper is to discuss asymmetric Burkholder inequalities for noncommutative martingales in symmetric operator spaces.

To put our results in an appropriate context, let us  present some development of Burkholder's inequalities in the classical case. Inspired by searching for Banach spaces linearly isomorphic to a complemented subspace of an $L_p$ space, Rosenthal \cite{Rosenthal1970} established the following statement:  for $2\leq p<\infty$ and for every sequence of independent mean zero random variables $(g_n)_{n\geq 1}$ in $L_p$, we have
\begin{equation}\label{Rosenthal}
\Big\| \sum_{n\geq 1} g_n \Big\|_p^p \simeq_p \Big( \sum_{n\geq 1} \|g_n\|_2^2\Big)^{\frac 1 2}  + \Big( \sum_{n\geq 1} \|g_n\|_p^p \Big)^{1/p},
\end{equation}
where the notation $A\simeq_p B$ means that there exists a constant $c_p$ depending only on $p$  such that $c_p^{-1} B\le A\le c_p B$.  In 1973,  Burkholder \cite{Bu1} extended the equivalence \eqref{Rosenthal} to the  martingale setting.   It was shown in \cite{Bu1} that if $2\leq p<\infty$ and $(f_n)_{n\geq 1}$ is an $L_p$ bounded martingale (adapted to some discrete-time filtration $(\Sigma_n)_{n\geq 1}$),  then
\begin{equation}\label{Burkholder}
\|f\|_p \simeq_p \|s(f)\|_p + \Big(\sum_{n\geq 1}\|df_n\|_p^p\Big)^{1/p},
\end{equation}
where $df_n$ is the martingale difference of $f$ and  $s(f)$ is  the conditioned square function of  $f$. Moreover, as proved in \cite{Bu1}, the diagonal term  $(\sum_n\|df_n\|_p^p)^{1/p}$ can be replaced by the maximal function of martingale difference sequence; namely, if $2\leq p<\infty $ and if $f\in L_p,$ then
\begin{equation}\label{Burkholder-maximal}
\|f\|_{p}\simeq_p \|s(f)\|_{p}+\|\sup_n |df_n|\|_{p}.
\end{equation}
The above inequalities \eqref{Rosenthal}, \eqref{Burkholder} and \eqref{Burkholder-maximal} have turned out to be  very useful probalistic tools and have been found many interesting generalizations and applications; see for instance \cite{Astashikin2014, AS2005, AS3, CD1988, HOS,JQW2018,JS1989, KaltonSMS}.

In 1997, Pisier and Xu \cite{PX} formulated a noncommutative analogue of Burkholder-Gundy inequalities. This pioneering work stimulates the development of noncommutative martingale theory. A lot of  classical martingale inequalities have been gradually generalized to the noncommutative setting; see \cite{BCO, CRX, Dirksen2,Ju,JM2007,JX2003,JX2008, Ran2002,Ran2005,Ran2007, Ra2023} and many others. In particular, the work due to Junge and Xu \cite{JX2003,JX2008} provided a comprehensive study of \eqref{Rosenthal}, \eqref{Burkholder} and \eqref{Burkholder-maximal} in the context of noncommutative $L_p$ spaces. One of the main results in \cite{JX2003} can be summarized as follows: if $2\leq p<\infty$ and if $x\in L_p(\mathcal{M}),$ then
\begin{equation}\label{nc1}
\big\|x\big\|_p \simeq_p \max\Big\{ \big\|s_c(x)\big\|_p,  \big\|s_r(x)\big\|_p, \big( \sum_{n\geq 1} \big\|dx_n\big\|_p^p \big)^{1/p}\Big\},
\end{equation}
where $s_c(x)$ and $s_r(x)$ denote the column and the row versions of  conditioned square functions for which we refer to the next section for concrete definitions. Here and what follows, $(\M,\tau)$ will always denote a finite von Neumann algebra equipped with a faithful normal finite trace $\T$ with $\T({\bf 1})=1$.  Moreover, by using an interpolation method, Junge and Xu \cite{JX2008} proved that, similar to the classical case, the diagonal term in \eqref{nc1} can be replaced by a maximal function version: if $2< p<\infty$ and if $x\in L_p(\mathcal{M}),$ then
\begin{equation}\label{nc2}
\big\|x\big\|_p \simeq_p \max\Big\{ \big\|s_c(x)\big\|_p,  \big\|s_r(x)\big\|_p,  \|(dx_n)_{n\geq 1}\big\|_{L_p(\mathcal M; \ell_\infty)}\Big\}.
\end{equation}
The above results \eqref{nc1}, \eqref{nc2} can be regarded as noncommutative extensions of \eqref{Burkholder}, \eqref{Burkholder-maximal}, respectively.

Recently, the equivalence \eqref{nc1} was investigated in more general symmetric operator spaces by some authors  (c.f. \cite{CR2021, Dirksen2,RW,Ran-Wu-Xu}). In particular, the following is proved in \cite{Ran-Wu-Xu}: if the symmetric Banach function space
$E$ with Fatou property lies in $ \textrm{Int}(L_2, L_q)$ for some $2< q < \infty$ and if $x\in E(\mathcal M)$, then
\begin{equation}\label{ncE}
\big\|x\big\|_E \simeq_E \max\Big\{ \big\|s_c(x)\big\|_E,  \big\|s_r(x)\big\|_E, \Big\|\sum_{n\geq1}dx_n\otimes e_n\Big\|_{E(\mathcal M\overline\otimes\ell_\infty)}\Big\},
\end{equation}
where $(e_n)_{n\geq 1} $ are the standard unit vectors in $\ell_\infty$.
Applying a noncommutative good-$\lambda$ approach (we refer to \cite{JOW2} for more information about this method), Jiao, Zanin and Zhou \cite{JZZ} generalized \eqref{nc2} to the symmetric operator spaces: if the symmetric Banach function space
$E$ lies in  $\textrm{Int}(L_p, L_q)$ for $2< p\leq q < \infty$ and if $x\in E(\mathcal M)$, then
\begin{equation}\label{ncE-maximal}
\big\|x\big\|_E \simeq_E \max\Big\{ \big\|s_c(x)\big\|_E,  \big\|s_r(x)\big\|_E, \|(dx_n)_{n\geq 1}\big\|_{E(\mathcal M; \ell_\infty)}\Big\},
\end{equation}

The purpose of this paper is to explore asymmetric versions of \eqref{ncE-maximal},  in the sense that the maximal diagonal part is replaced with an ``asymmetric" maximal diagonal term. As mentioned in the beginning, this is motivated by recent progress on asymmetric maximal inequalities of noncommutative martingales (\cite{Ju,HJP2016,RWZ2021}).

Our first version of asymmetric Burkholder inequalities reads as follows.
We should mention that a special case (i.e., $\theta$=1 or $0$) of the following result was achieved in \cite{RWZ2021} by using Davis decompositions.

\begin{thm}\label{discrete-no-convex}
Let $0\leq \theta\leq 1$. Assume that $E$ is a symmetric Banach function space  which is an interpolation of the couple $ (L_p,L_q)$ for  $2<p\leq q<\infty$. If $x\in E(\mathcal M)$, then
\begin{equation*}
\big\|x\big\|_E \simeq_E \max\Big\{ \big\|s_c(x)\big\|_E,  \big\|s_r(x)\big\|_E, \|(dx_n)_{n\geq 1}\big\|_{E(\mathcal M; \ell_\infty^\theta)},  \|(dx_n)_{n\geq 1}\big\|_{E(\mathcal M; \ell_\infty^{1-\theta})}\Big\}.
\end{equation*}
\end{thm}

We might also prove a stronger version of asymmetric Burkholder inequalities under a more strict assumption on the symmetric function space.

\begin{thm}\label{discrete-version-E}
Let $0\leq \theta\leq 1$. Assume that $E$ is a symmetric Banach function space  which is an interpolation of the couple $ (L_p,L_q)$ for  $2<p\leq q<\infty$.  If $E$ has Fatou norm and is $2$-convex, then for $x\in E(\mathcal M)$, we have
\begin{equation*}\label{nc-asymmetric-1}
	\big\|x\big\|_E \simeq_E \max\Big\{ \big\|s_c(x)\big\|_E,  \big\|s_r(x)\big\|_E, \|(dx_n)_{n\geq 1}\big\|_{E(\mathcal M; \ell_\infty^\theta)}\Big\}.
\end{equation*}
\end{thm}

Once Theorem \ref{discrete-no-convex} and Theorem \ref{discrete-version-E} have been established, a natural question is to consider the  dual versions. Our third result deals with this problem.

\begin{thm}\label{duality-ns}
Let $0\leq \theta\leq 1$. Assume that $E$ is a symmetric Banach function space  which is an interpolation of the couple $ (L_p,L_q)$ for  $1<p\leq q<2$. If  $E$ is $2$-concave and $x\in E(\mathcal M)$, then
\begin{equation*}\label{nc-asymmetric-2}
		\big\|x\big\|_E \simeq_E \inf\Big\{ \big\|s_c(y)\big\|_E+\big\|s_r(z)\big\|_E+\|(dw_n)_{n\geq 1}\big\|_{E(\mathcal M; \ell_1^\theta)}\Big\},
	\end{equation*}
where the infimum is taken over all decompositions $x=y+z+w$ with $y$, $z$ and $w$ being martingales.
\end{thm}

As one may notice, Theorem \ref{discrete-no-convex}, Theorem \ref{discrete-version-E}, and Theorem \ref{duality-ns} are new even for $E=L_p$.  They strengthen the results from \cite{Dirksen2,JZZ,RW,Ran-Wu-Xu}, and in particular, Theorem \ref{discrete-no-convex}  improves \cite[Corollary 4.11]{RWZ2021} greatly.

The proof of Theorem \ref{discrete-no-convex} is mainly based on   a noncommutative good-$\lambda$ approach. Our strategy for the proof of Theorem  \ref{discrete-version-E} and Theorem \ref{duality-ns} are as follows. To prove Theorem \ref{discrete-version-E}, we establish a new complex interpolation result for asymmetric vector valued spaces. Then, using a duality approach, we may deduce Theorem \ref{duality-ns} from Theorem \ref{discrete-version-E}.

Our arguments for Theorem \ref{discrete-version-E} can be also applied to get an asymmetric version of  noncommutative Johnson-Schechtman inequalities. Recall that Johnson and Schechtman \cite{JS1989} extended the Rosenthal inequalities \eqref{Rosenthal} to the symmetric  Banach function spaces. This result has been extensively investigated in the noncommutative setting (see \cite{Sukochev2012,JSZ2016,JSXZ,JSZZ}). Recently, a version of noncommutative Johnson-Schechtman inequalities involving maximal diagonal term was given in \cite[Theorem 1.5]{JZZ}. In this paper, by exploiting our new complex interpolation, we further extend their result to the asymmetric case (see Theorem \ref{main-result-2} below).

The paper is organized as follows. In the next
preliminary section, we review the basics of noncommutative spaces and noncommutative martingales. This section also includes some background on the pointwise product of functions spaces and complex interpolation method. Section \ref{asymmetric-space} is devoted to studying several fundamental properties of the asymmetric vector valued spaces $E(\mathcal M;\ell_\infty^\theta)$ and $E(\mathcal M;\ell_1^\theta)$.   In Section \ref{complex-interpolation}, we will establish a complex interpolation result for $E(\mathcal M;\ell_\infty^\theta)$, which constitutes one of the key ingredients in the proof of asymmetric inequalities. This result, together with a complementary argument, yields the corresponding interpolation result for the martingale Hardy spaces $\h_E^{\infty_\theta}$.  Section \ref{sec-asymmetric-burkholer} contains the proofs of asymmetric Burkholder inequalities (namely, Theorem \ref{discrete-no-convex} and Theorem \ref{discrete-version-E}).
A dual theorem between $E(\mathcal M;\ell_\infty^\theta)$ and $E(\mathcal M;\ell_1^\theta)$  is established in Section \ref{main-duality}. Combining the duality with a complementary result, we   also prove a similar duality for the martingale Hardy spaces $\h_E^{\infty_\theta}$ and $\h_E^{1_\theta}$. Using this duality, we   deduce Theorem \ref{duality-ns} from Theorem \ref{discrete-version-E}. The last section focus on asymmetric form of Johnson-Schechtman inequalities.

We close the introduction with an interesting question.

\begin{que} Let $0\leq \theta\leq 1$.  Assume that $E$ is a symmetric Banach function space satisfies $E\in \emph{Int}(L_p,L_q)$ with $\max\{2(1-\theta),2\theta\}< p\leq q<\infty$. For $x\in E(\mathcal M)$, do we have
\begin{equation*}
\inf\Big\{ \big\|s_c(y)\big\|_E+\big\|s_r(z)\big\|_E+\|(dw_n)_{n\geq 1}\big\|_{E(\mathcal M; \ell_1^\theta)}\Big\}\lesssim_E \big\|x\big\|_E,
\end{equation*}
where the infimum is taken over all decompositions $x=y+z+w$ with $y$, $z$, $w$ being martingales, and
\begin{equation*}
\big\|x\big\|_E \lesssim_E \max\Big\{ \big\|s_c(x)\big\|_E,  \big\|s_r(x)\big\|_E, \|(dx_n)_{n\geq 1}\big\|_{E(\mathcal M; \ell_\infty^\theta)}\Big\}?
\end{equation*}
\end{que}


\section{Preliminaries}\label{prelims section}

Throughout this paper, we write $A\lesssim_\alpha B$ if there is a constant $C_\alpha$ depending only on the parameter $\alpha$ such that  the inequality $A \leq C_\alpha B$ is satisfied, and write $A\simeq_\alpha B$ if both $A\lesssim_\alpha B$ and $B \lesssim_\alpha A$ hold.

\subsection{Noncommutative symmetric spaces}
Throughout, $(\M,\tau)$ will always denote a finite von Neumann algebra equipped with a faithful normal finite trace $\T$. Without loss of generality, we assume that $\tau({\bf 1})=1$. Let  $L_0(\M,\T)$ be  the associated topological $*$-algebra of measurable operators  in the sense of \cite{Nel74}; see also \cite{Terp1982} for more details. Since $\T$ is finite, $L_0(\M,\T)$ consists of all the operators affiliated to $\M$. For $x\in L_0(\M,\T)$, we define  its generalized singular number $\mu(x)$ by
$$\mu_t(x)=\inf\{s>0: \tau(\chi_{(s,\infty)}(|x|))\leq t\},\quad 0<t\leq 1,$$
where $\chi_{(s,\infty)}(|x|)$ is the spectral projection of $|x|$. If $\M$ is the abelian von Neumann algebra $L_\infty(0,1]$ with  the trace given by  integration  with respect to Lebesgue  measure, then $L_0(\M,\T)$ is  the space of measurable  complex valued functions  on $(0,1]$. For $f\in L_0(\M,\T)$,  $\mu(f)$ is the usual decreasing rearrangement of  $|f|$. We refer to \cite{PX3} for more information on noncommutative integration.

A Banach (or, quasi Banach) function space  $(E,\|\cdot\|_E)$ of measurable functions  on the interval $(0,1]$ is called symmetric if for every $g \in E$ and every measurable function $f$ with $\mu(f) \leq \mu(g)$, we have $f \in E$ and $\|f\|_E \leq \|g\|_E$. Moreover, $E$ is called fully symmetric if for every $g \in E$ and every measurable function $f$ with $\mu(f) \prec\prec \mu(g)$ (which means $\int_0^t \mu(s,f)ds\leq \int_0^t \mu(s,g)ds$ for $0<t\leq 1$), we have $f \in E$ and $\|f\|_E \leq \|g\|_E$.

A symmetric Banach function space $E$ is said to have Fatou norm if for every net $(f_\beta)\subset E$ and  $f\in E$ satisfying $0\leq f_{\beta} \uparrow f$, we have  $\|f_{\beta}\|_{E} \uparrow \|f\|_E$.
	A  symmetric Banach space  $E$ is said to have Fatou property if for every net $(f_\beta)\subset E$ and measurable function $f$ satisfying $0\leq f_{\beta} \uparrow f$ and $\sup_{\beta}\|f_{\beta}\|_E<\infty$, we have $f\in E$ and $\|f_{\beta}\|_{E} \uparrow \|f\|_E$. We say that $E$ has
order continuous norm if for every net $(f_\beta)$ in $E$ such that $f_\beta \downarrow 0$ we have $\|f_\beta\|_E \downarrow 0$.

For a symmetric Banach space  $E$, we define the  corresponding  noncommutative space by
\begin{equation}\label{E-def}
E(\M, \T) = \Big\{ x \in
L_0(\M,\T)\ : \ \mu(x) \in E \Big\}.
\end{equation}
Endowed with the norm
$\|x\|_{E(\M,\T)} := \| \mu(x)\|_E$,   the linear space $E(\M,\T)$ becomes a Banach space (\cite{Kalton-Sukochev,X}) and is usually referred to as the noncommutative symmetric space associated with $(\M,\T)$ and  $E$. An extensive discussion of the various properties of such spaces can be found in \cite{DDP3,PX3,X}.
 We remark that  in particular if  $E=L_p(0, 1]$ with $1\leq p<\infty$, then $E(\M, \T)=L_p(\M,\T)$  where $L_p(\M,\T)$ is  the noncommutative $L_p$-space associated with  $(\M,\T)$. In the sequel, $E(\M,\T)$ will be abbreviated to $E(\M)$ and the norm will be denoted by $\|\cdot\|_E$;  $E(\M)^+$ is denoted by the set of all positive elements in $E(\M)$.

In this paper, we  consider  symmetric spaces  that are interpolations of the couple $(L_p, L_q)$ for $1\leq p<q\leq \infty$. For a given compatible  Banach couple  $(X, Y)$, we recall that a Banach space $Z$ is called  an interpolation space if $X \cap Y \subseteq Z \subseteq X +Y$ and whenever  a bounded linear operator $T: X + Y \to X + Y$ is such that $T(X) \subseteq X$ and $T(Y) \subseteq Y$, we have $T(Z)\subseteq Z$
and $\|T:Z\to Z\|\leq C\max\{\|T :X \to X\| ,\; \|T: Y \to Y\|\}$ for some constant $C$.
In this case, we write $Z\in {\rm Int}(X,Y)$.

\begin{lem}[{\cite[Theorem II.3.4]{KPS1982}}]\label{fully-symmetric-lem}
A symmetric quasi Banach function space $E$ lies in $ \emph{Int}(L_1,L_\infty)$ if and only if $E$ is fully symmetric.
\end{lem}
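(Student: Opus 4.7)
The plan is to prove both directions via the Calder\'on--Mityagin description of the $K$-functional of the Banach couple $(L_1, L_\infty)$:
\begin{equation*}
K(t, f; L_1, L_\infty) = \int_0^t \mu_s(f)\, ds, \qquad 0 < t \le 1,
\end{equation*}
so that the submajorization $\mu(f) \prec\prec \mu(g)$ is exactly the pointwise comparison $K(\cdot, f) \le K(\cdot, g)$.

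First, for the implication \emph{fully symmetric $\Rightarrow$ interpolation}, I would take any linear $T$ that is bounded on both $L_1$ and $L_\infty$ and set $M := \max\bigl(\|T\|_{L_1\to L_1},\, \|T\|_{L_\infty\to L_\infty}\bigr)$. The standard subadditivity of $K(t,\cdot)$ and the endpoint bounds give $K(t, Tf) \le M\, K(t, f)$ for all $t > 0$ and all $f \in L_1 + L_\infty$, which reads $\mu(Tf) \prec\prec \mu(Mf)$. Applying the full symmetry assumption to $Tf$ and $Mf$ yields $\|Tf\|_E \le M\|f\|_E$ for every $f \in E$, so $E \in \textrm{Int}(L_1, L_\infty)$ with constant $1$.

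For the converse, I would use the Calder\'on--Mityagin theorem: whenever $\mu(f) \prec\prec \mu(g)$ there exists a linear operator $T$ with $\|T\|_{L_1\to L_1},\|T\|_{L_\infty\to L_\infty} \le 1$ and $Tg = f$. The interpolation hypothesis on $E$ then gives $\|T\|_{E\to E} \le C$, hence $\|f\|_E \le C\|g\|_E$. To upgrade the latter into the sharp inequality $\|f\|_E \le \|g\|_E$ required in the fully symmetric definition, I would replace the norm on $E$ by the equivalent $K$-monotone norm provided by the Brudnyi--Krugljak $K$-divisibility theorem; with respect to this renorming, the implication is immediate from $K(\cdot,f) \le K(\cdot,g)$. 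Since the class of fully symmetric quasi-Banach spaces is invariant under equivalent renormings, this completes the argument.

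The main obstacle is obtaining the exact constant $1$ in the fully symmetric inequality in the second direction. A direct combination of Calder\'on--Mityagin with the interpolation hypothesis only produces $\|f\|_E \le C\|g\|_E$, and eliminating $C$ is the genuinely nontrivial step that relies on the $K$-divisibility machinery. For the quasi-Banach case, one additionally invokes the Aoki--Rolewicz theorem so that the manipulations with the $K$-functional and the endpoint bounds carry over unchanged.
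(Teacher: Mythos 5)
The paper does not supply a proof of this lemma at all; it is cited verbatim as Theorem II.3.4 of Krein--Petunin--Semenov, so there is no ``paper's own proof'' to compare against. What you have written is a correct reconstruction of the classical argument, and it proceeds exactly as the standard references do: the Calder\'on formula identifies the $K$-functional of $(L_1,L_\infty)$ with the integral of the decreasing rearrangement, the easy direction follows from the subadditivity estimate $K(t,Tf)\le M\,K(t,f)$, and the hard direction invokes the Calder\'on--Mityagin theorem that $(L_1,L_\infty)$ is a Calder\'on couple, i.e.\ submajorization $\mu(f)\prec\prec\mu(g)$ can be realized by an operator with endpoint norms at most one.

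Two remarks. First, the constant issue you flag is genuine: the paper's definition of $\textrm{Int}(L_1,L_\infty)$ permits a constant $C$, whereas its definition of ``fully symmetric'' demands the exact inequality $\|f\|_E\le\|g\|_E$, so the two conditions as literally stated are only equivalent up to an equivalent renorming. However, you do not need the full Brudnyi--Krugljak $K$-divisibility machinery here: for $(L_1,L_\infty)$ the Calder\'on--Mityagin theorem is already exact, so interpolation with constant $C$ yields $\|f\|_E\le C\|g\|_E$ whenever $\mu(f)\prec\prec\mu(g)$, and passing to the $K$-monotone norm $\|f\|_{E'}:=\sup\{\|h\|_E:\mu(h)\prec\prec\mu(f)\}$ gives the exact inequality directly; invoking general $K$-divisibility is correct but heavier than necessary. (Krein--Petunin--Semenov's own convention makes interpolation exact for this couple, so in their formulation the two sides of the equivalence match with constant one without any renorming.) Second, the quasi-Banach case you mention is not actually exercised anywhere in this paper: the only place the lemma is used (the proof of Lemma 2.5 on complex interpolation) applies it to Banach spaces $E_1,E_2$, so your appeal to Aoki--Rolewicz, while correct, is surplus for the paper's purposes.
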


Given a symmetric space $E$, the  K\"othe dual of  a symmetric space $E$ is the function space defined by setting
 \[
 E^\times=\left\{ f \in L_0(0,1]: \int_0^1 |f(t)g(t)|\  dt <\infty, \, \forall g \in E\right\};
 \]
$$\|f\|_{E^\times}:=\sup\left\{\int_0^1|f(t)g(t)|\ dt : \|g\|_E \leq 1\right\},\quad f\in E^\times.$$
The space $E^\times$ is fully symmetric and has the Fatou property.

A symmetric Banach function space $E$ on $(0, 1]$ has a Fatou norm if and only if
$E$ embeds isometrically into its second K\"{o}the dual $E^{\times \times}$. It has the Fatou property if and only if $E=E^{\times \times}$ isometrically. It has order continuous
norm if and only if it is separable, which is also equivalent to the statement
$E^* = E^\times$.  It is well-known that if $E\in {\rm Int}(L_p, L_q)$ for some $1\leq p<q\leq \infty$, then $E^\times \in {\rm Int}(L_{q'}, L_{p'})$ where $p'$ and $q'$ denote the conjugate indices of $p$ and $q$ respectively.

For $0<r<\infty$,  the $r$-convexification  of a Banach function space $E$ is defined by
\begin{equation*}
E^{(r)} :=\left\{ f \in L_0(0,1] : |f|^r \in E \right\}
\end{equation*}
equipped with the norm (or, quasi-norm)
 \begin{equation*}\big\|f \big\|_{E^{(r)}} = \|| f|^r\|_E^{\frac{1}{r}}.
\end{equation*}
 It is easy to verify that if $E$ is symmetric then so is  $E^{(r)}$.  Moreover, according to \cite[Proposition~3.5]{Dirk-Pag-Pot-Suk},  if  $E\in {\rm  Int}(L_p,L_q)$ for some $1\leq p < q \leq \infty$, then we have
\begin{equation}\label{eq: int}
  E^{(r )} \in {\rm Int}(L_{pr}, L_{qr}).
\end{equation}
 These facts will be used repeatedly throughout.

 Let $0<p, q<\infty$.  A symmetric quasi Banach function space $E$ is said to be $p$-convex if there is a constant $c>0$ such that for any finite sequence $(f_i)_{i=1}^n$ in $E$ we have
 	$$\left\|\left(\sum_{i=1}^n|f_i|^p\right)^{1/p}\right\|_E\leq c \left(\sum_{i=1}^n \|f_i\|_E^p\right)^{1/p}.$$
The least constant satisfying the above inequality is called the $p$-convexity constant of $E$ and will be denoted by $M^{(p)}(E)$.  A symmetric quasi Banach function space $E$ is said to be $q$-concave if there is a constant $c>0$ such that for any finite sequence $(f_i)_{i=1}^n$ in $E$ we have
 	$$\left(\sum_{i=1}^n \|f_i\|_E^q\right)^{1/q} \leq c \left\|\left(\sum_{i=1}^n|f_i|^q\right)^{1/q}\right\|_E.$$
 The least constant satisfying the above inequality is called the $q$-concavity constant of $E$ and will be denoted by $M_{(q)}(E)$.
From
the definitions, one can easily show that if $E$ is $p$-convex and $q$-concave for $0 <
p \leq  q < \infty$, then $E^{(r)}$
is $pr$-convex and $qr$-concave. Moreover, if $E$ is $p$-convex for some $0 < p < \infty$, then $E$ is $r$-convex for any $0 < r \leq p $ and $M^{(r)} \leq M^{(p)}$. Similarly, if $E$ is $q$-concave for some $0 < q <\infty$, then $E$ is $s$-concave for any $q \leq s \leq \infty$
and $M_{(s)}\leq M_{(q)}$. It is also well known that if $E$ is $q$-concave for some $0<q<\infty$, then $E$ has order continuous norm, which is equivalent to saying that $E$ is separable (see \cite[Lemma 4.12]{Dirksen-Thesis}).

 The following result is taken from \cite[Proposition 3.4]{DDS2014}.

\begin{prop}\label{DDS2014}
	If $E$ is a symmetric Banach function space with Fatou norm, and if $E$ is $p$-convex for some $0<p<\infty$, then there exits a symmetric norm  $\|\cdot\|_0$ on $E^{(1/p)}$ such that
	$$\|x\|_{0}\leq \|x\|_{E^{(1/p)}}\leq M^{(p)}(E)^p\|x\|_{0}.$$
\end{prop}

We now discuss the pointwise  product of function spaces that we will need in the sequel. Let $E_i$ ($i=1,2$) be symmetric Banach function spaces on $(0,1]$. We define the pointwise product  of $E_1$ and $E_2$ by setting:
$$E_1 \odot E_2 =\big\{x:x=x_1x_2, x_1\in E_1, x_2\in E_2\big\}.$$
For $f\in E_1\odot E_2$, set
$$\|f\|_{E_1 \odot E_2} :=\inf\left\{\|f_1\|_{E_1}\|f_2\|_{E_2}:f_1\in E_1, f_2\in E_2, f=f_1f_2\right\}.$$
According to \cite[Theorem 2]{KLM}, $E_1 \odot E_2$  is a symmetric quasi Banach function space  when equipped with the quasi norm $\|\cdot\|_{E_1 \odot E_2}$. Similarly, one can also define the product of symmetric operator spaces as follows:
$$E_1(\mathcal M) \odot E_2(\mathcal M) =\big\{x:x=x_1x_2, x_1\in E_1(\mathcal M), x_2\in E_2(\mathcal M)\big\}.$$
For $x\in E_1(\mathcal M) \odot E_2(\mathcal M)$, we define
$$\|x\|_{E_1(\mathcal M) \odot E_2(\mathcal M)} :=\inf\big\{\|x_1\|_{E_1}\|x_2\|_{E_2}:x_1\in E_1(\mathcal M), x_2\in E_2(\mathcal M), x=x_1x_2\big\}.$$
From \cite[Theorems 3, 4]{Sukochev2016} (see also \cite[Theorem 2.5]{Bekjan2015}), we know that if $E=E_1\odot E_2$, then
$E(\mathcal M)= E_1(\mathcal M)\odot E_2(\mathcal M).$

We record some facts about pointwise products of symmetric spaces for further use.

\begin{lem}[{\cite[Corollary 2, Theorem 1]{KLM}}]\label{prod-1}
Let $X$, $Y$ be  symmetric Banach function spaces. Then the following hold:
\begin{enumerate}[\rm (i)]
  \item if $1<p<\infty$, then $[X^{(p)}]^\times=[X^\times]^{(p)}\odot L_{p'},$ where $p'$ is the conjugate index of $p$;
  \item if $0<p<\infty$, then $(X\odot Y)^{(p)}=X^{(p)}\odot Y^{(p)}$.
\end{enumerate}
\end{lem}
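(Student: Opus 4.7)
The statement is quoted directly from \cite{KLM}, so the paper does not require an independent argument; the following outline indicates how I would reconstruct it. Part (ii) is essentially elementary: if $f\in(X\odot Y)^{(p)}$ then $|f|^p=gh$ for some nonnegative $g\in X$ and $h\in Y$, so taking $p$-th roots yields $f=\mathrm{sgn}(f)\,g^{1/p}h^{1/p}$ with $g^{1/p}\in X^{(p)}$ and $h^{1/p}\in Y^{(p)}$. The reverse inclusion and the matching of norms follow by taking $p$-th powers of a factorization in $X^{(p)}\odot Y^{(p)}$ and tracking the infima of products.

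The cleanest route to part (i) goes through the Calder\'on--Lozanovskii construction $E_0^{1-\theta}E_1^{\theta}$, which is the space of measurable $f$ with $|f|\leq c\,|g|^{1-\theta}|h|^{\theta}$ for some $g,h$ in the unit balls of $E_0,E_1$. First I would record the identification
\[
E^{(p)}=E^{1/p}\,L_\infty^{1/p'},
\]
which follows from the observation that $f\in E^{(p)}$ is equivalent to $|f|=g^{1/p}\cdot 1^{1/p'}$ for some $g\in E$. The Calder\'on--Lozanovskii duality
\[
\bigl(E_0^{1-\theta}E_1^{\theta}\bigr)^{\times}=(E_0^{\times})^{1-\theta}(E_1^{\times})^{\theta}
\]
applied to $X^{(p)}=X^{1/p}L_\infty^{1/p'}$ (with $L_\infty^{\times}=L_1$) then yields
\[
[X^{(p)}]^{\times}=(X^{\times})^{1/p}\,L_1^{1/p'}.
\]
A direct factorization argument, analogous to part (ii) but with two different exponents, identifies this right-hand side with $[X^{\times}]^{(p)}\odot L_{p'}$: writing $|f|=|u|^{1/p}|v|^{1/p'}$ with $u\in X^{\times}$ and $v\in L_1$ is equivalent to $f=\phi\psi$ with $|\phi|^p=|u|\in X^{\times}$ and $|\psi|^{p'}=|v|\in L_1$, i.e., $\phi\in[X^{\times}]^{(p)}$ and $\psi\in L_{p'}$, with matching infima of norms.

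The main technical ingredient is the Calder\'on--Lozanovskii duality formula, which requires the ambient space to enjoy the Fatou property; since we work throughout with symmetric Banach function spaces arising as interpolation spaces for $(L_p,L_q)$, this hypothesis is either automatic or can be arranged by passing to the K\"othe bidual without affecting the K\"othe dual. As a quick sanity check, the inclusion $[X^{\times}]^{(p)}\odot L_{p'}\hookrightarrow[X^{(p)}]^{\times}$ admits a one-line independent verification via H\"older: for $f=\phi\psi$ with $\phi\in[X^{\times}]^{(p)}$, $\psi\in L_{p'}$, and any $g\in X^{(p)}$,
\[
\int|fg|\leq\Bigl(\int|\phi g|^p\Bigr)^{1/p}\|\psi\|_{p'}\leq\|\phi\|_{[X^{\times}]^{(p)}}\|g\|_{X^{(p)}}\|\psi\|_{p'},
\]
giving $\|f\|_{[X^{(p)}]^{\times}}\leq\|\phi\|_{[X^{\times}]^{(p)}}\|\psi\|_{p'}$. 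The reverse inclusion, which is the content of Calder\'on--Lozanovskii duality, is the genuine work.
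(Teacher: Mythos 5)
The paper offers no proof of this lemma at all---it is quoted directly from \cite{KLM}---and your reconstruction is correct and follows essentially the same Calder\'on--Lozanovskii route that the cited source uses (part (ii) by raising/lowering factorizations to the $p$-th power, part (i) via $X^{(p)}=X^{1/p}L_\infty^{1/p'}$ and the duality $(E_0^{1-\theta}E_1^{\theta})^{\times}=(E_0^{\times})^{1-\theta}(E_1^{\times})^{\theta}$). Two small points to tighten: when identifying the Calder\'on product $(X^{\times})^{1/p}L_1^{1/p'}$ with $[X^{\times}]^{(p)}\odot L_{p'}$ you should pass from the defining inequality $|f|\leq |u|^{1/p}|v|^{1/p'}$ to an exact pointwise factorization by absorbing the bounded ratio $|f|\big(|u|^{1/p}|v|^{1/p'}\big)^{-1}\leq 1$ into one of the factors; and Lozanovskii's duality formula for K\"othe duals holds for arbitrary Banach function spaces (the K\"othe dual side automatically has the Fatou property), so your Fatou-property caveat, while harmless, is not actually needed.
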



\begin{lem}[{\cite[Lemma 3.1]{Bekjan2018}}]\label{useful-factorization}
Let $E$, $E_1$ and $E_2$ be symmetric Banach function spaces on $(0,1]$ such that $E=E_1\odot E_2$. If $x\in E(\mathcal M)^+$, then for any $\varepsilon>0$, there exist $a\in E_1(\mathcal M)^+$ and $b\in E_2(\mathcal M)^+$ such that $x=ab$,
$$\|a\|_{E_1} \|b\|_{E_2}\leq (1+\varepsilon)\|x\|_{E}$$
and $a$ is invertible with bounded inverse.
\end{lem}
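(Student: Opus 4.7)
My plan is to reduce the statement to a commutative factorization by restricting to the abelian von Neumann subalgebra $\mathcal A:=W^*(x)\subseteq \mathcal M$ generated by the positive operator $x$. By the spectral theorem, the restriction of $\tau$ makes $\mathcal A$ trace-preservingly $*$-isomorphic to $L_\infty(\Omega,\nu)$ for some probability space $(\Omega,\nu)$, and under this isomorphism $x$ corresponds to a non-negative function $\widetilde x$ with the same decreasing rearrangement. In particular $\|x\|_E=\|\widetilde x\|_{E(\Omega,\nu)}$, and similarly for any other symmetric norm. The hypothesis $E=E_1\odot E_2$ on $(0,1]$ transfers to the identity $E(\Omega,\nu)=E_1(\Omega,\nu)\odot E_2(\Omega,\nu)$ by specializing \cite[Theorems 3,4]{Sukochev2016} to $\mathcal A$.

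Using this product identity I factor $\widetilde x=uv$ with $u\in E_1(\Omega,\nu)$, $v\in E_2(\Omega,\nu)$ and $\|u\|_{E_1}\|v\|_{E_2}\le(1+\varepsilon/3)\|\widetilde x\|_E$. Because $\mathcal A$ is abelian one has $|u||v|=|uv|=\widetilde x$ pointwise, and passing to absolute values preserves the $E_i$-norms, so $\widetilde a_0:=|u|$ and $\widetilde b_0:=|v|$ are non-negative with $\widetilde a_0\widetilde b_0=\widetilde x$ and the same norm bound. Pulling back under the isomorphism gives commuting positive operators $a_0,b_0\in\mathcal A$ with $a_0b_0=x$ and $\|a_0\|_{E_1}\|b_0\|_{E_2}\le(1+\varepsilon/3)\|x\|_E$. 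To enforce invertibility of the first factor I rescale harmlessly so that $\|a_0\|_{E_1}=\|b_0\|_{E_2}$, then for $\eta>0$ set $a:=a_0+\eta\mathbf{1}$ and $b:=a^{-1}x$ inside $\mathcal A$. Then $a\ge\eta\mathbf{1}$ is invertible with bounded inverse, $ab=x$, and $b=b_0\cdot a_0(a_0+\eta)^{-1}\le b_0$ in $\mathcal A^+$, so $\|b\|_{E_2}\le\|b_0\|_{E_2}$; combined with the bound $\|a\|_{E_1}\le\|a_0\|_{E_1}+\eta\|\mathbf{1}\|_{E_1}$ and the finiteness of $\|\mathbf 1\|_{E_1}$ (the trace is finite) an appropriate choice of $\eta$ delivers $\|a\|_{E_1}\|b\|_{E_2}\le(1+\varepsilon)\|x\|_E$.

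The main obstacle is that in a genuinely noncommutative setting the factorization $x=x_1x_2$ handed to us by $E(\mathcal M)=E_1(\mathcal M)\odot E_2(\mathcal M)$ does not come with positive factors: one has $|x_1x_2|\ne |x_1||x_2|$ in general, and polar decompositions mix the two factors in a way incompatible with a sharp norm bound. The key insight is to bypass this by restricting to the abelian subalgebra $W^*(x)$, inside which positivity is obtained for free by taking absolute values. Everything else is routine bookkeeping: the transfer of the product identity to the possibly atomic space $(\Omega,\nu)$, functional calculus inside $\mathcal A$, and the standard ``add $\eta\mathbf{1}$'' regularization.
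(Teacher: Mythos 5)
Your proof is correct, and it takes the natural approach of reducing to the abelian subalgebra $\mathcal A = W^*(x)$; note that the paper itself gives no proof and simply cites \cite[Lemma~3.1]{Bekjan2018}, which (to the best of my knowledge) proceeds by essentially the same reduction.

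A few small points worth keeping in mind when writing this up. First, the transfer $E(\mathcal A)=E_1(\mathcal A)\odot E_2(\mathcal A)$ is used with isometric constants, which is exactly what \cite[Theorems~3,4]{Sukochev2016} provide; that is what makes the $(1+\varepsilon)$ sharp rather than merely some $C_E$. Second, in the regularization step you need $\mathbf 1\in E_1$, which does hold here because the trace is finite so every symmetric Banach function space on $(0,1]$ contains $\chi_{(0,1]}$; it is worth saying this explicitly. Third, the inequality $b = \bigl(a_0(a_0+\eta)^{-1}\bigr)b_0\le b_0$ in $\mathcal A^+$ (and hence $\|b\|_{E_2}\le\|b_0\|_{E_2}$) is correct because $0\le a_0(a_0+\eta)^{-1}\le\mathbf 1$ commutes with $b_0\ge 0$, so $a_0(a_0+\eta)^{-1}b_0=b_0^{1/2}\bigl(a_0(a_0+\eta)^{-1}\bigr)b_0^{1/2}\le b_0$. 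Finally, your remark about why one must restrict to $W^*(x)$ rather than using the abstract factorization of $E(\mathcal M)$ directly is exactly the right explanation: in the noncommutative setting the factors $x_1,x_2$ need not be positive or commute, and taking absolute values destroys the product, so the abelian reduction is what makes positivity come for free.
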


%
%

\subsection{Martingales and Hardy spaces}

We now  describe the general setup for  martingales in noncommutative symmetric spaces.
  Denote by $(\M_n)_{n \geq 1}$ an
increasing sequence of von Neumann subalgebras of ${\M}$
whose union  is weak*-dense in
$\M$. For $n\geq 1$, we assume that there exists a trace preserving conditional expectation ${\E}_n$
from ${\M}$ onto  ${\M}_n$.  It is well-known that if  $\T_n$  denotes the restriction of $\T$ on $\M_n$, then $\E_n$ extends to a contractive projection from $L_p(\M,\T)$ onto $L_p(\M_n, \T_n)$ for all $1\leq p \leq \infty$. More generally, if $E\in {\rm Int}
(L_1, L_\infty)$, then  for every $n\geq 1$,   $\E_n$ is bounded  from $E(\M,\T)$ onto $E(\M_n,\T_n)$.

\begin{defi}
A sequence $x = (x_n)_{n\geq 1}$ in $L_1(\M)$ is called a
noncommutative martingale with respect to $({\M}_n)_{n \geq
1}$ if $\mathcal{E}_n (x_{n+1}) = x_n$ for every $n \geq 1.$
\end{defi}
If, in addition, all $x_n$'s belong to $E(\M)$,  then  $x$ is called an $E(\M)$-martingale.
In this case, we set
\begin{equation*}\| x \|_{E} = \sup_{n \geq 1} \|
x_n \|_{E}.
\end{equation*}
If $\| x \|_{E} < \infty$, then $x$ is called
a bounded $E(\M)$-martingale.

Let $x = (x_n)_{n\geq1}$  be a noncommutative martingale with respect to
$(\M_n)_{n \geq 1}$.  Define $dx_n = x_n - x_{n-1}$ for $n
\geq 2$ and $d x_1=x_1$. The sequence $dx =
(dx_n)_{n\geq 1}$ is called the martingale difference sequence of $x$. A martingale
$x$ is called  a finite martingale if there exists $N$ such
that $dx_n = 0$ for all $n \geq N.$
In the sequel, for any operator $x\in L_1(\M)$,  we denote $x_n=\E_n(x)$ for $n\geq 1$.

Let us now  review the definitions of the conditioned square functions and  martingale conditioned Hardy spaces for the case of  noncommutative  symmetric spaces.
%
Let $x = (x_n)_{n \geq 1}$ be a martingale in $L_2(\M)$.
We set (with the convention that  $\E_0=\E_1$):
 \[
 s_{c,n} (x) = \Big ( \sum^n_{k = 1} \E_{k-1}|dx_k |^2 \Big )^{\frac 1 2}, \quad
 s_c (x) = \Big ( \sum^{\infty}_{k = 1} \E_{k-1}|dx_k|^2 \Big )^{\frac 1 2}\,.
 \]
 The operator $s_c(x)$ is  called  the column conditioned square function of $x$. In order to define the  conditioned Hardy spaces, we need to use Junge's  representation of conditioned spaces  (\cite{Ju}). The corresponding extension to noncommutative  symmetric spaces can be found in \cite{RW,RW2,Ran-Wu-Xu}, but since we will need some of the finer details in our proofs, we include a brief description.

 For every $n\geq 1$ and $1\leq p \leq \infty$,   define the  space $L_p^c(\M,\E_n)$ to be the completion of $\M$ with respect to the quasi-norm
 \[
 \big\|x\big\|_{L_p^c(\M,\E_n)} =\big\|\E_n(x^*x) \big\|_{p/2}^{\frac 1 2}.
 \]
 According to \cite[Proposition 2.8]{Ju}, there exists an isometric right $\M_n$-module map
  $$u_{n,p}: L_p^c(\M, \E_n) \to L_p(\M_n;\ell_2^c)$$
   such that
\begin{equation}\label{u}
u_{n,p}(x)^* u_{n,q}(y)=\E_n(x^*y) \otimes e_{1,1}
\end{equation}
for all $x\in L_p^c(\M,\E_n)$ and $y \in L_q^c(\M,\E_n)$ with $1/p +1/q \leq 1$.

Denote by $\mathcal{F}$ the collection of all finite sequences $(a_n)_{n\geq 1}$ in $\M$.
 For $1\leq p\leq \infty$,  define  the space $L_p^{\rm cond}(\M; \ell_2^c)$ to be   the completion  of $\mathcal{F}$  with respect to the norm
\begin{equation*}
\big\| (a_n)_{n\geq 1} \big\|_{L_p^{\rm cond}(\M; \ell_2^c)} = \Big\| \Big(\sum_{n\geq 1} \E_{n-1}|a_n|^2\Big)^{\frac 1 2}\Big\|_p.
\end{equation*}
The space $L_p^{\rm cond}(\M;\ell_2^c)$ can be isometrically embedded into an $L_p$-space associated to  a semifinite von  Neumann algebra  by means of the following map:
\[
U_p : L_p^{\rm cond}(\M; \ell_2^c) \to L_p(\M \overline{\otimes} B(\ell_2(\mathbb{N}^2)))\]
defined by setting
\[
U_p((a_n)_{n\geq 1}) = \sum_{n\geq 1} u_{n-1,p}(a_n) \otimes e_{n,1}.
\]
From \eqref{u}, it follows
   that if $(a_n)_{n\geq 1} \in L_p^{\rm cond}(\M;  \ell_2^c)$ and $(b_n)_{n\geq 1} \in L_q^{\rm cond}(\M;  \ell_2^c)$  for $1/p +1/q \leq 1$, then
\begin{equation*}
U_p( (a_n)_{n\geq 1})^* U_q((b_n)_{n\geq 1}) =\Big(\sum_{n\geq 1} \E_{n-1}(a_n^* b_n)\Big) \otimes e_{1,1} \otimes e_{1,1}.
\end{equation*}
In particular,  $\| (a_n)_{n\geq 1} \|_{L_p^{\rm cond}(\M; \ell_2^c)}=\|U_p( (a_n)_{n\geq 1})\|_p$ and
hence  $U_p$ is indeed an isometry.  An important fact here is that  $U_p$ is independent of $p$ and hence  we will simply write $U$ for $U_p$.

 Now, we  generalize  the notion of conditioned spaces to the setting of symmetric spaces.  Fix a symmetric Banach function space $E$. We consider  the  algebraic linear  map $U$ restricted to   the linear space $\mathcal{F}$  that takes  its values in $L_1(\M \overline{\otimes} B(\ell_2(\mathbb{N}^2))) \cap \big(\M \overline{\otimes} B(\ell_2(\mathbb{N}^2))\big)$.  For a given
   sequence $(a_n)_{n\geq 1} \in \mathcal{F}$, we set:

\begin{align*}
\big\| (a_n)_{n\geq 1} \big\|_{E^{\rm cond}(\M; \ell_2^c)}& = \Big\| \big(\sum_{n\geq 1} \E_{n-1}|a_n|^2\big)^{\frac 1 2}\Big\|_{E(\M)}\\
& =\big\| U( (a_n)_{n\geq 1})\big\|_{E(\M \overline{\otimes} B(\ell_2(\mathbb{N}^2)))}.
\end{align*}

This is well-defined and  induces   a norm  on the linear space $\mathcal{F}$.  We define   the Banach space $E^{\rm cond}(\M;  \ell_2^c)$ to be the completion of $\mathcal{F}$  with respect to the above  norm. Then
 $U$ extends to an isometry from  $E^{\rm cond}(\M;\ell_2^c)$ into $E(\M \overline{\otimes} B(\ell_2(\mathbb{N}^2)))$ which we will still denote by $U$.

 Let $x=(x_n)_{n\geq 1}$ be a finite martingale in $ E(\M)$.  Define
 \[
 \big\|x\big\|_{\h_E^c} =\big\| (dx_n)\big\|_{E^{\rm cond}(\M; \ell_2^c)}=\big\|s_c(x)\big\|_{E(\M)}.
 \]
 From the definition of $E^{\rm cond}(\M; \ell_2^c)$,  one can easily see that $\|\cdot\|_{\h_E^c}$  is a norm on the set of all finite martingales in $\M$. The conditioned Hardy space $\h_E^c(\M)$ is defined as the completion of the set all finite martingales under the norm $\|\cdot\|_{\h_E^c}$.

 If we denote by $\mathcal{D}_c : \h_E^c(\M) \to E^{\rm cond}(\M;\ell_2^c)$ the \lq\lq extension\rq\rq of the  natural map $x \mapsto (dx_n)_{n\geq 1}$, then its composition with $U$
 induces  the isometric embedding:
\[
U\mathcal{D}_c: \h_E^c(\M) \to E(\M \overline{\otimes} B(\ell_2(\mathbb{N}^2)))
\]
with the property that  if $x \in \h_E^c(\M)$ and $y\in \h_{E^\times}^c(\M)$ then
\begin{equation*}
\big(U\mathcal{D}_c(x)\big)^* U\mathcal{D}_c(y)=\Big( \sum_{n\geq 1}\E_{n-1}(dx_n^*dy_n)\Big) \otimes e_{1,1} \otimes e_{1,1}.
\end{equation*}
In particular, whenever $s_c(x)$ is  a well-defined operator, we have
\begin{equation*}
|U\mathcal{D}_c(x)|^2 =(s_c(x))^2 \otimes e_{1,1} \otimes e_{1,1}.
\end{equation*}




Similarly, we may define the  row conditioned square functions $s_r(x)$ and row conditioned Hardy spaces $\h_E^r(\M)$.

\subsection{Complex interpolation}\label{Complex}

Let $(X_0, X_1)$ be an interpolation couple of  Banach spaces; namely $X_j$, $j=0, 1$
are continuously embedded into a Hausdorff topological vector space $Y$. Let $\mathcal S$ (respectively, $\overline{\mathcal S}$) denote the open strip $\{z: 0<\textrm{Re}z<1\}$
(respectively, the closed strip $\{z : 0 \leq \textrm{Re}z\leq 1\}$) in the complex plane $\mathbb C$. Denote by $\partial_0=\{z\in \overline{\mathcal S}: \textrm{Re}z=0\}$, $\partial_1=\{z\in \overline{\mathcal S}: \textrm{Re}z=1\}$  the boundaries of $\overline{\mathcal S}$. Let $\mathcal F(X_0, X_1)$ be the space of bounded analytic functions $f : \mathcal S \to X_0 + X_1$ which
extend continuously to $\overline{\mathcal S}$  such that the functions $t \mapsto f (j + it)$ are bounded and continuous from $\mathbb{R}$ into $X_j$, $j = 0, 1$. We equip $\mathcal F(X_0, X_1)$ with the norm
$$\|f\|_{\mathcal F(X_0, X_1)}=\max\Big\{\sup_{z\in \partial_0}\|f(z)\|_{X_0}, \sup_{z\in \partial_1}\|f(z)\|_{X_1}\Big\}.$$
Then $\mathcal F(X_0, X_1)$ is a  Banach space. For $0\leq\theta\leq 1$, we define the complex interpolation space $[X_0, X_1]_\theta$  as the set of all  $x\in  X_0+X_1$ satisfying that $x=f(\theta)$ for some $f\in \mathcal F(X_0, X_1)$. The norm on $[X_0, X_1]_\theta$ is defined by setting
$$\|x\|_{[X_0, X_1]_\theta}=\inf\left\{\|f\|_{\mathcal F(X_0, X_1)}:f\in \mathcal F(X_0, X_1), f(\theta)=x\right\}.$$
It then follows that $[X_0, X_1]_\theta$ is a  Banach space for $0\leq \theta \leq 1$ (see \cite[Theorem 4.1.2]{BL1976}).

The following complex interpolation result might be well known to experts. We still include a short proof for the convenience of the reader.

\begin{lem}\label{complex-E-infty} Let $0<\theta<1$.  Assume that $E_1$, $E_2$ are fully symmetric Banach function spaces. We have
$$[E_1(\mathcal M), E_2(\mathcal M)]_\theta=E_1^{(\frac{1}{1-\theta})}(\mathcal M)\odot E_2^{(\frac{1}{\theta})}(\mathcal M).$$
\end{lem}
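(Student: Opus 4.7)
My plan is to obtain the identity by combining three ingredients that effectively reduce it to the classical Calder\'on--Lozanovskii theorem. First, at the scalar level, since $E_1$ and $E_2$ are fully symmetric, Lemma \ref{fully-symmetric-lem} places them in $\mathrm{Int}(L_1,L_\infty)$ and supplies the Fatou-type properties needed for the Calder\'on--Lozanovskii interpolation theorem for symmetric function spaces. That theorem delivers the scalar identity
$$[E_1, E_2]_\theta \;=\; E_1^{(1/(1-\theta))} \odot E_2^{(1/\theta)},$$
where the right-hand side is just a rewriting of the Calder\'on product $E_1^{1-\theta}E_2^{\theta}$ in terms of convexifications (note that $|f|^{1-\theta}\in E_1^{(1/(1-\theta))}$ iff $|f|\in E_1$, and similarly for the other factor).

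Next I would transfer this identity to the noncommutative setting using the well-known commutation of complex interpolation with the functor $E\mapsto E(\mathcal{M})$, namely
$$[E_1(\mathcal{M}), E_2(\mathcal{M})]_\theta \;=\; \bigl([E_1, E_2]_\theta\bigr)(\mathcal{M}),$$
a theorem going back to Kosaki in the $L_p$ case and extended to general symmetric operator spaces by Pisier--Xu and Dodds--Dodds--Pagter; see, e.g., \cite{PX3, Kalton-Sukochev}. Finally, the identification $(F_1\odot F_2)(\mathcal{M})=F_1(\mathcal{M})\odot F_2(\mathcal{M})$ for fully symmetric $F_1,F_2$, recorded in the excerpt from \cite{Sukochev2016, Bekjan2015}, applied with $F_i=E_i^{(1/(1-\theta))}$ and $F_i=E_i^{(1/\theta)}$, lets me distribute the $\mathcal{M}$ across the pointwise product and conclude.

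The main obstacle is making Step 2 genuine: ensuring the scalar Calder\'on--Lozanovskii identity actually lifts to the noncommutative interpolation. If one prefers a self-contained argument instead of invoking the black-box transfer, I would proceed as follows. For the containment $\supseteq$, given $x\in E_1^{(1/(1-\theta))}(\mathcal{M})\odot E_2^{(1/\theta)}(\mathcal{M})$ with polar decomposition $x=u|x|$, I would apply Lemma \ref{useful-factorization} to write $|x|=ab$ with $a$ positive and invertible in $E_1^{(1/(1-\theta))}(\mathcal{M})$ and $b$ positive in $E_2^{(1/\theta)}(\mathcal{M})$, and then define the analytic family
$$f(z) \;=\; u\, a^{(1-z)/(1-\theta)}\, b^{z/\theta},\qquad z\in\overline{\mathcal{S}},$$
so that the functional calculus gives $\|f(it)\|_{E_1(\mathcal{M})}\le \|a\|_{E_1^{(1/(1-\theta))}}$, $\|f(1+it)\|_{E_2(\mathcal{M})}\le \|b\|_{E_2^{(1/\theta)}}$, and $f(\theta)=u|x|=x$. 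The opposite inclusion is handled by duality: testing against $E_1^{\times}(\mathcal{M})$ and $E_2^{\times}(\mathcal{M})$, the three-line lemma combined with Lemma \ref{prod-1}(i) reduces the required bound to the scalar Calder\'on product. The subtle point throughout is the non-commutativity of $a$ and $b$, which forces careful use of the operator functional calculus and makes the invertibility of $a$ guaranteed by Lemma \ref{useful-factorization} indispensable for defining $a^{(1-z)/(1-\theta)}$ uniformly on the closed strip.
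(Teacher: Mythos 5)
Your main route --- scalar Calder\'on--Lozanovskii identity $[E_1,E_2]_\theta=E_1^{(1/(1-\theta))}\odot E_2^{(1/\theta)}$, lifting complex interpolation to the noncommutative setting, then distributing $\mathcal M$ through the pointwise product --- is exactly the paper's proof, which cites \cite[Theorem~4.6]{KaltonSMS} for the scalar step and \cite[Theorem~3.2]{DDP1992} for the lift. Two small cautions: Lemma~\ref{fully-symmetric-lem} only puts $E_i$ in $\mathrm{Int}(L_1,L_\infty)$ and says nothing about Fatou, so you should check the precise hypotheses of whichever Calder\'on--Lozanovskii statement you invoke; and in your alternative self-contained argument the boundary estimate should be $\|f(it)\|_{E_1(\mathcal M)}\le\|a\|_{E_1^{(1/(1-\theta))}}^{1/(1-\theta)}$ rather than $\|a\|_{E_1^{(1/(1-\theta))}}$, since by the definition of convexification $\|a^{1/(1-\theta)}\|_{E_1}=\|a\|_{E_1^{(1/(1-\theta))}}^{1/(1-\theta)}$ --- harmless after normalizing $\|a\|,\|b\|<1$, but as written the inequality is off.
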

\begin{proof}
The desired result follows from \cite[Theorem 4.6]{KaltonSMS} and \cite[Theorem 3.2]{DDP1992}. In fact, by \cite[Theorem 4.6]{KaltonSMS}, we have
$$[E_1,E_2]_\theta=E_1^{(\frac{1}{1-\theta})}\odot E_2^{(\frac{1}{\theta})}.$$
Now, applying \cite[Theorem 3.2]{DDP1992}, the preceding interpolation automatically lifts to the noncommutative setting.
\end{proof}

\section{Asymmetric vector valued spaces}\label{asymmetric-space}
In this section, we introduce two asymmetric vector valued operator spaces associated with symmetric Banach function spaces $E$: $E(\mathcal M;\ell_\infty^\theta)$ and $E(\mathcal M;\ell_1^\theta)$. Some elementary results about these spaces are presented for further use.

Let us begin with the definition of $E(\mathcal M;\ell_\infty^\theta)$. Let $0\leq \theta\leq 1$. Suppose that $E$ is a symmetric Banach function space such that $E\in \textrm{Int}(L_p, L_q)$ with $1\leq p\leq q\leq \infty$. We define $E(\mathcal M; \ell_\infty^\theta)$ to be the space of all sequences $x=(x_n)_{n\geq 1}$  in $E(\mathcal M)$ for which there exist $a\in E^{(\frac{1}{1-\theta})}(\mathcal M),\,\, b\in E^{(\frac{1}{\theta})}(\mathcal M)$ and $y=(y_n)_{n\geq1}\subset L_\infty(\mathcal M)$
such that
\begin{equation}\label{linfty-theta-factorization}
x_n=ay_n b,\quad n\geq 1.
\end{equation}
For $x\in E(\mathcal M; \ell_\infty^\theta)$, we define
$$\|x\|_{E(\mathcal M; \ell_\infty^\theta)}=\inf\Big\{\|a\|_{E^{(\frac{1}{1-\theta})}} \sup_{n\geq 1} \|y_n\|_\infty \|b\|_{E^{(\frac{1}{\theta})}}\Big\}$$
where the infimum is taken over all factorizations as above. We should mention that the case $\theta=\frac12$ reduces to the symmetric space $E(\mathcal M; \ell_\infty)$. In the case $\theta=0$ or $1$, the spaces $E(\mathcal M; \ell_\infty^0)$ and $E(\mathcal M; \ell_\infty^1)$ will be denoted by $E(\mathcal M; \ell_\infty^r)$ and $E(\mathcal M; \ell_\infty^c)$, respectively. It can be verified that
$$\|x\|_{E(\mathcal M; \ell_\infty^r)}=\inf\big\{\|A\|_E: A\geq 0,\,|x_n^*|^2\leq A^2,\,\forall\,n\geq 1\big\}.$$
Similarly,
$$\|x\|_{E(\mathcal M; \ell_\infty^c)}=\inf\big\{\|A\|_E: A\geq 0,\,|x_n|^2\leq A^2,\,\forall\, n\geq 1\big\}.$$

The following fact will be frequently used. We leave the proof as an exercise for the interested reader.

\begin{fact}\label{key-fact} Observe that if $\|x\|_{E(\mathcal M; \ell_\infty^\theta)}<1$, then there exist $a\in E^{(\frac{1}{1-\theta})}(\mathcal M),\,\, b\in E^{(\frac{1}{\theta})}(\mathcal M)$ and $y=(y_n)_{n\geq1}\subset L_\infty(\mathcal M)$
such that $x=ayb$ and
$$\max\Big\{\|a\|_{E^{(\frac{1}{1-\theta})}},\,\, \sup_{n\geq 1}\|y_n\|_\infty, \,\,\|b\|_{E^{(\frac{1}{\theta})}}\Big\}< 1.$$
\end{fact}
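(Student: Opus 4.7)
The plan is a direct rescaling argument. By the definition of the norm as an infimum, the hypothesis $\|x\|_{E(\mathcal M;\ell_\infty^\theta)}<1$ supplies at least one factorization $x_n = a y_n b$ with $a\in E^{(1/(1-\theta))}(\mathcal M)$, $b\in E^{(1/\theta)}(\mathcal M)$, and $(y_n)_{n\ge 1}\subset L_\infty(\mathcal M)$, for which
$$\alpha\beta\gamma := \|a\|_{E^{(1/(1-\theta))}}\cdot\sup_{n\ge 1}\|y_n\|_\infty\cdot\|b\|_{E^{(1/\theta)}} \;<\; 1.$$
If any of $\alpha,\beta,\gamma$ equals zero, then $x$ is the zero sequence and the trivial factorization with $a=b=0$ and $y_n=0$ does the job. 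Otherwise, the goal is to redistribute positive scalars among the three factors so as to equalize them while preserving their product.

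To do this, set $t = (\alpha\beta\gamma)^{1/3}$, so that $t<1$, and put
$$\widetilde a = \tfrac{t}{\alpha}\,a,\qquad \widetilde y_n = \tfrac{t}{\beta}\,y_n,\qquad \widetilde b = \tfrac{t}{\gamma}\,b.$$
The three scalars multiply to $t^{3}/(\alpha\beta\gamma)=1$, so $x_n=\widetilde a\,\widetilde y_n\,\widetilde b$ still holds for every $n$. Since the three quantities appearing in the definition of the norm are each positively homogeneous of degree one, we obtain $\|\widetilde a\|_{E^{(1/(1-\theta))}} = \sup_n\|\widetilde y_n\|_\infty = \|\widetilde b\|_{E^{(1/\theta)}} = t<1$, which is exactly the required conclusion. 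There is no genuine obstacle here: the entire content is the observation that a strict bound on a product of three nonnegative quantities can always be promoted to a simultaneous strict bound on each of them by scalar redistribution, and this is why the infimum in the definition of $\|\cdot\|_{E(\mathcal M;\ell_\infty^\theta)}$ can equivalently be taken over factorizations for which each factor is individually controlled.
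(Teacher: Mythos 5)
Your rescaling argument is correct and is exactly the intended elementary argument (the paper leaves this Fact as an exercise). Choosing a factorization with $\alpha\beta\gamma<1$ from the infimum, setting $t=(\alpha\beta\gamma)^{1/3}$, and redistributing the scalar so that each factor has norm $t<1$ is precisely the point; your handling of the degenerate case where some factor is zero is also fine.
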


It is easy to check that $\|\cdot\|_{E(\mathcal M; \ell_\infty^\theta)}$ satisfies the positive definiteness and the homogeneity. The lemma below shows that  $\|\cdot\|_{E(\mathcal M;\ell_\infty^\theta)}$ is a quasi norm.

\begin{lem}[{\cite[Page 59]{RWZ2021}}]\label{quasi-norm} Let $0\leq\theta\leq 1$ and let $E$ be a symmetric Banach function space.
For every $x=(x_n)_{n\geq 1}$ and $y=(y_n)_{n\geq 1}$ in $E(\M;\ell_\infty^\theta)$, we have
\[\|x+y\|_{E(\mathcal{M};\ell_\infty^\theta)} \leq 2\big( \|x\|_{E(\mathcal{M};\ell_\infty^\theta)} + \|y\|_{E(\mathcal{M};\ell_\infty^\theta)}\big).
\]
\end{lem}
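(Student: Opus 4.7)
The plan is to produce a single factorization of $x+y$ out of the individual factorizations of $x$ and $y$. Fix $\varepsilon > 0$ and, using Fact~\ref{key-fact}, choose factorizations
\[
x_n = a_1 w_n^{(1)} b_1,\qquad y_n = a_2 w_n^{(2)} b_2,
\]
with $\|a_i\|_{E^{(1/(1-\theta))}} \sup_n \|w_n^{(i)}\|_\infty \|b_i\|_{E^{(1/\theta)}} \leq (1+\varepsilon)\|x_i\|_{E(\mathcal M;\ell_\infty^\theta)}$, where $x_1 := x$, $x_2 := y$. The two-parameter homogeneity $(a,w,b) \mapsto (\lambda a, (\lambda\mu)^{-1} w, \mu b)$ of the factorization allows one to rescale so that all three factor norms in the $i$-th decomposition equal $c_i := ((1+\varepsilon)\|x_i\|)^{1/3}$.

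Next, I would build a joint factorization $x_n + y_n = A z_n B$. Set $A := (a_1 a_1^* + a_2 a_2^*)^{1/2}$ and $B := (b_1^* b_1 + b_2^* b_2)^{1/2}$. The inequalities $a_i a_i^* \leq A^2$ and $b_i^* b_i \leq B^2$, combined with the Douglas majorization lemma (if needed, replace $A, B$ by $A+\delta, B+\delta$ and let $\delta \to 0^+$, or invoke Lemma~\ref{useful-factorization} to get invertible factors), produce contractions $u_i, v_i \in \mathcal M$ with $a_i = A u_i$, $b_i = v_i B$, and $u_1 u_1^* + u_2 u_2^* \leq 1$, $v_1^* v_1 + v_2^* v_2 \leq 1$. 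Setting $z_n := u_1 w_n^{(1)} v_1 + u_2 w_n^{(2)} v_2$ yields $x_n + y_n = A z_n B$.

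The three factors are then estimated as follows. Using the identity $\|T\|_{E^{(r)}}^2 = \|T^* T\|_{E^{(r/2)}}$ and the triangle inequality in the convexification $E^{(1/(2(1-\theta)))}$ (which is Banach, possibly after renorming, thanks to \eqref{eq: int}),
\[
\|A\|_{E^{(1/(1-\theta))}}^2 = \|a_1 a_1^* + a_2 a_2^*\|_{E^{(1/(2(1-\theta)))}} \leq c_1^2 + c_2^2,
\]
and analogously $\|B\|_{E^{(1/\theta)}} \leq (c_1^2+c_2^2)^{1/2}$. Since the $u_i, v_i$ are contractions, $\sup_n \|z_n\|_\infty \leq c_1 + c_2$. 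Multiplying the three bounds and applying the elementary inequality $c_1^2 c_2 + c_1 c_2^2 \leq c_1^3 + c_2^3$ (AM--GM) gives
\[
\|x+y\|_{E(\mathcal M;\ell_\infty^\theta)} \leq (c_1^2+c_2^2)(c_1+c_2) \leq 2(c_1^3+c_2^3) = 2(1+\varepsilon)(\|x\|+\|y\|),
\]
and letting $\varepsilon \to 0$ concludes.

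The main obstacle is controlling the convexification triangle inequality with constant~$1$: by \eqref{eq: int}, $E^{(1/(2(1-\theta)))}$ lies in $\mathrm{Int}(L_{p/(2(1-\theta))}, L_{q/(2(1-\theta))})$ and is cleanly Banach when $p \geq 2(1-\theta)$, but delicate when $p$ is near $1$ and $\theta$ is near $0$ (and symmetrically near $1$ for the $B$ estimate). In that borderline regime, one can either absorb a uniform quasi-Banach constant or replace the convexification estimate by the equivalent column/row identity $\|A\|_{E^{(1/(1-\theta))}} = \|(a_1,a_2)\|_{E^{(1/(1-\theta))}(\mathcal M;\ell_2^r)}$; the rest of the skeleton (joint factorization, the $\ell^\infty$ bound on $z_n$, AM--GM) is unaffected.
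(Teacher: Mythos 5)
Your core argument is essentially the paper's proof of Lemma \ref{norm} (the case where one actually gets a norm), and it inherits that proof's restriction: the key estimate $\|a_1a_1^*+a_2a_2^*\|_{E^{(1/(2(1-\theta)))}}\le\|a_1a_1^*\|+\|a_2a_2^*\|$ (and its analogue for $B$) needs the convexifications $E^{(\frac{1}{2(1-\theta)})}$ and $E^{(\frac{1}{2\theta})}$ to satisfy the triangle inequality, which is exactly the extra hypothesis $p\ge\max\{2(1-\theta),2\theta\}$ of Lemma \ref{norm} --- and in that regime Lemma \ref{norm} already gives constant $1$, so your argument proves nothing new there. Lemma \ref{quasi-norm} makes no such assumption (that general regime is its whole point; the paper does not reprove it but cites \cite[p.~59]{RWZ2021}), and in the borderline cases the estimate genuinely fails: for $E=L_1$, $\theta=0$ one has $E^{(1/2)}=L_{1/2}$, which is only quasi-normed. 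Neither of your proposed fixes recovers the stated constant $2$: absorbing the quasi-constant of the convexification yields a final constant strictly larger than $2$ (and that quasi-constant itself needs an argument), while the ``column/row identity'' $\|A\|_{E^{(1/(1-\theta))}}=\|(a_1,a_2)\|_{E^{(1/(1-\theta))}(\mathcal M;\ell_2^r)}$ is a tautology --- what you would need is the triangle inequality in the row space, $\|A\|\le\|a_1\|+\|a_2\|$ (which does hold, since $\tfrac1{1-\theta}\ge1$ makes $E^{(1/(1-\theta))}$ a Banach space), but combined with your ``equal thirds'' normalization this gives $(c_1+c_2)^3\le4(c_1^3+c_2^3)$, i.e.\ constant $4$; even bounding the middle factor by $\max(c_1,c_2)$ via $\sum_i u_iu_i^*\le\mathbf 1$, $\sum_i v_i^*v_i\le\mathbf 1$, the product $(c_1+c_2)^2\max(c_1,c_2)$ exceeds $2(c_1^3+c_2^3)$, e.g.\ at $c_1=1$, $c_2=3/4$.

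The repair is a different normalization, namely the one used in Lemma \ref{norm}: keep the middle sequences contractive, $\sup_n\|w_n^{(i)}\|_\infty\le1$, and arrange $\max\{\|a_i\|_{E^{(1/(1-\theta))}},\|b_i\|_{E^{(1/\theta)}}\}\le(\|x_i\|_{E(\mathcal M;\ell_\infty^\theta)}+\varepsilon)^{1/2}$. Then, with your $A$, $B$, $u_i$, $v_i$, estimate $\|A\|_{E^{(1/(1-\theta))}}\le\|a_1\|+\|a_2\|$ and $\|B\|_{E^{(1/\theta)}}\le\|b_1\|+\|b_2\|$ using the triangle inequality in $E^{(1/(1-\theta))}(\mathcal M;\ell_2^r)$ and $E^{(1/\theta)}(\mathcal M;\ell_2^c)$ (these convexification exponents are always $\ge1$, so no convexity assumption on $E$ is needed), and bound the middle term by $1$ through the $2\times2$ matrix factorization together with $\sum_i u_iu_i^*\le\mathbf 1$ and $\sum_i v_i^*v_i\le\mathbf 1$. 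The elementary inequality $(\sqrt s+\sqrt t)^2\le2(s+t)$ then gives $\|x+y\|_{E(\mathcal M;\ell_\infty^\theta)}\le2(\|x\|_{E(\mathcal M;\ell_\infty^\theta)}+\|y\|_{E(\mathcal M;\ell_\infty^\theta)}+2\varepsilon)$, and letting $\varepsilon\to0$ yields exactly the constant $2$ for all $1\le p\le q\le\infty$ and all $0\le\theta\le1$.
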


Though $\|\cdot\|_{E(\mathcal M; \ell_\infty^\theta)}$ is just a quasi norm in general, the next lemma shows that it is equivalent to a norm under certain assumption. In the case $\theta=0,1$ and $E=L_p$, the below result is just \cite[Lemma 3.5]{Mu2003}.

\begin{lem}\label{norm}
Let $0\leq\theta\leq 1$ and let $E$ be a symmetric  Banach function space with Fatou norm.  If $E$ is $\max\{2\theta, 2(1-\theta)\}$-convex,  then $\|\cdot\|_{E(\mathcal M; \ell_\infty^\theta)}$ is equivalent to a norm. In other words, $E(\mathcal M; \ell_\infty^\theta)$ can be renormed.
\end{lem}
\begin{proof}
Note that $E$ has  Fatou norm, and is $\max\{2\theta, 2(1-\theta)\}$-convex. According to Proposition \ref{DDS2014}, $\|\cdot\|_{E^{(\frac{1}{2(1-\theta)})}}$ is a equivalent to a norm which we denote by $|||\cdot|||_{E^{(\frac{1}{2(1-\theta)})}}$. Similarly, $||\cdot||_{E^{(\frac{1}{2\theta})}}$ is equivalent to a norm $|||\cdot|||_{E^{(\frac{1}{2\theta})}}$. Therefore, for each $x=(x_n)_{n\geq1}\in E(\mathcal M; \ell_\infty^\theta)$, $\|x\|_{E(\mathcal M; \ell_\infty^\theta)}$ is equivalent to
$$|||x|||_{E(\mathcal M; \ell_\infty^\theta)}=\inf\Big\{|||a|||_{E^{(\frac{1}{1-\theta})}} \sup_{n\geq 1} \|y_n\|_\infty |||b|||_{E^{(\frac{1}{\theta})}}\Big\},$$
where the infimum is taken over all factorizations $x=ayb$ with $a\in E^{(\frac{1}{1-\theta})}(\mathcal M),\,\, b\in E^{(\frac{1}{\theta})}(\mathcal M)$ and $y=(y_n)_{n\geq1}\subset L_\infty(\mathcal M)$.

In  the following, we show that $|||\cdot|||_{E(\mathcal M; \ell_\infty^\theta)}$ is a norm. We verify only the triangle inequality here.  Take $x^{(1)},\, x^{(2)}\in E(\mathcal M;\ell_\infty^\theta)$.  According to the homogeneity of symmetric spaces, for any $\varepsilon>0$, there exist factorizations
$x^{(1)}=a_1 y^{(1)} b_1$ and $x^{(2)}=a_2 y^{(2)} b_2$
such that $\sup_{n\geq 1}\|y_n^{(1)}\|_\infty\leq1$, $\sup_{n\geq 1}\|y_n^{(2)}\|_\infty\leq 1$, and
\begin{equation}\label{equation-1}
\begin{split}
\max\Big\{|||a_1|||_{E^{(\frac{1}{1-\theta})}},\,\, \,\,|||b_1|||_{E^{(\frac{1}{\theta})}}\Big\}\leq \Big(|||x^{(1)}|||_{E(\mathcal M;\ell_\infty^\theta)}+\varepsilon\Big)^{
\frac 1 2},\\
\max\Big\{|||a_2|||_{E^{(\frac{1}{1-\theta})}}, \,\,|||b_2|||_{E^{(\frac{1}{\theta})}}\Big\}\leq \Big(|||x^{(2)}|||_{E(\mathcal M;\ell_\infty^\theta)}+\varepsilon\Big)^{\frac 1 2}.
\end{split}
\end{equation}
Then we define
$$\alpha=\big(a_1a_1^*+a_2a_2^*+\varepsilon\textbf{1}\big)^{\frac 1 2},\quad \beta=\big(b_1^*b_1+b_2^*b_2+\varepsilon\textbf{1}\big)^{\frac 1 2}.$$
Since for $k\in\{1,2\}$ we have $a_ka_k^* \leq \alpha^2$ and $b_k^*b_k\leq \beta^2$, there exist contractions $u_1$, $u_2$, $w_1$ and $w_2$ such that
$a_1=\alpha u_1$, $a_2=\alpha u_2$, $b_1=w_1 \beta$ and $b_2=w_2\beta$. It is clear that
$$x^{(1)}+ x^{(2)}=\alpha\big(u_1y^{(1)}w_1+u_2y^{(2)}w_2\big)\beta.$$
Note that $|||\cdot|||_{E^{(\frac{1}{2(1-\theta)})}}$ is a norm. It follows that
\begin{align*}
|||\alpha|||_{E^{(\frac{1}{1-\theta})}}&=|||a_1a_1^*+a_2a_2^*+\varepsilon \textbf{1}|||_{E^{(\frac{1}{2(1-\theta)})}}^{\frac 1 2}\\
&\leq \Big(|||a_1a_1^*|||_{E^{(\frac{1}{2(1-\theta)})}}+||| a_2a_2^*|||_{E^{(\frac{1}{2(1-\theta)})}}+\varepsilon\Big)^{\frac 1 2}\\
&\leq \Big(|||x^{(1)}|||_{E(\mathcal M;\ell_\infty^\theta)}+|||x^{(2)}|||_{E(\mathcal M;\ell_\infty^\theta)}+2\varepsilon\Big)^{\frac 1 2}.
\end{align*}
Similarly, we have
$$|||\beta|||_{E^{(\frac{1}{\theta})}}\leq \Big(|||x^{(1)}|||_{E(\mathcal M;\ell_\infty^\theta)}+|||x^{(2)}|||_{E(\mathcal M;\ell_\infty^\theta)}+2\varepsilon\Big)^{\frac 1 2}.$$
Also, it is clear that
$$\big(u_1y^{(1)}w_1+u_2y^{(2)}w_2\big)\otimes e_{1,1}= \left[\begin{array}{cc}  u_1 & u_2  \\ 0& 0\end{array}\right]\left[\begin{array}{cc}  y^{(1)} & 0  \\ 0& y^{(2)}\end{array}\right] \left[\begin{array}{cc}  w_1 & 0 \\ w_2 & 0\end{array}\right].$$
Note that $\alpha$, $\beta$ are invertible and
$$u_1u_1^*+u_2u_2^*=\alpha^{-1}(a_1a_1^*+a_2a_2^*)\alpha^{-1}\leq \textbf{1},$$
$$w_1^*w_1+w_2^*w_2=\beta^{-1}(b_1^*b_1+b_2^*b_2)\beta^{-1}\leq \textbf{1}.$$
Moreover, since $\sup_{n\geq 1}\|y_n^{(1)}\|_\infty\leq1$, $\sup_{n\geq 1}\|y_n^{(2)}\|_\infty\leq 1$, we have
$$\Big\|u_1y^{(1)}w_1+u_2y^{(2)}w_2\Big\|_{L_\infty(\mathcal M\overline\otimes \ell_\infty)}\leq 1.$$
Therefore, we have found a factorization $$x^{(1)}+x^{(2)}=\alpha\big(u_1y^{(1)}w_1+u_2y^{(2)}w_2\big)\beta$$ with
\begin{align*}
&|||\alpha|||_{E^{(\frac{1}{1-\theta})}} \Big\|u_1y^{(1)}w_1+u_2y^{(2)}w_2\Big\|_{L_\infty(\mathcal M\overline\otimes \ell_\infty)} |||\beta|||_{E^{(\frac{1}{\theta})}} \\
&\qquad\leq |||x^{(1)}|||_{E(\mathcal M;\ell_\infty^\theta)}+|||x^{(2)}|||_{E(\mathcal M;\ell_\infty^\theta)}+2\varepsilon.
\end{align*}
Letting $\varepsilon\to 0$, the desired assertion follows.
\end{proof}

\begin{prop}\label{key-prop-banach}
Let $0\leq\theta\leq 1$ and let $E$ be a symmetric  Banach function space with Fatou norm.  If $E$ is $\max\{2\theta, 2(1-\theta)\}$-convex,  then $E(\M;\ell_\infty^\theta)$ can be renormed to be a Banach space.
\end{prop}
\begin{proof}
In fact, we will prove that $E(\M;\ell_\infty^\theta)$ is a Banach space with respect to the norm $|||\cdot|||_{E(\M;\ell_\infty^\theta)}$. Without causing any confusion, we replace $|||\cdot|||$ by $\|\cdot\|$ for simplicity below.

Let $x^{(1)},\,x^{(2)},\cdots,x^{(k)},\cdots$ be a Cauchy sequence in $E(\mathcal M;\ell_\infty^\theta)$. We may find a subsequence $x^{(n_1)},\,x^{(n_2)},\cdots\,x^{(n_k)},\cdots$ such that $$\|x^{(n_{1})}\|_{E(\mathcal M;\ell_\infty^\theta)}< 4^{-1}$$
and
$$\|x^{(n_{k})}-x^{(n_{k-1})}\|_{E(\mathcal M;\ell_\infty^\theta)}< 4^{-k},\quad k\geq 2.$$
It is well known that, by the triangle inequality proved in Lemma \ref{norm}, to obtain the completeness, it suffices to show that
$\sum_{k}x^{(n_{k+1})}-x^{(n_{k})}$ belongs to $E(\mathcal M;\ell_\infty^\theta)$. Set
$$\mathbf{x}^{(1)}=x^{(n_{1})}$$
and
$$\mathbf{x}^{(k)}=x^{(n_{k})}-x^{(n_{k-1})},\quad k\geq 2.$$
Then, by Fact \ref{key-fact} there exist $a_k\in E^{(\frac{1}{1-\theta})}(\M)$, $b_k\in E^{(\frac{1}{\theta})}(\M)$ and $y^{(k)}\in L_\infty(\mathcal M\overline{\otimes}\ell_\infty)$ such that $\mathbf{x}^{(k)}=4^{-k} a_k y^{(k)} b_k$ with
$$\max\Big\{\|a_k\|_{E^{(\frac{1}{1-\theta})}},\,\, \|y^{(k)}\|_{L_\infty(\mathcal M\overline{\otimes}\ell_\infty)},\,\,\|b_k\|_{E^{(\frac{1}{\theta})}}\Big\}<1. $$
We define for $\varepsilon>0$
$$\mathsf S_r(\mathsf a)=\Big(\sum_{k=1}^\infty 2^{-k}a_{k}a_k^*+\varepsilon\textbf{1} \Big)^{\frac 1 2},\quad \mathsf S_c(\mathsf b)=\Big(\sum_{k=1}^\infty2^{-k}b_k^*b_k+\varepsilon\textbf{1} \Big)^{\frac 1 2},$$
where the series converge in $E^{(\frac{1}{1-\theta})}(\mathcal M)$ and $E^{(\frac{1}{\theta})}(\mathcal M)$, respectively. In fact, it follows from Lemma \ref{norm} that
$$\|\mathsf S_r(\mathsf a)\|_{E^{(\frac{1}{1-\theta})}}\leq \Big(\varepsilon+\sum_{k=1}^\infty 2^{-k}\big\|a_k\big\|_{E^{(\frac{1}{1-\theta})}}^2\Big)^{\frac 1 2}\leq (1+\varepsilon)^{\frac 1 2}.$$
 A similar estimate applies to $\mathsf S_c(\mathsf b)$:
$$\|\mathsf S_c(\mathsf b)\|_{E^{(\frac{1}{\theta})}}\leq (1+\varepsilon)^{\frac 1 2}.$$
Clearly, there exist contractions $\alpha_k$, $\beta_k$ such that
$$2^{-k/2} a_k=\mathsf S_r(\mathsf a)\alpha_k,\quad 2^{-k/2} b_k=\beta_k\mathsf S_c(\mathsf b).$$
We may write
$$\sum_{k=1}^\infty \mathbf{x}^{(k)} = \mathsf S_r(\mathsf a) \Big(\sum_{k=1}^\infty2^{-k} \alpha_{k}y^{(k)}\beta_{k}\Big)\mathsf S_c(\mathsf b).$$
For the middle term of the above expression, we have
\begin{align*}
\Big\|\sum_{k=1}^\infty2^{-k} \alpha_{k}y^{(k)}\beta_{k}\Big\|_{L_\infty(\mathcal M\overline{\otimes} \ell_\infty)}&\leq \sum_{k=1}^\infty\Big\|2^{-k} \alpha_{k}y^{(k)}\beta_{k}\Big\|_{L_\infty(\mathcal M\overline{\otimes} \ell_\infty)}\leq 1.
\end{align*}
Therefore,
$$\Big\|\sum_{k=1}^\infty \mathbf{x}^{(k)}\Big\|_{E(\mathcal M;\ell_\infty^\theta)}\leq \|\mathsf S_r(\mathsf a)\|_{E^{(\frac{1}{1-\theta})}} \Big\|\sum_{k=1}^\infty2^{-k} \alpha_{k}y^{(k)}\beta_{k}\Big\|_{L_\infty(\mathcal M\overline{\otimes} \ell_\infty)} \|\mathsf S_c(\mathsf b)\|_{E^{(\frac{1}{\theta})}}\leq 1+\varepsilon.$$
The proof is complete.
\end{proof}


We now turn our attention to the space $E(\mathcal M;\ell_1^\theta)$. Let $0\leq \theta\leq 1$ and let $E$ be a symmetric Banach function space such that $E^\times$ is $\max\{2\theta, 2(1-\theta)\}$-convex. We define
\begin{equation}\label{p-theta}
E_{1-\theta}:=\Big(\big[(E^\times)^{(\frac{1}{2(1-\theta)})}\big]^{\times}\Big)^{(2)},\quad E_{\theta}:=\Big(\big[(E^\times)^{(\frac{1}{2\theta})}\big]^{\times}\Big)^{(2)}.
\end{equation}
Then the space $E(\mathcal M; \ell_1^\theta)$ is defined to be the set of all sequences $x=(x_n)_{n\geq 1}$  in $E(\mathcal M)$ which can be decomposed as
\begin{equation}\label{l1-theta-factorization}
x_n=\sum_{k\geq 1} v_{n,k} w_{n,k},\quad n\geq 1
\end{equation}
for two families $v_{n,k}\in E_{1-\theta}(\mathcal M)$ and $w_{n,k}\in E_{\theta}(\mathcal M)$ satisfying
$$\Big(\sum_{n,k\geq 1}v_{n,k}v_{n,k}^*\Big)^{\frac 1 2}\in E_{1-\theta}(\mathcal M)\quad\mbox{and}\quad \Big(\sum_{n,k\geq 1}w_{n,k}^*w_{n,k}\Big)^{\frac 1 2}\in E_{\theta}(\mathcal M)$$
where the series converge in norm. For $x\in E(\mathcal M; \ell_1^\theta)$,  define
$$\|x\|_{E(\mathcal M; \ell_1^\theta)}=\inf\left\{\Big\|\Big(\sum_{n,k\geq 1}v_{n,k}v_{n,k}^*\Big)^{\frac 1 2}\Big\|_{E_{1-\theta}}\cdot \Big\|\Big(\sum_{n,k\geq 1}w_{n,k}^*w_{n,k}\Big)^{\frac 1 2}\Big\|_{E_{\theta}}\right\}$$
where the infimum is taken over all factorizations as above.
Obviously, the case $\theta=\frac 1 2$ reduces to the symmetric space $E(\mathcal M; \ell_1)$. In the case $\theta=0$ or $1$, the spaces $E(\mathcal M; \ell_1^0)$ and $E(\mathcal M; \ell_1^1)$ will be denoted by $E(\mathcal M; \ell_1^r)$ and $E(\mathcal M; \ell_1^c)$, respectively.

\begin{fact}\label{key-fact-2} If $\|x\|_{E(\mathcal M; \ell_1^\theta)}<1$, then there exist two families $v_{n,k}\in E_{1-\theta}(\mathcal M)$ and $w_{n,k}\in E_{\theta}(\mathcal M)$ such that $x_n=\sum_{k\geq 1} v_{n,k} w_{n,k} $ for $n\geq 1$ and
$$\max\left\{\Big\|\Big(\sum_{n,k\geq 1}v_{n,k}v_{n,k}^*\Big)^{\frac 1 2}\Big\|_{E_{1-\theta}},\,\, \Big\|\Big(\sum_{n,k\geq 1}w_{n,k}^*w_{n,k}\Big)^{\frac 1 2}\Big\|_{E_{\theta}}\right\}< 1.$$
\end{fact}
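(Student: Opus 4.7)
The plan is to adapt the rescaling trick that underlies Fact \ref{key-fact}, exploiting the invariance of the bilinear factorization $x_n = \sum_{k}v_{n,k}w_{n,k}$ under the substitution $v_{n,k} \mapsto \lambda v_{n,k}$, $w_{n,k} \mapsto \lambda^{-1}w_{n,k}$. Unlike in Fact \ref{key-fact}, where two scaling parameters are needed to equalize three factors, here a single parameter suffices to balance two factors.

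First, since $\|x\|_{E(\mathcal M;\ell_1^\theta)} < 1$, the definition of the norm as an infimum allows me to fix $\varepsilon>0$ with $\|x\|_{E(\mathcal M;\ell_1^\theta)} + \varepsilon < 1$ and to pick a representation $x_n = \sum_{k\geq 1} v_{n,k} w_{n,k}$ in the sense of \eqref{l1-theta-factorization} such that
\[
A := \Big\|\Big(\sum_{n,k}v_{n,k}v_{n,k}^*\Big)^{\frac{1}{2}}\Big\|_{E_{1-\theta}}, \qquad B := \Big\|\Big(\sum_{n,k}w_{n,k}^*w_{n,k}\Big)^{\frac{1}{2}}\Big\|_{E_{\theta}}
\]
satisfy $AB \leq \|x\|_{E(\mathcal M;\ell_1^\theta)} + \varepsilon < 1$. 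The goal is now to modify the representation so that $A$ and $B$ are \emph{individually} strictly less than $1$.

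Next, for $\lambda>0$ set $\tilde v_{n,k} := \lambda v_{n,k}$ and $\tilde w_{n,k} := \lambda^{-1}w_{n,k}$. Then $\tilde v_{n,k}\tilde w_{n,k} = v_{n,k}w_{n,k}$, so $x_n = \sum_k \tilde v_{n,k}\tilde w_{n,k}$ is still a valid factorization of the required form. By the positive homogeneity of the norms on $E_{1-\theta}(\mathcal M)$ and $E_\theta(\mathcal M)$, the corresponding two quantities become $\lambda A$ and $B/\lambda$. It therefore suffices to choose $\lambda$ with $\lambda A < 1$ and $B/\lambda < 1$, i.e. $B < \lambda < 1/A$; since $AB < 1$ this interval is nonempty, and one may take for instance $\lambda := \sqrt{B/A}$, which yields $\lambda A = B/\lambda = \sqrt{AB} < 1$. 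Degenerate cases are immediate: if $A=0$ then all $v_{n,k}$ vanish (hence $x\equiv 0$) and analogously for $B=0$, in which situation the zero factorization trivially works.

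I do not foresee any substantive obstacle: the argument is a one-variable homogeneity manipulation, entirely parallel to the balancing trick already used for Fact \ref{key-fact}. The only minor points requiring care are the verification of homogeneity of $\|\cdot\|_{E_{1-\theta}}$ and $\|\cdot\|_{E_\theta}$ (automatic from the $(2)$-convexification built into the definition \eqref{p-theta}) and the trivial edge cases when one of $A$, $B$ vanishes.
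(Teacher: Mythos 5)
Your rescaling argument is correct and is exactly the intended one: the paper states this Fact without proof (just as Fact \ref{key-fact} is left as an exercise), and the standard balancing substitution $v_{n,k}\mapsto\lambda v_{n,k}$, $w_{n,k}\mapsto\lambda^{-1}w_{n,k}$ with $\lambda=\sqrt{B/A}$, together with the homogeneity of the norms on $E_{1-\theta}(\mathcal M)$ and $E_\theta(\mathcal M)$ and the trivial degenerate cases, is all that is needed. Nothing is missing.
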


\begin{lem}\label{norm-2}
Let $0\leq \theta\leq 1$ and  let $E$ be a symmetric Banach function space such that $E^\times$ is $\max\{2\theta, 2(1-\theta)\}$-convex. Then $\|\cdot\|_{E(\mathcal M;\ell_1^\theta)}$ is equivalent to a norm.
\end{lem}
\begin{proof} Note that $E^\times$ has  Fatou property, and is $\max\{2\theta, 2(1-\theta)\}$-convex. According to Proposition \ref{DDS2014}, $(E^\times)^{(\frac{1}{2(1-\theta)})}$ and $(E^\times)^{(\frac{1}{2(\theta)})}$ can be renormed to be Banach spaces.
 Similar to Lemma \ref{norm}, $\|x\|_{E(\mathcal M;\ell_1^\theta)}$ is equivalent to
$$|||x|||_{E(\mathcal M; \ell_1^\theta)}=\inf\left\{|||\Big(\sum_{n,k\geq 1}v_{n,k}v_{n,k}^*\Big)^{\frac 1 2}|||_{E_{1-\theta}}\cdot |||\Big(\sum_{n,k\geq 1}w_{n,k}^*w_{n,k}\Big)^{\frac 1 2}|||_{E_{\theta}}\right\},$$
where the infimum is taken over all factorizations as in \eqref{l1-theta-factorization}, $|||\cdot|||_{E_{1-\theta}}$, $|||\cdot|||_{E_{\theta}}$ denote the norms that are equivalent to  $\|\cdot\|_{E_{1-\theta}}$, $\|\cdot\|_{E_{\theta}}$ (these notions are referred to \eqref{p-theta}), respectively.

We now show that $|||x|||_{E(\mathcal M; \ell_1^\theta)}$ is norm. For simplicity, in what follows, we write $\|\cdot\|$ instead of $|||\cdot|||$. The positive definiteness and the homogeneity can be checked easily. We only provide the proof of the triangle inequality. Take $x=(x_n)_{n\geq 1}$ and $y=(y_n)_{n\geq 1}$ in $E(\mathcal M;\ell_1^\theta)$. By Fact \ref{key-fact-2} and the homogeneity of symmetric spaces, for any $\varepsilon>0$, there exist  two families $v_{n,k}\in E_{1-\theta}(\mathcal M)$ and $w_{n,k}\in E_{\theta}(\mathcal M)$ such that for every $n\geq 1$, $x_n=\sum_{k\geq 1} v_{n,k} w_{n,k} $, and
$$\max\left\{\Big\|\Big(\sum_{n,k\geq 1}v_{n,k}v_{n,k}^*\Big)^{\frac 1 2}\Big\|_{E_{1-\theta}},\,\, \Big\|\Big(\sum_{n,k\geq 1}w_{n,k}^*w_{n,k}\Big)^{\frac 1 2}\Big\|_{E_{\theta}}\right\}\leq \Big(\|x\|_{E(\mathcal M;\ell_1^\theta)}+\varepsilon\Big)^{\frac 1 2}.$$
Similarly, there exist  two families $a_{n,k}\in E_{1-\theta}(\mathcal M)$ and $b_{n,k}\in E_{\theta}(\mathcal M)$ such that for every $n\geq 1$, $y_n=\sum_{k\geq 1} a_{n,k} b_{n,k}$ and
$$\max\left\{\Big\|\Big(\sum_{n,k\geq 1}a_{n,k}a_{n,k}^*\Big)^{\frac 1 2}\Big\|_{E_{1-\theta}},\,\, \Big\|\Big(\sum_{n,k\geq 1}b_{n,k}^*b_{n,k}\Big)^{\frac 1 2}\Big\|_{E_{\theta}}\right\}\leq \Big(\|y\|_{E(\mathcal M;\ell_1^\theta)}+\varepsilon\Big)^{\frac 1 2}.$$
Obviously, for every $n\geq 1$, we have
$$x_n+y_n= \sum_{k\geq 1} v_{n,k} w_{n,k}+\sum_{k\geq 1} a_{n,k} b_{n,k}.$$
Note that $\|\cdot\|_{E_{1-\theta}^{(\frac 1 2)}(\mathcal M)}$ is a norm. Therefore,
\begin{align*}
&\Big\|\Big(\sum_{n,k\geq 1}v_{n,k}v_{n,k}^*+\sum_{n,k\geq 1}a_{n,k}a_{n,k}^*\Big)^{\frac 1 2}\Big\|_{E_{1-\theta}}\\
&=\Big\|\sum_{n,k\geq 1}v_{n,k}v_{n,k}^*+\sum_{n,k\geq 1}a_{n,k}a_{n,k}^*\Big\|_{E_{1-\theta}^{(\frac 1 2)}}^{\frac 1 2}\\
&\leq \Bigg(\Big\|\sum_{n,k\geq 1}v_{n,k}v_{n,k}^*\Big\|_{E_{1-\theta}^{(\frac 1 2)}}+\Big\|\sum_{n,k\geq 1}a_{n,k}a_{n,k}^*\Big\|_{E_{1-\theta}^{(\frac 1 2)}}\Bigg)^{\frac 1 2}\\
&\leq \Big(\|x\|_{E(\mathcal M;\ell_1^\theta)}+\|y\|_{E(\mathcal M;\ell_1^\theta)}+2\varepsilon\Big)^{\frac 1 2}.
\end{align*}
Similarly, one can show that
$$\Big\|\Big(\sum_{n,k\geq 1}w_{n,k}^*w_{n,k}+\sum_{n,k\geq 1}b_{n,k}^*b_{n,k}\Big)^{\frac 1 2}\Big\|_{E_{1-\theta}}\leq \Big(\|x\|_{E(\mathcal M;\ell_1^\theta)}+\|y\|_{E(\mathcal M;\ell_1^\theta)}+2\varepsilon\Big)^{\frac 1 2}.$$
Combining the last two estimates and letting $\varepsilon\to 0$, the desired assertion follows.
\end{proof}

The proof of the following result is similar to that of Proposition \ref{key-prop-banach}. We include  details for the convenience of the reader.

\begin{prop}\label{key-prop-banach-2}
Let $0\leq \theta\leq 1$ and  let $E$ be a symmetric Banach function space such that $E^\times$ is $\max\{2\theta, 2(1-\theta)\}$-convex. Then $E(\mathcal M;\ell_1^\theta)$ can be renormed to be a Banach space.
\end{prop}

\begin{proof}
In fact, we will prove that $E(\M;\ell_1^\theta)$ is a Banach space with respect to the norm $|||\cdot|||_{E(\M;\ell_1^\theta)}$ coming from Lemma \ref{norm-2}. Without causing any confusion, we replace $|||\cdot|||$ by $\|\cdot\|$ for simplicity below.

Let $x^{(1)},\,x^{(2)},\cdots,x^{(k)},\cdots$ be a Cauchy sequence in $E(\mathcal M;\ell_1^\theta)$. We may find a subsequence $x^{(n_1)},\,x^{(n_2)},\cdots\,x^{(n_k)},\cdots$ such that $$\|x^{(n_{1})}\|_{E(\mathcal M;\ell_1^\theta)}< 4^{-1}$$
and
$$\|x^{(n_{k})}-x^{(n_{k-1})}\|_{E(\mathcal M;\ell_1^\theta)} < 4^{-k},\quad k\geq 2.$$
It is well known that, by the triangle inequality proved in Lemma \ref{norm-2}, to obtain the completeness, it suffices to show that
$\sum_{k}x^{(n_{k+1})}-x^{(n_{k})}$ belongs to $E(\mathcal M;\ell_1^\theta)$. Set
$$\mathbf{x}^{(1)}=x^{(n_{1})}$$
and
$$\mathbf{x}^{(k)}=x^{(n_{k})}-x^{(n_{k-1})},\quad k\geq 2.$$
Then by Fact \ref{key-fact}, there exist  two families $v_{n,j}^{(k)}\in E_{1-\theta}(\mathcal M)$ and $w_{n,j}^{(k)}\in E_{\theta}(\mathcal M)$ such that  $\mathbf{x}^{(k)}_n=4^{-k} \sum_{j} v_{n,j}^{(k)} w_{n,j}^{(k)}$ with
\begin{equation}\label{key-equation-2}
\max\left\{\Big\|\Big(\sum_{n,j\geq 1}v_{n,j}^{(k)} {v_{n,j}^{(k)} }^*\Big)^{\frac 1 2}\Big\|_{E_{1-\theta}},\,\, \Big\|\Big(\sum_{n,k\geq 1}{w_{n,j}^{(k)}}^*w_{n,j}^{(k)}\Big)^{\frac 1 2}\Big\|_{E_{\theta}}\right\}< 1.
\end{equation}
Using a similar factorization trick as in Proposition \ref{key-prop-banach}, we define for $\varepsilon>0$
$$\mathbf{V}_{n,j}=\Big(\sum_{k=1}^\infty 2^{-k}v_{n,j}^{(k)} {v_{n,j}^{(k)} }^*+\varepsilon\textbf{1} \Big)^{\frac 1 2},\quad \mathbf W_{n,j}=\Big(\sum_{k=1}^\infty2^{-k}{w_{n,j}^{(k)}}^*w_{n,j}^{(k)}+\varepsilon\textbf{1} \Big)^{\frac 1 2}.$$
We claim that
$$(\mathbf{V}_{n,j})_{n,j}\in E_{1-\theta}\big(\mathcal M;\ell_2^r(\mathbb N^2)\big),\qquad (\mathbf{W}_{n,j})_{n,j}\in E_{\theta}\big(\mathcal M;\ell_2^c(\mathbb N^2)\big).$$
Indeed,  the triangle inequality holds true in $E_{1-\theta}^{(\frac 1 2)}(\mathcal M)$. Thus,
\begin{align*}
\Big\|\Big(\sum_{n,j=1}^\infty \mathbf{V}_{n,j}\mathbf{V}_{n,j}^*\Big)^{\frac 1 2}\Big\|_{E_{1-\theta}}&= \Big\| \sum_{k=1}^\infty 2^{-k} \sum_{n,j=1}^\infty v_{n,j}^{(k)} {v_{n,j}^{(k)} }^*+\varepsilon\textbf{1}\Big\|_{E_{1-\theta}^{(\frac 1 2)}}^{\frac 1 2}\\
& \leq \Big(\sum_{k=1}^\infty2^{-k}\Big\| \sum_{n,j=1}^\infty v_{n,j}^{(k)} {v_{n,j}^{(k)} }^*\Big\|_{E_{1-\theta}^{(\frac 1 2)}}+\varepsilon\Big)^{\frac 1 2}.
\end{align*}
According to \eqref{key-equation-2}, it follows that
$$
\Big\|\Big(\sum_{n,j=1}^\infty \mathbf{V}_{n,j}\mathbf{V}_{n,j}^*\Big)^{\frac 1 2}\Big\|_{E_{1-\theta}}\leq (1+\varepsilon)^{\frac 1 2}.
$$
A similar argument can be applied to get that
$$\Big\|\Big(\sum_{n,j=1}^\infty \mathbf{W}_{n,j}^*\mathbf{W}_{n,j}\Big)^{\frac 1 2}\Big\|_{E_{\theta}}\leq (1+\varepsilon)^{\frac 1 2}.$$
This shows our claim. Obviously, there exist contractions $\alpha_{n,j}^{(k)}$, $\beta_{n,j}^{(k)}$ such that
$$2^{-k/2} v_{n,j}^{(k)} =\mathbf{V}_{n,j} \alpha_{n,j}^{(k)},\qquad 2^{-k/2} w_{n,j}^{(k)}=\beta_{n,j}^{(k)} \mathbf{W}_{n,j}.$$
It is clear that, for every $n\geq 1$, the $n$-th term of $\sum_{k=1}^\infty \mathbf{x}^{(k)}$ equals to
$$\sum_{k=1}^\infty \mathbf{x}_n^{(k)} = \sum_{j=1}^\infty \mathbf{V}_{n,j} \Big(\sum_{k= 1}^{\infty}2^{-k}\alpha_{n,j}^{(k)}\ \beta_{n,j}^{(k)} \Big) \mathbf{W}_{n,j}.$$
For fixed $n$, $j$, the middle term of the above expression is a contraction:
$$\Big\|\sum_{k= 1}^{\infty}2^{-k}\alpha_{n,j}^{(k)}\beta_{n,j}^{(k)}\Big\|_\infty\leq \sum_{k= 1}^{\infty}2^{-k}\| \alpha_{n,j}^{(k)}\beta_{n,j}^{(k)}\|_\infty\leq 1.$$
Therefore,
$$\|\sum_{k=1}^\infty \mathbf{x}^{(k)}\|_{E(\mathcal M;\ell_1^\theta)}\leq \Big\|\Big(\sum_{n,j=1}^\infty \mathbf{V}_{n,j}\mathbf{V}_{n,j}^*\Big)^{\frac 1 2}\Big\|_{E_{1-\theta}}\Big\|\Big(\sum_{n,j=1}^\infty \mathbf{W}_{n,j}^*\mathbf{W}_{n,j}\Big)^{\frac 1 2}\Big\|_{E_{\theta}}\leq 1+\varepsilon.$$
The proof is complete.
\end{proof}

\begin{rem}\label{dense-remark}
Let $\mathfrak{F}$ be the set of elements $x=(x_n)_{n\geq 1}$ with
$$x_n=\sum_{k\geq 1}v_{n,k}w_{n,k},\quad n\geq 1$$
and $$\emph{Card}\big\{(n,k): v_{n,k}\neq 0\,\,\textit{or}\,\, w_{n,k}\neq 0\big\}<\infty.$$
Then  $\mathfrak{F}$ is dense in $E(\mathcal M;\ell_1^\theta)$. This can be seen from the fact that the set of finite sequences is dense in $E_{1-\theta}(\mathcal M;\ell_2^r)$ and also in $E_\theta(\mathcal M;\ell_2^c)$.
\end{rem}

Let $\h_E^{1_\theta}$ (resp. $\h_E^{\infty_\theta}$) be the subspace of $E(\mathcal M; \ell_1^\theta)$ (resp. $E(\mathcal M; \ell_\infty^\theta)$) consisting of martingale difference sequences. We have the following complemented result.

\begin{prop}\label{complemented-lem} Let $0\leq \theta \leq 1$. Assume that $E$ is a symmetric Banach function space with $E\in{\rm Int}(L_p, L_q)$ for $1<p\leq q<\infty$.
\begin{enumerate}[\rm (i)]
\item If $E^\times$ is $\max\{2\theta,2(1-\theta)\}$-convex and $q'>\max\{2\theta,2(1-\theta)\}$, then $\h_E^{1_\theta}$ is complemented in $E(\mathcal M; \ell_1^\theta)$.

\item If $E$ is $\max\{2\theta, 2(1-\theta)\}$-convex with Fatou norm, and $p>\max\{2\theta,2(1-\theta)\}$, then $\h_E^{\infty_\theta}$ is complemented in $E(\mathcal M; \ell_\infty^\theta)$.
\end{enumerate}
\end{prop}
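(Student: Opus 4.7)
The natural candidate projection is $P:(x_n)_{n\geq 1}\mapsto (\E_n x_n-\E_{n-1}x_n)_{n\geq 1}$, which is linear, idempotent, and has range equal to the martingale difference subspace of the ambient space (since $\E_n(d_n)=d_n$ and $\E_{n-1}(d_n)=0$ for any martingale difference). So in both parts the task reduces to showing that $P$ is bounded on the relevant asymmetric vector valued space.

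For (ii), given $x$ with $\|x\|_{E(\M;\ell_\infty^\theta)}<1$, Fact~\ref{key-fact} supplies a factorization $x_n=ay_nb$ with each of $\|a\|_{E^{(\frac{1}{1-\theta})}}$, $\sup_n\|y_n\|_\infty$, $\|b\|_{E^{(\frac{1}{\theta})}}$ below $1$. By \eqref{eq: int}, the hypothesis $p>\max\{2(1-\theta),2\theta\}$ places both $E^{(\frac{1}{2(1-\theta)})}$ and $E^{(\frac{1}{2\theta})}$ into $\mathrm{Int}(L_{p_0},L_{q_0})$ for some $p_0>1$, so the noncommutative Doob maximal inequality is available in each. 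Applied to $aa^*$ and $b^*b$ it produces positive operators $\alpha\in E^{(\frac{1}{1-\theta})}(\M)$ and $\beta\in E^{(\frac{1}{\theta})}(\M)$ with $\E_j(aa^*)\le\alpha^2$, $\E_j(b^*b)\le\beta^2$ uniformly in $j$, and with norms controlled by $\|a\|$, $\|b\|$. The plan is then to produce a factorization $P(x)_n=\alpha z_n\beta$ with $\sup_n\|z_n\|_\infty$ absolutely bounded. To do so, split $ay_nb=\E_{n-1}(a)\,y_nb+(a-\E_{n-1}(a))\,y_nb$ and apply $\E_n-\E_{n-1}$: the adapted summand factors through $\M_{n-1}$ and can be absorbed into $\alpha$ and $\beta$ using the polar-type decompositions coming from $\E_j(aa^*)\le\alpha^2$ and $\E_j(b^*b)\le\beta^2$, while the residual summand is controlled by operator Cauchy--Schwarz against these Doob dominations.

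For (i), a structurally parallel direct argument applies, starting from the representation of Fact~\ref{key-fact-2} and invoking Doob's inequality in $E_{1-\theta}^{(1/2)}$ and $E_\theta^{(1/2)}$, whose availability is guaranteed by the hypothesis $q'>\max\{2(1-\theta),2\theta\}$ (these spaces arise via K\"othe duality from $E^\times\in\mathrm{Int}(L_{q'},L_{p'})$, in exactly the regime of (ii) applied to $E^\times$). The Doob maximal operators of $\sum_{n,k}v_{n,k}v_{n,k}^*$ and $\sum_{n,k}w_{n,k}^*w_{n,k}$ provide enlarged row/column sums dominating those of $P(x)$, which yields the desired factorization. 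Alternatively, once the duality of Section~\ref{main-duality} is available, (i) follows from (ii) applied to $E^\times$ by a predual argument, since under the duality $(E(\M;\ell_1^\theta))^*\cong E^\times(\M;\ell_\infty^\theta)$ the projection $P$ is self-dual. The main technical obstacle in either route is the construction of the new factorization of $P(x)$: because $a$ and $b$ are not adapted, no clean identity $\E_n(ay_nb)=\tilde a\,\tilde y_n\,\tilde b$ is available, and one must combine the adapted/non-adapted splitting with careful Kadison--Schwarz estimates against $\alpha$ and $\beta$; tracking the mixed residual terms is the most delicate part of the proof.
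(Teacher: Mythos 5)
Your high-level strategy is the right one -- show boundedness of the projection onto martingale differences -- and you correctly identify that Doob's inequality and its dual (in the appropriate convexified spaces) are the estimates that must come in, which requires exactly the index restrictions in the hypotheses. However, the concrete mechanism you propose for producing the new factorization $P(x)_n=\alpha z_n\beta$ does not work, and this is not a delicate technical point to be tracked later but the heart of the proof. The splitting $ay_nb=\E_{n-1}(a)y_nb+(a-\E_{n-1}(a))y_nb$ does not make the first summand adapted (neither $y_n$ nor $b$ lies in $\M_{n-1}$), so $\E_n-\E_{n-1}$ does not simplify on it; and an operator Cauchy--Schwarz bound on the residual piece gives at best a norm estimate, not a factorization of $(\E_n x_n)_n$ of the form required to show membership in $E(\M;\ell_\infty^\theta)$ (a space that is \emph{defined} via a global three-term factorization $\tilde a\tilde y_n\tilde b$, not via any intrinsic sequence norm).

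The ingredient you are missing is Junge's representation of conditioned spaces, i.e.\ the isometric right $\M_n$-module map $u_n:L_p^c(\M,\E_n)\to L_p(\M_n;\ell_2^c)$ with $u_n(x)^*u_n(y)=\E_n(x^*y)\otimes e_{1,1}$ (cited in the paper as \cite[Proposition~2.8]{Ju}). This is precisely the ``clean identity'' you state is not available: it rewrites $\E_n(ay_nb)\otimes e_{1,1}=u_n(y_na^*)^*u_n(b)$, exhibiting $(\E_n(ay_nb))_n$ as a product of a sequence in $E^{(\frac{1}{1-\theta})}(\M;\ell_\infty^r)$ and one in $E^{(\frac{1}{\theta})}(\M;\ell_\infty^c)$. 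The row/column control then comes from $u_n(y_na^*)^*u_n(y_na^*)=\E_n(ay_n^2a^*)\otimes e_{1,1}\le\E_n(aa^*)\otimes e_{1,1}$ together with the Doob inequality in $E^{(\frac1{2(1-\theta)})}$ (and similarly for $b$), and the factorization $E(\M;\ell_\infty^\theta)=E^{(\frac{1}{1-\theta})}(\M;\ell_\infty^r)\odot E^{(\frac{1}{\theta})}(\M;\ell_\infty^c)$ closes the argument. Part (i) is handled the same way, using $\E_n(v_{n,k}w_{n,k})\otimes e_{1,1}=\sum_j u_n(v_{n,k}^*)(j)^*u_n(w_{n,k})(j)$ followed by the dual Doob inequality; your alternative duality route for (i) would also need care to avoid circularity, since the Hardy-space duality in the paper (Corollary~\ref{duality}) is \emph{derived from} this complementation result. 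Without the $u_n$ maps your argument has a genuine gap.
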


\begin{proof} Let us first show (i). By Proposition \ref{key-prop-banach-2}, $E(\mathcal M;\ell_1^\theta)$ can be renormed to be a Banach space. It therefore suffices to prove that the Stein projection
$$\mathcal D((x_n)_{n\geq 1})=(d x_n)_{n\geq 1}$$
is bounded on $E(\mathcal M; \ell_1^\theta)$. Let $x_n=\sum_{k} v_{n,k} w_{n,k}$ be the decomposition of $x_n$ as in  \eqref{l1-theta-factorization}. Then for each $n$ we may write
\begin{align*}
\mathcal E_n(x_n)\otimes e_{1,1}&=\sum_{k}\mathcal E_n (v_{n,k}w_{n,k})\otimes e_{1,1}\\
&=\sum_{k,\,j}u_n(v_{n,k}^*)(j)^* u_n(w_{n,k})(j).
\end{align*}
Note that $E_{1-\theta}^{(\frac 12)}\in \textrm{Int}(L_r,L_s)$ for some $1< r \leq s<\infty $ when   $q'>\max\{2\theta,2(1-\theta)\}$. Applying the dual Doob
inequality for symmetric spaces (\cite[Corollary 4.13]{Dirksen2}), we have
\begin{align*}
\Big\|\Big(\sum_{n,\,k,\,j}  |u_n(v_{n,k})(j)^*| \Big)^{\frac 1 2}\Big\|_{E_{1-\theta}}&=\Big\|\Big(\sum_{n,\,k}\mathcal E_n(v_{n,k}v_{n,k}^*)\Big)^{\frac 1 2}\Big\|_{E_{1-\theta}}\\
&\lesssim_E \Big\|\Big(\sum_{n,\,k}v_{n,k}v_{n,k}^*\Big)^{\frac 1 2}\Big\|_{E_{1-\theta}}.
\end{align*}
Similarly,
\begin{align*}
\Big\|\Big(\sum_{n,\,k,\,j}  | u_n(w_{n,k})(j)|  \Big)^{\frac 1 2}\Big\|_{E_{\theta}}&=\Big\|\Big(\sum_{n,\,k}\mathcal E_n(w_{n,k}^*w_{n,k})\Big)^{\frac 1 2}\Big\|_{E_{\theta}}\\ &\lesssim_E \Big\|\Big(\sum_{n,\,k}w_{n,k}^*w_{n,k}\Big)^{\frac 1 2}\Big\|_{E_{\theta}}.
\end{align*}
Therefore, $(\mathcal E_n(x_n))_{n\geq 1}\in E(\mathcal M;\ell_1^\theta)$, which shows that the space $\h_E^{1_\theta}$ is complemented in $E(\mathcal M;\ell_1^\theta)$.

Now we turn to prove (ii). Let $x_n=ay_nb$ be the decomposition of $x_n$ as in
\eqref{linfty-theta-factorization}.  It suffices to show that $(\mathcal E_n (ay_nb))_{n\geq1}\in E(\mathcal M;\ell_\infty^\theta)$. Without loss of generality, we may assume that   $\sup_n\|y_n\|_\infty\leq 1$. From \eqref{u}, we immediately have
$$\mathcal E_n (ay_nb)\otimes e_{1,1}= u_n(y_n^*a^*)^*u_n(b).$$
We claim that $(u_n(y_n^*a^*)^*)_{n\geq 1}\in E^{(\frac{1}{1-\theta})}(\mathcal M; \ell_\infty^r)$ and $(u_n(b))_{n\geq 1}\in E^{(\frac{1}{\theta})}(\mathcal M; \ell_\infty^c)$. Indeed, we have
\begin{align*}
u_n(y_n^*a^*)^*u_n(y_na^*)=\mathcal E_n(a|y_n^*|^2 a^*)\otimes e_{1,1}\leq \mathcal E_n(a a^*)\otimes e_{1,1}.
\end{align*}
Note that $E^{(\frac{1}{2(1-\theta)})}\in \textrm{Int} (L_m,L_n)$  for some $1<m\leq n<\infty$ when $p>\max\{2\theta,2(1-\theta)\}$.  Combining the above inequality with the Doob inequality (\cite[Theorem 5.7]{Dirksen2}), we obtain that $(u_n(y_na^*)^*)_{n\geq 1}\in E^{(\frac{1}{1-\theta})}(\mathcal M; \ell_\infty^r)$.
Similar arguments can be applied to prove that $(u_n(b))_{n\geq 1}\in E^{(\frac{1}{\theta})}(\mathcal M; \ell_\infty^c)$. It is obvious that
$$E(\mathcal M;\ell_\infty^\theta)= E^{(\frac{1}{1-\theta})}(\mathcal M; \ell_\infty^r)\odot E^{(\frac{1}{\theta})}(\mathcal M; \ell_\infty^c).$$
Therefore, we have $(\mathcal E_n (ay_nb))_{n\geq 1}\in E(\mathcal M;\ell_\infty^\theta)$. The assertion is verified.
\end{proof}

\section{An interpolation result}\label{complex-interpolation}
Our main result of this section is a new complex interpolation result on asymmetric vector valued spaces $E(\mathcal M;\ell_\infty^\theta)$, which plays an essential role in the proofs of asymmetric Burkholder and Johnson-Schechtman inequalities.

\begin{thm}\label{complex-interpolation-result-1}
Let $\theta_0, \theta_1$ be such that $0\leq \theta_0 < \theta_1\leq 1$. Let $E$ be a fully symmetric Banach function.  Suppose that $E$ is $2\max\big\{1-\theta_0, \theta_0,1-\theta_1,\theta_1\big\}$-convex  with Fatou norm. Let $0\leq \theta, \eta \leq 1$ be such that
$$\eta=\frac{\theta-\theta_0}{\theta_1-\theta_0}.$$
Then
$$ E(\mathcal M; \ell_\infty^\theta)
=\big[E(\mathcal M; \ell_\infty^{\theta_0}), E(\mathcal M; \ell_\infty^{\theta_1})\big]_\eta.$$
\end{thm}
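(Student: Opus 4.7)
My plan is to establish the equality via two norm-one continuous inclusions, each proved by an explicit analytic family on the closed strip $\overline{\mathcal S}$.

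For $E(\mathcal M;\ell_\infty^\theta) \hookrightarrow [E(\mathcal M;\ell_\infty^{\theta_0}), E(\mathcal M;\ell_\infty^{\theta_1})]_\eta$, suppose $\|x\|_{E(\mathcal M;\ell_\infty^\theta)} < 1$. Fact \ref{key-fact} yields a factorization $x_n = a y_n b$ with $\|a\|_{E^{(1/(1-\theta))}}$, $\sup_n\|y_n\|_\infty$, $\|b\|_{E^{(1/\theta)}}$ all strictly less than $1$. A standard approximation in the spirit of Lemma \ref{useful-factorization} reduces to the case where $a$ and $b$ are positive and invertible with bounded inverse, so that $z\mapsto a^s, b^s$ are well defined via Borel functional calculus. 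I would then set
\[
f(z)_n = a^{\alpha(z)}\, y_n\, b^{\beta(z)}, \qquad \alpha(z) = \frac{(1-\theta_0)(1-z) + (1-\theta_1)z}{1-\theta}, \quad \beta(z) = \frac{\theta_0(1-z) + \theta_1 z}{\theta}.
\]
Because $\theta = (1-\eta)\theta_0 + \eta\theta_1$, one has $\alpha(\eta) = \beta(\eta) = 1$ and hence $f(\eta) = x$. On $\partial_j$ the norm identity $\|c^r\|_{E^{(s)}} = \|c\|_{E^{(rs)}}^r$ (valid for positive $c$) gives $\|f(z)\|_{E(\mathcal M;\ell_\infty^{\theta_j})} < 1$, and bounded analyticity into $E(\mathcal M;\ell_\infty^{\theta_0}) + E(\mathcal M;\ell_\infty^{\theta_1})$ is routine given the invertibility assumption.

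For the reverse inclusion, my strategy is to reduce to Lemma \ref{complex-E-infty} applied to the scalar-valued ``outer'' factors of the defining product structure. Applying that lemma to the diagonal couple $(E, E)$ yields the Hölder-type identity $E^{(r)} \odot E^{(s)} = E^{(t)}$ whenever $1/t = 1/r + 1/s$, and combining this with Lemma \ref{complex-E-infty} applied to the convexified couple and a direct computation of the relevant exponents one obtains
\[
\bigl[E^{(1/(1-\theta_0))}(\mathcal M), E^{(1/(1-\theta_1))}(\mathcal M)\bigr]_\eta = E^{(1/(1-\theta))}(\mathcal M),
\]
and similarly for the $\theta$-exponent. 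Since $[L_\infty(\mathcal M\overline\otimes\ell_\infty), L_\infty(\mathcal M\overline\otimes\ell_\infty)]_\eta = L_\infty(\mathcal M\overline\otimes\ell_\infty)$ interpolates trivially for the middle factor, this identifies the interpolated space with the pointwise-product description of $E(\mathcal M;\ell_\infty^\theta)$.

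The main obstacle is upgrading this scalar interpolation identity to the vector-valued factorization setting: given an abstract $f \in \mathcal F(X_0, X_1)$ with $\|f\|_{\mathcal F} < 1$ and $f(\eta) = x$, one must extract a coherent analytic family of factorizations $f(z)_n = a(z)\, y(z)_n\, b(z)$ from the boundary data, so that $a(\eta), (y(\eta)_n), b(\eta)$ realize $x$ as an element of $E(\mathcal M;\ell_\infty^\theta)$. I expect this to be handled by a Poisson-integral/exponential construction combined with a fiberwise application of Lemma \ref{useful-factorization} to guarantee invertibility of the selected scalar factors, permitting the application of functional calculus in the interior of the strip. The hypothesis $p \ge \max\{2(1-\theta_j),\,2\theta_j : j=0,1\}$ is exactly what ensures that the convexifications $E^{(1/(2(1-\theta_j)))}$ and $E^{(1/(2\theta_j))}$ are honest Banach spaces, so that Proposition \ref{key-prop-banach} and Lemma \ref{complex-E-infty} are simultaneously applicable to all four couples appearing in the argument.
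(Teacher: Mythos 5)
Your first inclusion is fine: the explicit analytic family $f(z)_n=a^{\alpha(z)}y_nb^{\beta(z)}$ with affine exponents (after perturbing $a,b$ to be invertible, e.g.\ replacing $a$ by $a+\varepsilon\mathbf 1$, which is harmless since $\tau$ is finite) is a hands-on version of what the paper gets from multilinearity plus Lemma~\ref{complex-E-infty}, and the boundary estimates via $\|c^{r}\|_{E^{(s)}}=\|c\|_{E^{(rs)}}^{r}$ do give the contractive embedding $E(\mathcal M;\ell_\infty^{\theta})\subset[E(\mathcal M;\ell_\infty^{\theta_0}),E(\mathcal M;\ell_\infty^{\theta_1})]_\eta$.

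The reverse inclusion, however, is where the whole difficulty lies, and your proposal does not prove it. Knowing $[E^{(1/(1-\theta_0))}(\mathcal M),E^{(1/(1-\theta_1))}(\mathcal M)]_\eta=E^{(1/(1-\theta))}(\mathcal M)$ (and trivial interpolation of the middle $L_\infty$ factor) says nothing by itself about the interpolation space of the factorization-defined spaces $E(\mathcal M;\ell_\infty^{\theta_j})$: complex interpolation does not commute with such quotient-of-a-product constructions, and you acknowledge this, but the device you propose to bridge the gap does not work. Given $f\in\mathcal F$ with $f(\eta)=x$, the boundary factorizations $f(z)=a(z)y(z)b(z)$ on $\partial_0\cup\partial_1$ furnished by Fact~\ref{key-fact} are merely measurable in $z$; a Poisson integral of $a(z)$ or $b(z)$ is harmonic, not analytic, and bears no multiplicative relation to $f$ at interior points, while the commutative trick of exponentiating the harmonic extension of $\log|a|$ fails because operator exponentials are not multiplicative; Lemma~\ref{useful-factorization} applied fiberwise produces no analyticity in $z$ at all. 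The missing ingredient, and the heart of the paper's argument, is the noncommutative Devinatz/Szeg\H{o} outer factorization (via \cite[Corollary 8.2]{PX3}): one factors $\varphi_\varepsilon(z)=a(z)^*a(z)+\varepsilon\mathbf 1=\alpha(z)^*\alpha(z)$ with $\alpha$ in the Hardy space $H^2(L_\infty(\partial\mathcal S)\overline\otimes\mathcal M)$ so that $|\alpha|$ extends to a bounded analytic $\widetilde\alpha$ on $\mathcal S$ whose inverse is bounded analytic, likewise $\widetilde\beta$ for $b$, and then $\widetilde y=\widetilde\alpha^{-1}f\widetilde\beta^{-1}$ is bounded analytic with $\|\widetilde y\|\le1$ on the boundary by $a^*a\le\widetilde\alpha^*\widetilde\alpha$, while Lemmas~\ref{complex-E-infty} and~\ref{fully-symmetric-lem} control $\|\widetilde\alpha(\eta)\|_{E^{(1/(1-\theta))}}$ and $\|\widetilde\beta(\eta)\|_{E^{(1/\theta)}}$ from the boundary norms. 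One also needs the paper's reduction (its Step~2) to $x\in L_\infty(\mathcal M\overline\otimes\ell_\infty)$, using completeness of $E(\mathcal M;\ell_\infty^\theta)$ (Proposition~\ref{key-prop-banach}) and density, to pass from the dense class to the whole space. Without an argument of this type your reverse inclusion remains a conjecture rather than a proof. (A minor further point: the hypothesis $p\ge\max\{2(1-\theta_j),2\theta_j\}$ is needed for the triangle inequality and completeness of the spaces $E(\mathcal M;\ell_\infty^{\theta_j})$, not for the convexifications $E^{(1/(1-\theta_j))}$ to be Banach, which they always are.)
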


\begin{proof}
From Proposition \ref{key-prop-banach}, by the assumptions that $E$ has Fatou norm and $E$ is $\max\big\{2(1-\theta_0),2\theta_0,2(1-\theta_1),2\theta_1\big\}$-convex, we know that $E(\mathcal M; \ell_\infty^{\theta_0})$ and $E(\mathcal M; \ell_\infty^{\theta_1})$ can be renormed to be Banach spaces. Therefore, $$\big[E(\mathcal M; \ell_\infty^{\theta_0}), E(\mathcal M; \ell_\infty^{\theta_1})\big]_\eta$$ is also a Banach space. For the sake of clarity, we split the proof into several steps.
\smallskip

\noindent \textit{Step 1.} The lower estimate is easy. Indeed, let
$$T_1: E^{(\frac{1}{1-\theta_0})}(\mathcal M)\times L_\infty(\mathcal M\overline{\otimes} \ell_\infty)\times E^{(\frac{1}{\theta_0})}(\mathcal M)\to E(\mathcal M,\ell_\infty^{\theta_0}),$$
$$T_2: E^{(\frac{1}{1-\theta_1})}(\mathcal M)\times L_\infty(\mathcal M\overline{\otimes} \ell_\infty)\times E^{(\frac{1}{\theta_1})}(\mathcal M)\to E(\mathcal M,\ell_\infty^{\theta_1})$$
be the maps given by
$$T_1(a,(y_n)_{n\geq 1},b)= (ay_nb)_{n\geq 1}= T_2(a,(y_n)_{n\geq 1},b).$$ Clearly, both $T_1$, $T_2$ are contractive and multilinear. Therefore, from Lemma \ref{complex-E-infty}, we have the following continuous embedding:
$$ E(\mathcal M; \ell_\infty^\theta)
\subset \big[E(\mathcal M; \ell_\infty^{\theta_0}), E(\mathcal M; \ell_\infty^{\theta_1})\big]_\eta.$$
\smallskip

\noindent\textit{Step 2.} We claim that it suffices to show that
\begin{equation}\label{key-inequality}
\|x\|_{E(\mathcal M; \ell_\infty^\theta)}\leq \|x\|_{[E(\mathcal M; \ell_\infty^{\theta_0}), E(\mathcal M; \ell_\infty^{\theta_1})]_\eta},\quad \forall\,x=(x_n)_{n\geq 1}\in L_\infty(\mathcal M\overline\otimes \ell_\infty).
\end{equation}
In fact, assume that the inequality \eqref{key-inequality} holds true.  Then, combining  inequality \eqref{key-inequality} with
the  estimate established in Step 1, we deduce that
$$\|x\|_{E(\mathcal M; \ell_\infty^\theta)}= \|x\|_{[E(\mathcal M; \ell_\infty^{\theta_0}), E(\mathcal M; \ell_\infty^{\theta_1})]_\eta},\quad \forall\,x=(x_n)_{n\geq 1}\in L_\infty(\mathcal M\overline\otimes \ell_\infty).$$
It is obvious that $L_\infty(\mathcal M\overline\otimes \ell_\infty)$ continuously embeds into $E(\mathcal M; \ell_\infty^\theta)$ as a dense subspace. According to Proposition \ref{key-prop-banach}, we know that
the space $E(\mathcal M; \ell_\infty^\theta)$ is a Banach space.  Therefore, it follows that
$$\|x\|_{E(\mathcal M; \ell_\infty^\theta)}= \|x\|_{[E(\mathcal M; \ell_\infty^{\theta_0}), E(\mathcal M; \ell_\infty^{\theta_1})]_\eta},\quad\forall\, x=(x_n)_{n\geq 1}\in E(\mathcal M; \ell_\infty^\theta).$$
This implies that $E(\mathcal M; \ell_\infty^\theta)$ isometrically embeds into $[E(\mathcal M; \ell_\infty^r), E(\mathcal M; \ell_\infty^c)]_\eta$. Since $ L_\infty(\mathcal M\overline\otimes \ell_\infty)$ is dense in the space $E(\mathcal M; \ell_\infty^{\theta_0})\cap E(\mathcal M; \ell_\infty^{\theta_1})$ and therefore dense in the space $[E(\mathcal M; \ell_\infty^{\theta_0}), E(\mathcal M; \ell_\infty^{\theta_1})]_\eta$, we further deduce that the space $E(\mathcal M; \ell_\infty^\theta)$ is norm dense in $[E(\mathcal M; \ell_\infty^{\theta_0}), E(\mathcal M; \ell_\infty^{\theta_1})]_\eta$.
Hence, $E(\mathcal M; \ell_\infty^\theta)$ and $[E(\mathcal M; \ell_\infty^{\theta_0}), E(\mathcal M; \ell_\infty^{\theta_1})]_\eta$ coincide, which proves the claim.

\smallskip

\noindent\textit{Step 3.} We now verify \eqref{key-inequality}. Let $x=(x_n)_{n\geq 1}\in L_\infty(\mathcal M\overline\otimes \ell_\infty)$  such that $$\|x\|_{[E(\mathcal M; \ell_\infty^{\theta_0}), E(\mathcal M; \ell_\infty^{\theta_1})]_\eta}< 1.$$
Then, there exists a bounded analytic function (here, $\mathcal{S}$ is referred to Section \ref{Complex})
$$f:\mathcal{S}\longrightarrow  E(\mathcal M; \ell_\infty^{\theta_0})+E(\mathcal M; \ell_\infty^{\theta_1})$$
 such that $f(\theta)=x$ and
\begin{equation}\label{boundary-condition-1}
\max\Big\{\sup_{z\in \partial_0}\|f(z)\|_{E(\mathcal M; \ell_\infty^{\theta_0})},\,\,\sup_{z\in \partial_1}\|f(z)\|_{E(\mathcal M; \ell_\infty^{\theta_1})}\Big\}< 1.
\end{equation}
From the boundary condition \eqref{boundary-condition-1} and the fact that $L_\infty(\mathcal M\overline\otimes \ell_\infty)$ can be seen as a  dense subspace of $E(\mathcal M; \ell_\infty^\theta)$ for any $0\leq \theta\leq 1$, we deduce that
$f|_{\partial\mathcal{S}}$ can be written  as follows:
$$f(z)=a(z)y(z)b(z), \; z\in\partial\mathcal{S},$$
where $a: \partial_j  \to  \M$, $b: \partial_j  \to  \M$ and $y:\partial_j\to L_\infty(\mathcal M\overline\otimes \ell_\infty)$ with $j=0,1$ satisfying the following estimates
\begin{equation}\label{boundary-condition-2}
\begin{split}
&\sup_{z\in\partial_0}\max\Big\{\|a(z)\|_{E^{(\frac{1}{1-\theta_0})}},\,\|y(z)\|_{L_\infty(\mathcal M\overline\otimes \ell_\infty)},\,\|b(z)\|_{{E^{(\frac{1}{\theta_0})}}}\Big\}< 1,\\
&\sup_{z\in\partial_1}\max\Big\{\|a(z)\|_{E^{(\frac{1}{1-\theta_1})}},\,\|y(z)\|_{L_\infty(\mathcal M\overline\otimes \ell_\infty)},\,\|b(z)\|_{{E^{(\frac{1}{\theta_1})}}}\Big\}< 1.
\end{split}
\end{equation}

Fix $\varepsilon>0$.  Define $\varphi_\varepsilon: \partial \mathcal{S}\rightarrow \M$ as $\varphi_\varepsilon (z)= a(z)a(z)^*  +\varepsilon\textbf{1}$. By  Devinatz's factorization theorem (see \cite[Theorem 2.2]{Pisier1995}, \cite{Devinatz1961}) using a conformal mapping from $\mathcal{S}$ onto the unit disc $\mathbb{D}$, there exists an analytic function $\alpha: \mathcal{S} \rightarrow \M$ satisfying that $\alpha^{-1}$ exists and it is also bounded analytic on $\mathcal{S}$, and
such that $\varphi_\varepsilon =\alpha^*\alpha$ on $\partial S$, i.e., we have the following identity on the boundary of $\mathcal{S}$:
\begin{equation}\label{boundary-functions 1}
{\alpha}(z)^*{\alpha}(z)=a(z)a(z)^* +\varepsilon\textbf{1}.
\end{equation}
Similarly, we may find a bounded analytic function $\beta:\mathcal S\to \M$ with bounded analytic inverse $ \beta^{-1}$  satisfying the following identities on the boundary:
\begin{equation}\label{boundary-functions 2}
{\beta}(z)^*{\beta}(z)=b(z)b(z)^* +\varepsilon\textbf{1}.
\end{equation}


Now we may write
$$f(z)=\alpha(z)\widetilde{y}(z)\beta(z),\quad  z\in \mathcal{S}$$
with
$$\widetilde{y}(z)={\alpha}^{-1}(z)f(z)  \beta^{-1}(z).$$
Note that $\widetilde{y}: \mathcal{S}\longrightarrow  E(\mathcal M; \ell_\infty^{\theta_0})+E(\mathcal M; \ell_\infty^{\theta_1})$ is bounded analytic.
From \eqref{boundary-functions 1} and \eqref{boundary-functions 2}, we conclude the following estimates:
\begin{align*}
&\sup_{z\in\partial_0}\big\|\alpha(z)\big\|_{E^{(\frac{1}{1-\theta_0})}}^2  \leq \sup_{z\in\partial_0}\big\|a(z)\big\|_{E^{(\frac{1}{1-\theta_0})}}^2 +\varepsilon< 1+\varepsilon,\\
&\sup_{z\in\partial_1}\big\|\alpha(z)\big\|_{E^{(\frac{1}{1-\theta_1})}}^2  \leq \sup_{z\in\partial_1}\big\|a(z)\big\|_{E^{(\frac{1}{1-\theta_1})}}^2 +\varepsilon< 1+\varepsilon,\\
&\sup_{z\in\partial_0}\big\| \beta(z)\big\|_{E^{(\frac{1}{\theta_0})}}^2  \leq \sup_{z\in\partial_0}\big\|b(z)\big\|_{E^{(\frac{1}{\theta_0})}}^2 +\varepsilon< 1+\varepsilon,\\
&\sup_{z\in\partial_1}\big\| \beta(z)\big\|_{E^{(\frac{1}{\theta_1})}}^2 \leq \sup_{z\in\partial_1}\big\|b(z)\big\|_{E^{(\frac{1}{\theta_1})}}^2 +\varepsilon< 1+\varepsilon.
\end{align*}
According to Lemma \ref{complex-E-infty}  (note that Lemma \ref{fully-symmetric-lem} assures that the related symmetric Banach spaces are fully symmetric), the above estimates imply that
\begin{align*}
\|\alpha(\theta)\|_{E^{(\frac{1}{1-\theta})}}
< (1+\varepsilon)^{\frac 1 2},\qquad
\|\beta(\theta)\|_{E^{(\frac{1}{\theta})}}< (1+\varepsilon)^{\frac 1 2}.
\end{align*}
At the same time, it is clear that
\begin{align*}
\| {\alpha}^{-1}(z)a(z)\|_\M^2< 1,\quad \|b(z) \beta^{-1} (z)\|_\M^2 < 1,\; \forall z\in \partial\mathcal{S}.
\end{align*}
Therefore,
\begin{align*}
 &\sup_{z\in\partial_0}\|\widetilde{y}(z)\|_{L_\infty(\mathcal M\overline\otimes \ell_\infty)}\\&= \sup_{z\in\partial_0}\|\alpha^{-1}(z) a(z) y(z) b(z) \beta^{-1} (z)\|_{L_\infty(\mathcal M\overline\otimes \ell_\infty)}\\
 &\leq \sup_{z\in\partial_0} \|\alpha^{-1}(z)a(z)\|_\M\|y(z)\|_{L_\infty(\mathcal M\overline\otimes \ell_\infty)}\|b(z) \beta^{-1}(z)\|_\M\\
 &<  1
 \end{align*}
 and similarly,
 \begin{align*}
 \sup_{z\in\partial_1}\|\widetilde{y}(z)\|_{L_\infty(\mathcal M\overline\otimes \ell_\infty)}< 1.
\end{align*}
Then it is easy to see that
$$\|\widetilde{y}(\theta)\|_{L_\infty(\mathcal M\overline\otimes \ell_\infty)}< 1.$$
In conclusion, we have  the following factorization of $x$:
$$x=f(\theta)={\alpha}(\theta)\widetilde y(\theta)  \beta(\theta)$$
with
$$\|{\alpha}(\theta) \|_{E^{(\frac{1}{1-\theta})}}
< (1+\varepsilon)^{\frac 1 2}
,\quad \|\widetilde y(\theta)\|_{L_\infty(\mathcal M\overline\otimes \ell_\infty)}< 1,\quad \| {\beta} (\theta) \|_{E^{(\frac{1}{\theta})}}< (1+\varepsilon)^{\frac 1 2}.$$
Hence, we get
$$\|x\|_{E(\mathcal M; \ell_\infty^\theta)}< 1+\varepsilon.$$
Letting $\varepsilon\to 0$ in the above inequality, we obtain the desired result. The proof is complete.
\end{proof}

%
%

\begin{rem}
We should point out that, if $E=L_p$, then the above theorem goes back to a particular case of \cite[Theorem A]{JP2010}. Moreover, if $\theta=0,1$ and $E=L_p$ the above theorem goes back to \cite[Proposition 3.7]{Mu2003}.
\end{rem}




\section{Asymmetric Burkholder inequalities}\label{sec-asymmetric-burkholer}
This section is devoted to proving Theorem \ref{discrete-no-convex} and Theorem \ref{discrete-version-E}.

\subsection{Proof of Theorem \ref{discrete-no-convex}}

Our proof depends on the Cuculescu projections for self-adjoint martingales; see e.g. \cite[Proposition 1.4]{PR2006}.
Let $y\in L_1(\mathcal{M})$ be a self-adjoint operator. Let $R_{-1}^{\lambda}=1$ for $\lambda\in\mathbb{R}$ and define by induction
\begin{equation}\label{cuc p}
	R_n^{\lambda}=R_{n-1}^{\lambda}\chi_{(-\infty,\lambda)}(R_{n-1}^{\lambda}\mathcal{E}_n(y)R_{n-1}^{\lambda}),\quad\quad n\geq0.
\end{equation}
It is obvious that $(R_n^{\lambda})_{n\geq0}$ is decreasing.
Let
\begin{equation}\label{RQ}
	R^{\lambda}=\bigwedge_{n\geq0}R_n^{\lambda},
\end{equation}
and let
$Q_n^{\lambda}= R_{n-1}^{\lambda}-R_n^{\lambda},$ $n\geq1$. Then
\begin{equation}\label{sum-Q}
	\sum_{n\geq1}Q_n^{\lambda}=1-R^{\lambda}.
\end{equation}


The following result can be obtained by  the combination of \cite[Proposition 3.4]{JZZ} and   \cite[(3.8)]{JZZ}.
\begin{lem}
	For every $y=y^*\in L_2(\mathcal{ M})$ and  $\lambda>0,$ we have
	$$\tau(1-R^{2\lambda})\leq 12\sum_{n\geq1}\|Q_n^{\lambda}dy_nQ_n^{\lambda}\|_{2}^2+12\sum_{n\geq1}\|Q_n^{\lambda}(y-\mathcal{E}_n(y))\|_{2}^2.$$
\end{lem}

Before going further, we present two basic estimates.

\begin{lem}
	For every $y=y^*\in L_2(\mathcal{ M})$ and  $\lambda>0,$ we have
	\begin{equation*}
		\sum_{n\geq0}\|Q_n^{\lambda}(y-\mathcal{E}_n(y))\|_{2}^2\leq\tau ((1-R^{\lambda})s(y)^2),
	\end{equation*}
where $s(y):=s_c(y)$.
\end{lem}
\begin{proof}
	Note that
	$$\mathcal{E}_n\Big((y-\mathcal{E}_n(y))^2\Big)=\mathcal{E}_n\Big((\sum_{k>n}dy_k)^2\Big)=\mathcal{E}_n\Big(\sum_{k_1,k_2>n}dy_{k_1}dy_{k_2}\Big)=\sum_{k>n}\mathcal{E}_n(dy_k^2).$$
Since $Q_n^{\lambda}\in \mathcal{M}_n$, it follows that
	\begin{align*}
		\|Q_n^{\lambda}(y-\mathcal{E}_n(y))\|_{2}^2&=\tau\Big(Q_n^{\lambda}\cdot \mathcal{E}_n((y-\mathcal{E}_n(y))^2) \cdot Q_n^{\lambda}\Big)\\
		&=\sum_{k>n}\tau\Big(Q_n^{\lambda}\cdot \mathcal{E}_n(dy_k^2) \cdot Q_n^{\lambda}\Big)=\sum_{k>n}\tau\Big(Q_n^{\lambda}\cdot \mathcal{E}_{k-1}(dy_k^2) \cdot Q_n^{\lambda}\Big)\\
		&=\tau\Big(Q_n^{\lambda}\cdot \sum_{k>n}\mathcal{E}_{k-1}(dy_k^2) \cdot Q_n^{\lambda}\Big)\leq \tau(Q_n^{\lambda}\cdot s(y)^2\cdot Q_n^{\lambda}),
	\end{align*}
	which, combining with \eqref{sum-Q}, further implies
	the desired inequality.
\end{proof}

\begin{lem}\label{lem-fac}
	Let $\theta\in [0,1]$, and $y=y^*\in L_2(\mathcal{M})$. Assume that the martingale difference sequence $(dy_n)_{n\geq1}$ has a factorization $dy_n=av_nb$,  $n\geq1$, with $\sup_{n\geq1}\|v_n\|_{\infty}\leq 1$. Then
	$$\sum_{n\geq1}\|Q_n^{\lambda}dy_nQ_n^{\lambda}\|_{2}^2\leq \theta \tau((1-R^{\lambda})|a^*|^{2/\theta}) +(1-\theta)\tau((1-R^{\lambda})|b|^{2/(1-\theta)}).$$
\end{lem}
\begin{proof}
	Note that
	$$\frac{1}{2}=\frac{\theta}{2}+\frac{1-\theta}{2}.$$
	By the H\"{o}lder inequality, we have, for each $n\geq1$,
	\begin{align*}
		\|Q_n^{\lambda}dy_nQ_n^{\lambda}\|_{2}^2&=\|Q_n^{\lambda}av_nbQ_n^{\lambda}\|_{2}^2\leq \|Q_n^{\lambda}a\|_{{\frac{2}{\theta}}}^2\|bQ_n^{\lambda}\|_{{\frac{2}{1-\theta}}}^2\\
		&=\|Q_n^{\lambda}|a^*|^2Q_n^{\lambda}\|_{{\frac{1}{\theta}}} \|Q_n^{\lambda}|b|^2Q_n^{\lambda}\|_{{\frac{1}{1-\theta}}}.
	\end{align*}
	Recall that the Hansen inequality (see \cite[Page 249]{Ha1979}) states that for bounded operator $B$ and positive operator $A$,
	$$B^*AB\leq (B^*A^pB)^{\frac{1}{p}}, \quad \forall\, p\geq1.$$
	From this, we have
	$$Q_n^{\lambda}|a^*|^2Q_n^{\lambda}\leq (Q_n^{\lambda}|a^*|^{2/\theta}Q_n^{\lambda})^\theta,\quad Q_n^{\lambda}|b|^2Q_n^{\lambda}\leq (Q_n^{\lambda}|b|^{2/(1-\theta)}Q_n^{\lambda})^{1-\theta}.$$
	Therefore,
	\begin{align*}	\|Q_n^{\lambda}dy_nQ_n^{\lambda}\|_{2}^2&\leq \|Q_n^{\lambda}|a^*|^{2/\theta}Q_n^{\lambda}\|_{1}^{\theta}~ \|Q_n^{\lambda}|b|^{2/(1-\theta)}Q_n^{\lambda}\|_{1}^{1-\theta}\\
		&\leq \theta \|Q_n^{\lambda}|a^*|^{2/\theta}Q_n^{\lambda}\|_{1}+ (1-\theta) \|Q_n^{\lambda}|b|^{2/(1-\theta)}Q_n^{\lambda}\|_{1}\\
		&=\theta \tau(Q_n^{\lambda}|a^*|^{2/\theta})+ (1-\theta) \tau(Q_n^{\lambda}|b|^{2/(1-\theta)}),
	\end{align*}
	where the second inequality is due to the Young inequality. Then, the desired result follows from \eqref{sum-Q}.
\end{proof}

The following result can be deduced from the above last three lemmas.

\begin{cor}\label{cor-fac}
	Let $\theta\in [0,1]$, and $y=y^*\in L_2(\mathcal{M})$. Assume that the martingale difference sequence $(dy_n)_{n\geq1}$ has a factorization $dy_n=av_nb$,  $n\geq1$, with $\sup_{n\geq1}\|v_n\|_{\infty}\leq 1$. Then,
	for every $y=y^*\in L_2(\mathcal{ M})$ and  $\lambda>0,$ we have
	$$\tau(1-R^{2\lambda})\leq 12 \tau((1-R^{\lambda})A_{\theta}^2), $$
	where
	$$A_{\theta} =\big[  \theta  |a^*|^{2/\theta}+(1-\theta)   |b|^{2/(1-\theta)}+ s(y)^2\big]^{\frac{1}{2}}.$$
\end{cor}

\begin{lem}\label{Young-pre}
	Let $\theta\in (0,1)$ and  $x=(x_n)_{n\geq1}\in E(\mathcal{M},\ell_{\infty}^{\theta})$. For any $\varepsilon>0$, there exists a factorization
	$$x_n=av_n b, \quad n\geq 1$$
	such that $\sup_{n \geq 1}\|v_n\|_{\infty}\leq 1$,
	$$\|a\|_{E^{(\frac{1}{\theta})}} \|b\|_{E^{(\frac{1}{1-\theta})}}\leq \|x\|_{E(\mathcal{M},\ell_{\infty}^{\theta})}+\varepsilon$$
	and
	$$\|a\|_{E^{(\frac{1}{\theta})}}^{\frac{1}{\theta}} = \|b\|_{E^{(\frac{1}{1-\theta})}}^{\frac{1}{1-\theta}}.$$
\end{lem}
\begin{proof}
	Take $\varepsilon>0$. According to the definition of $\|\cdot\|_{E(\mathcal{M},\ell_{\infty}^{\theta})}$, there exists a factorization
	$x_n=cv_n d$,  $n\geq 1$,
	such that
	$$\|c\|_{E^{(\frac{1}{\theta})}} \|d\|_{E^{(\frac{1}{1-\theta})}}=:AB=Y\leq \|x\|_{E(\mathcal{M},\ell_{\infty}^{\theta})}+\varepsilon.$$
	Setting
	$$a:= \frac{Y^{\theta}}{A} c, \quad b:=\frac{Y^{1-\theta}}{B} d,$$
the desired assertion follows.
\end{proof}

Now we are ready to provide the proof of Theorem \ref{discrete-no-convex}.  We also need a result proved in \cite[Theorem 3.8]{JZZ}.  Let $\Phi: \mathbb R \to \mathbb R_+$ be an Orlicz function, that is, $\Phi$ is an even convex function such that $\Phi(0)=0$ and $\Phi(\infty)=\infty.$ Then $L_{\Phi}(\mathcal{M})$ can be defined according to \eqref{E-def}.
Given $1\leq p\leq q\leq \infty,$ an Orlicz function $\Phi$ is said to be $p$-convex if the function $t\mapsto\Phi(t^{1/p})$, $t>0$, is convex; and $\Phi$ is said to be $q$-concave if the function $t\mapsto\Phi(t^{1/q})$, $t>0$, is concave.
Suppose that $\Phi$ is a $p$-convex and $q$-concave Orlicz function with $2<p\leq q<\infty$. Let $y\in L_{\Phi}(\mathcal{M})$ be self-adjoint and $0\leq A \in L_{\Phi}(\mathcal{M})$  satisfy
\begin{equation*}
	\tau(1-R^{{2\lambda}}) \leq \lambda^{-2}\tau((1-R^{\lambda})A^2), \quad \lambda>0.
\end{equation*}
Then
\begin{equation}\label{phi-estimate}
	\tau(\Phi(|y|))\leq c_{p,q} \tau(\Phi(A)).
\end{equation}
\begin{proof}[Proof of Theorem \ref{discrete-no-convex}]
By the definitions of $E(\mathcal M; \ell_\infty^{r})$, $E(\mathcal M; \ell_\infty^{c})$ and the Burkholder-Gundy inequality (see e.g. \cite[Theorem 1.3]{JSZZ} or \cite{Dirk-Pag-Pot-Suk}), we have
\begin{equation}\label{rE}
\|(dx_k)_{k\geq 1}\|_{E(\mathcal M; \ell_\infty^{r})}\leq \Big\|\Big(\sum_{k\geq 1}|dx_k^*|^2\Big)^\frac{1}{2}\Big\|_{E}\lesssim_E \|x\|_E
\end{equation}
and
$$\|(dx_k)_{k\geq 1}\|_{E(\mathcal M; \ell_\infty^{c})}\leq \Big\|\Big(\sum_{k\geq 1}|dx_k|^2\Big)^\frac{1}{2}\Big\|_{E}\lesssim_E \|x\|_E.$$
Note that, for any $\theta\in [0,1]$, $E(\mathcal{M},\ell_{\infty}^c)\cap E(\mathcal{M},\ell_{\infty}^r) \subset E(\mathcal{M},\ell_{\infty}^{\theta})$. Then
$$\|(dx_k)_{k\geq 1}\|_{E(\mathcal M; \ell_\infty^{\theta})}\leq \max\{\|(dx_k)_{k\geq 1}\|_{E(\mathcal M; \ell_\infty^{c})},\|(dx_k)_{k\geq 1}\|_{E(\mathcal M; \ell_\infty^{r})}\}\lesssim_E \|x\|_E.$$
The symmetric Burkholder inequality \eqref{ncE-maximal} gives that
$$\max\{\|s_c(x)\|_E,\|s_r(x)\|_E\}\lesssim_E \|x\|_E.$$
Hence, we have
\begin{equation}\label{low-x}
	\max\{\|(dx_k)_{k\geq 1}\|_{E(\mathcal M; \ell_\infty^{\theta})}, \|(dx_k)_{k\geq 1}\|_{E(\mathcal M; \ell_\infty^{1-\theta})}, \|s_c(x)\|_E,\|s_r(x)\|_E\}\lesssim_E \|x\|_E.
\end{equation}

Now we show the inverse inequality, i.e.,
	$$\|x\|_E\lesssim_E \|(dx_k)_{k\geq1}\|_{E(\mathcal{M},\ell_{\infty}^{\theta})}+\|(dx_k)_{k\geq1}\|_{E(\mathcal{M},\ell_{\infty}^{1-\theta})}+\|s_c(x)\|_E+\|s_r(x)\|_E.$$
We only prove this inequality for the case $\theta\in (0,1)$ since  the case $\theta=0,1$ can be easily obtained with slight modification. 	To this end, we first write
	$$\|x\|_E\leq \|y\|_E+\|z\|_E,$$
	where
	$y=\mathrm{Re}(x)=(x+x^*)/2$ and $z=(x-x^*)/2i$.
 According to Lemma \ref{Young-pre}, for any $\varepsilon>0$, we can find a factorization
	$$dy_n=av_k b, \quad n\geq 1$$
	such that $\sup_{k \geq 1}\|v_k\|_{\infty}\leq 1$,
	$$\|a\|_{E^{(\frac{1}{\theta})}} \|b\|_{E^{(\frac{1}{1-\theta})}}\leq \|(dy_k)_{n\geq1}\|_{E(\mathcal{M},\ell_{\infty}^{\theta})}+\varepsilon,\quad\|a\|_{E^{(\frac{1}{\theta})}}^{\frac{1}{\theta}} = \|b\|_{E^{(\frac{1}{1-\theta})}}^{\frac{1}{1-\theta}}.$$
 Then,
	combining Corollary \ref{cor-fac} and \eqref{phi-estimate}, for any $p$-convex and $q$-concave Orlicz function $\Phi$,  we have,
\begin{equation*}
	\tau(\Phi(|y|))\leq c_{p,q} \tau(\Phi(A_{\theta})),
\end{equation*}
which, together with   Theorem 7.1 in \cite{KaltonSMS}, gives us that
\begin{align*}
	\|y\|_E^2&\lesssim_E \|A_{\theta}\|_{E}^2\lesssim_E   \|\theta  |a^*|^{2/\theta}\|_{E^{(\frac{1}{2})}}+\|(1-\theta)   |b|^{2/(1-\theta)}\|_{E^{(\frac{1}{2})}}+ \|s(y)^2\|_{E^{(\frac{1}{2})}}\\
	&=\theta \| a\|_{E^{(\frac{1}{\theta})}}^{\frac{2}{\theta}}+(1-\theta)  \| b\|_{E^{(\frac{1}{1-\theta})}}^{\frac{2}{1-\theta}}+ \|s(y)\|_{E }^2\\
	&= \| a\|_{E^{(\frac{1}{\theta})}}^{2}\times  \| b\|_{E^{(\frac{1}{1-\theta})}}^{2}+ \|s(y)\|_{E }^2\\
	&\leq \|(dy_k)_{n\geq1}\|_{E(\mathcal{M},\ell_{\infty}^{\theta})}+\|s(y)\|_{E }^2+\varepsilon,
\end{align*}
where the last equality we used the Young inequality. Note that $\|\cdot\|_{E(\mathcal{M},\ell_{\infty}^{\theta})} $ is at least a quasi-norm; see Lemma \ref{quasi-norm}. Then,
\begin{align*}
	\|y\|_E&\lesssim_E \|(dx_k)_{k\geq1}\|_{E(\mathcal{M},\ell_{\infty}^{\theta})}+\|(dx_k^*)_{k\geq1}\|_{E(\mathcal{M},\ell_{\infty}^{\theta})}+\|s_c(x)\|_E+\|s_r(x)\|_E\\
	&=\|(dx_k)_{k\geq1}\|_{E(\mathcal{M},\ell_{\infty}^{\theta})}+\|(dx_k)_{k\geq1}\|_{E(\mathcal{M},\ell_{\infty}^{1-\theta})}+\|s_c(x)\|_E+\|s_r(x)\|_E.
\end{align*}
Similarly, we have
$$\|z\|_E\lesssim_E \|(dx_k)_{k\geq1}\|_{E(\mathcal{M},\ell_{\infty}^{\theta})}+\|(dx_k)_{k\geq1}\|_{E(\mathcal{M},\ell_{\infty}^{1-\theta})}+\|s_c(x)\|_E+\|s_r(x)\|_E.$$
We finish the proof by combining the estimates of $\|y\|_E$ and $\|z\|_E.$
\end{proof}

\subsection{Proof of Theorem \ref{discrete-version-E}} In this subsection, we apply the interpolation theorem to prove Theorem \ref{discrete-version-E}.

\begin{proof}[Proof of Theorem \ref{discrete-version-E}]
According to \eqref{low-x}, we only need to show, for any $\theta \in [0,1]$,
$$\|x\|_{E}\lesssim_E \max\big\{\|x\|_{\h_E^c}, \|x\|_{\h_E^r}, \|x\|_{\h_E^{\infty_{\theta}}}\big\}.$$
 By \eqref{ncE-maximal}, we already have
\begin{equation}\label{Junge-Xu's result}
\|x\|_{E}\lesssim_E \max\big\{\|x\|_{\h_E^c}, \|x\|_{\h_E^r}, \|x\|_{\h_E^{\infty_{\frac 1 2}}}\big\}.
\end{equation}
For the case
$$\|x\|_{\h_E^{\infty_{\frac 1 2}}}< \max\big\{\|x\|_{\h_E^c}, \|x\|_{\h_E^r}\big\},$$
inequality \eqref{Junge-Xu's result} implies that
$$\|x\|_{E}\lesssim_E \max\big\{\|x\|_{\h_E^c}, \|x\|_{\h_E^r}\big\}.$$
Therefore,
$$\|x\|_{E}\lesssim_E \max\big\{\|x\|_{\h_E^c}, \|x\|_{\h_E^r}, \|x\|_{\h_E^{\infty_{\theta}}}\big\},\quad 0\leq \theta\leq 1.$$
We now turn to the case where
$$\max\big\{\|x\|_{\h_E^c}, \|x\|_{\h_E^r}\big\}\leq \|x\|_{\h_E^{\infty_{\frac 1 2}}}.$$
It follows from \eqref{Junge-Xu's result} that
\begin{equation}\label{E1/2}
\|x\|_E\lesssim_E \|x\|_{\h_E^{\infty_{\frac 1 2}}}=\|(dx_k)_{k\geq 1}\|_{E(\mathcal M; \ell_\infty)}.
\end{equation}
For ${\frac 1 2}<\theta\leq 1$, Theorem \ref{complex-interpolation-result-1} gives that
\begin{equation*}\label{case-1/2-1}
E(\mathcal M; \ell_\infty)=[E(\mathcal M; \ell_\infty^{r}), E(\mathcal M; \ell_\infty^\theta)]_{\frac{1}{2\theta}}.
\end{equation*}
Therefore,
$$\|(dx_k)_{k\geq 1}\|_{E(\mathcal M; \ell_\infty)}\leq \|(dx_k)_{k\geq 1}\|_{E(\mathcal M; \ell_\infty^{r})}^{1-\frac{1}{2\theta}}\|(dx_k)_{k\geq 1}\|_{E(\mathcal M; \ell_\infty^{\theta})}^{\frac{1}{2\theta}},$$
which, together with \eqref{E1/2} and \eqref{rE},  implies
$$\|x\|_E\lesssim_E \|(dx_k)_{k\geq 1}\|_{E(\mathcal M; \ell_\infty^{\theta})}=\|x\|_{\h_E^{\infty_\theta}}, \quad \frac 1 2<\theta \leq1.$$
As for $0\leq\theta\leq \frac 1 2$, by Theorem \ref{complex-interpolation-result-1}, we have
\begin{equation*}\label{case-0-1/2}
E(\mathcal M; \ell_\infty)=[E(\mathcal M; \ell_\infty^{\theta}), E(\mathcal M; \ell_\infty^c)]_\eta,\quad \eta=(1-2\theta)/[2(1-\theta)].
\end{equation*}
Using the same argument as above, we get
$$\|x\|_E\lesssim_E \|(dx_k)_{k\geq 1}\|_{E(\mathcal M; \ell_\infty^{\theta})}=\|x\|_{\h_E^{\infty_\theta}}, \quad 0\leq\theta<\frac 1 2.$$
Therefore, we conclude that
$$\|x\|_{E}\lesssim_E \max\big\{\|x\|_{\h_E^c}, \|x\|_{\h_E^r}, \|x\|_{\h_E^{\infty_{\theta}}}\big\},\quad 0\leq \theta\leq 1.$$
The proof is complete.
\end{proof}

%
%
%

\section{Duality: proof of Theorem \ref{duality-ns}}\label{main-duality}
In this section, we establish the following duality and then use the duality to prove Theorem \ref{duality-ns}. In particular, this duality is a common generalization of the symmetric duality  established by Junge in \cite{Ju} (see also \cite{Dirksen2}) as well as the column version proved by Junge and Perrin in \cite{JP2014}.

\begin{thm}\label{key-prop}
	Let $0\leq\theta\leq 1$ and let $E$ be a separable symmetric Banach function space with Fatou property. Suppose that $E^\times$ is $\max\big\{2(1-\theta),2\theta\big\}$-convex. Then
	$$(E(\mathcal M; \ell_1^\theta))^*=E^\times(\mathcal M; \ell_\infty^\theta)$$
	isometrically.
\end{thm}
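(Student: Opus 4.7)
The plan is to establish the duality via the natural bilinear pairing
$$\langle x, y \rangle := \sum_{n \geq 1} \tau(x_n y_n), \quad x \in E(\mathcal M;\ell_1^\theta),\ y \in E^\times(\mathcal M;\ell_\infty^\theta),$$
and to verify that this pairing realizes $E^\times(\mathcal M;\ell_\infty^\theta)$ isometrically as the dual of $E(\mathcal M;\ell_1^\theta)$. The argument follows the same general strategy as the symmetric duality of Junge \cite{Ju} and its column version in \cite{JP2014}, adapted to the asymmetric parameter $\theta$ and the general symmetric function space $E$.

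\textbf{Step 1 (the easy embedding $E^\times(\mathcal M;\ell_\infty^\theta) \hookrightarrow (E(\mathcal M;\ell_1^\theta))^*$).} Given $x \in E(\mathcal M;\ell_1^\theta)$ and $y \in E^\times(\mathcal M;\ell_\infty^\theta)$, I would invoke Facts \ref{key-fact} and \ref{key-fact-2} to fix approximate factorizations $x_n = \sum_k v_{n,k} w_{n,k}$ and $y_n = a z_n b$ whose defining norms are controlled by $\|x\|_{E(\mathcal M;\ell_1^\theta)}$ and $\|y\|_{E^\times(\mathcal M;\ell_\infty^\theta)}$ respectively. Using trace cyclicity,
$$\langle x, y \rangle = \sum_{n,k} \tau\bigl((b v_{n,k})(w_{n,k} a)z_n\bigr),$$
after which I would apply the matrix-form Cauchy--Schwarz (viewing this as a trace on $\mathcal M \overline\otimes B(\ell_2(\mathbb N^2))$) together with Hölder's inequality for pointwise products. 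The crucial computational input is that, using Lemma \ref{prod-1} and the definitions of $E_{1-\theta}$, $E_\theta$ in \eqref{p-theta}, one has continuous inclusions $E_{1-\theta} \odot (E^\times)^{(1/(1-\theta))} \hookrightarrow L_2$ and $E_\theta \odot (E^\times)^{(1/\theta)} \hookrightarrow L_2$ of norm one. Taking infima over factorizations gives the contractive embedding.

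\textbf{Step 2 (every bounded functional is represented by an element of $E^\times(\mathcal M;\ell_\infty^\theta)$).} Let $\phi \in (E(\mathcal M;\ell_1^\theta))^*$ with $\|\phi\| \leq 1$. By Remark \ref{dense-remark}, $\phi$ is determined by its values on finite sums $x_n = \sum_k v_{n,k} w_{n,k}$. I would introduce the bilinear map
$$\Phi(v, w) := \phi\Bigl(\bigl(\sum_k v_{n,k} w_{n,k}\bigr)_n\Bigr)$$
on $E_{1-\theta}(\mathcal M;\ell_2^r(\mathbb N^2)) \times E_\theta(\mathcal M;\ell_2^c(\mathbb N^2))$, which by definition of the $E(\mathcal M;\ell_1^\theta)$-norm is contractive. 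Linearizing (equivalently, Hahn--Banach applied to the second variable) realizes $\Phi$ as a contractive operator $T : E_\theta(\mathcal M;\ell_2^c(\mathbb N^2)) \to (E_{1-\theta}(\mathcal M;\ell_2^r(\mathbb N^2)))^*$. Separability of $E$ forces separability of $E_{1-\theta}$ and $E_\theta$, so their duals coincide with their Köthe duals; a direct computation using Lemma \ref{prod-1}(i)--(ii) identifies $(E_{1-\theta})^\times$ and $(E_\theta)^\times$ (together with the $L_2$ factors coming from the Hilbert-module structure) in terms of $(E^\times)^{(1/(1-\theta))}$ and $(E^\times)^{(1/\theta)}$. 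Unwinding these identifications yields operators $a \in (E^\times)^{(1/(1-\theta))}(\mathcal M)$, $b \in (E^\times)^{(1/\theta)}(\mathcal M)$, and $z = (z_n) \in L_\infty(\mathcal M \overline\otimes \ell_\infty)$ such that $y_n = a z_n b$ represents $\phi$, with product of norms bounded by $\|\phi\|$. This yields the reverse norm inequality.

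\textbf{Main obstacle.} The principal technical difficulty lies in Step 2, namely correctly computing the Köthe duals of $E_{1-\theta}$ and $E_\theta$ via the convexification/duality formulas of Lemma \ref{prod-1}(i), and then threading the resulting $L_{p'}$-factors through the Hilbert-module duality to produce a clean three-fold factorization $y_n = a z_n b$. The hypothesis $q' \geq \max\{2(1-\theta),2\theta\}$ enters here precisely to guarantee that $E_{1-\theta}$ and $E_\theta$ are themselves Banach spaces (cf.\ Lemma \ref{norm-2}), so that the Hahn--Banach linearization and the $(\cdot)^* = (\cdot)^\times$ identification are both legitimate. A secondary matter is to verify that the isometric constants survive: the approximate factorizations in Step 1 are sharp by taking infima, and the operator $T$ constructed in Step 2 has operator-norm exactly $\|\phi\|$, giving the isometry rather than only an equivalence of norms.
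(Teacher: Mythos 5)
Your Step 1 matches the paper's argument in substance: factor both sides, apply H\"older and Lemma~\ref{factorization-E-theta}(ii), take infima.  This part is fine.

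The gap is in Step~2. Currying the bounded bilinear form $\Phi$ into a contractive operator $T : E_\theta(\mathcal M;\ell_2^c(\mathbb N^2)) \to (E_{1-\theta}(\mathcal M;\ell_2^r(\mathbb N^2)))^*$, and then identifying the target as a K\"othe dual, does \emph{not} by itself produce the three-fold factorization $y_n = a z_n b$ with a \emph{single} pair of positive operators $a,b$ working simultaneously for all $n$. What you get from the identification is a representing kernel, not a factorization. Extracting one fixed $a\in(E^\times)^{(1/(1-\theta))}(\mathcal M)$ and one fixed $b\in(E^\times)^{(1/\theta)}(\mathcal M)$ such that $|\tau(z_n^* vw)|\le\|av\|_2\|wb\|_2$ for every $n$, $v$, $w$ is precisely the crux, and it requires a Grothendieck/Pisier-type factorization or a separation argument. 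The paper handles this in two stages that your sketch omits: (a) it first uses separability and the embedding $\ell_1(E(\mathcal M))\subset E(\mathcal M;\ell_1^\theta)$ (Lemma~\ref{useful-continuous-embedding}) to represent $\phi$ by a concrete sequence $(z_n)\subset E^\times(\mathcal M)$; (b) it then runs a Hahn--Banach separation argument over a cone of continuous functions on the compact set $B_{1-\theta}\times B_\theta$ to average out the measure and produce the single pair $a,b$, after which Lemma~\ref{key-lem} (a localization/support-projection lemma) supplies the middle contraction $(y_n)$ with $\sup_n\|y_n\|_\infty\le 1$. Neither (a) nor a substitute for (b) appears in your proposal; "unwinding these identifications" is exactly the step that is missing, not a consequence of the dual-space computation. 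You correctly flagged this region as the main obstacle, but the sketch does not contain an argument to close it, and the K\"othe-dual route you propose does not close it on its own.

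As a secondary remark: in the paper's Lemma~\ref{key-lem} the $L_\infty$ bound on $y_n=q_\alpha\alpha^{-1}z_n\beta^{-1}q_\beta$ is obtained by testing against arbitrary $x\in L_1(\mathcal M)$ and using the density statement of Lemma~\ref{norm-dense}. Any correct proof needs some analogue of this density step to convert the estimate $|\tau(z_n^*vw)|\le\|av\|_2\|wb\|_2$ (valid only for $v\in E_{1-\theta}$, $w\in E_\theta$) into a genuine $L_\infty$ bound. This is another place your outline is silent.
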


In order to prove this theorem, we need some preparation. Recall that
$$E_{1-\theta}:=\Big(\big[(E^\times)^{(\frac{1}{2(1-\theta)})}\big]^{\times}\Big)^{(2)},\quad  E_{\theta}:=\Big(\big[(E^\times)^{(\frac{1}{2\theta})}\big]^{\times}\Big)^{(2)}.$$
The next lemma discusses the connection between $E_{1-\theta}$ (respectively, $E_{\theta}$) and $(E^{\times})^{(\frac{1}{1-\theta})}$ (respectively, $(E^{\times})^{(\frac{1}{\theta})}$).

\begin{lem}\label{factorization-E-theta}
	Let $0\leq\theta\leq 1$ and let $E$ be a  symmetric Banach function space with Fatou property. Suppose that $E^\times$ is $\max\big\{2(1-\theta),2\theta\big\}$-convex. We have
	\begin{enumerate}[\rm (i)]
		\item $E=E_{1-\theta}\odot E_\theta$.
		\item $L_2= E_{1-\theta}\odot (E^{\times})^{(\frac{1}{1-\theta})}=E_{\theta}\odot (E^{\times})^{(\frac{1}{\theta})}$.
	\end{enumerate}
\end{lem}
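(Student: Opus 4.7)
My plan is to derive both equalities from Lozanovskii's factorization theorem $X \odot X^\times = L_1$ (valid because $E^\times$ and each of its convexifications inherit the Fatou property of $E$), together with the compatibility $(X \odot Y)^{(r)} = X^{(r)} \odot Y^{(r)}$ from Lemma \ref{prod-1}(ii) and the iterate rule $(X^{(r)})^{(s)} = X^{(rs)}$. I treat the non-degenerate range $0 < \theta < 1$; the endpoints reduce to simpler special cases.

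I would start with (ii), as it is the cleaner of the two. Set $A := [(E^\times)^{(1/(2(1-\theta)))}]^\times$, so that $E_{1-\theta} = A^{(2)}$ by definition, and observe that $(E^\times)^{(1/(1-\theta))} = [(E^\times)^{(1/(2(1-\theta)))}]^{(2)}$. Lemma \ref{prod-1}(ii) then lets me pull the $2$-convexification out of the pointwise product:
\begin{equation*}
E_{1-\theta} \odot (E^\times)^{(1/(1-\theta))} = \bigl(A \odot (E^\times)^{(1/(2(1-\theta)))}\bigr)^{(2)} = L_1^{(2)} = L_2,
\end{equation*}
the middle equality being Lozanovskii applied to the pair $X = (E^\times)^{(1/(2(1-\theta)))}$ and its K\"othe dual $X^\times = A$. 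Interchanging $\theta$ with $1-\theta$ yields the second identity of (ii).

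For (i), multiplying the two identities of (ii) gives
\begin{equation*}
(E_{1-\theta} \odot E_\theta) \odot \bigl((E^\times)^{(1/(1-\theta))} \odot (E^\times)^{(1/\theta)}\bigr) = L_2 \odot L_2 = L_1.
\end{equation*}
The two $E^\times$-convexifications recombine into $E^\times$ via the elementary identity $X^{(p)} \odot X^{(q)} = X$ whenever $1/p + 1/q = 1$ and $p, q > 1$, applied with $X = E^\times$, $p = 1/(1-\theta)$, $q = 1/\theta$. This identity is proved by combining the pointwise factorization $f = \mathrm{sgn}(f)|f|^{1/p}\cdot|f|^{1/q}$ (for $X \subseteq X^{(p)} \odot X^{(q)}$) with pointwise Young's inequality, optimized by scaling, for the reverse. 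Together with Lozanovskii for $E$ itself, this yields
\begin{equation*}
(E_{1-\theta} \odot E_\theta) \odot E^\times = L_1 = E \odot E^\times.
\end{equation*}

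The main obstacle is to cancel the common $E^\times$-factor. The forward inclusion $E_{1-\theta} \odot E_\theta \subseteq E$ is immediate: for a factorization $f = f_1 f_2$ and $g \in E^\times$, the bound $\|f_1 f_2 g\|_1 \leq \|f_1\|_{E_{1-\theta}} \|f_2\|_{E_\theta} \|g\|_{E^\times}$, followed by the supremum over $g$ and the Fatou identity $E = E^{\times\times}$, gives $\|f\|_E \leq \|f_1\|_{E_{1-\theta}} \|f_2\|_{E_\theta}$. The reverse inclusion $E \subseteq E_{1-\theta} \odot E_\theta$ is the subtle step. I would argue via K\"othe biduality: the product $F := E_{1-\theta} \odot E_\theta$ inherits the Fatou property from $E_{1-\theta}$ and $E_\theta$, hence $F = F^{\times\times}$. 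Having the forward inclusion, it suffices to show that the induced map $F^\times \to E^\times$ is injective and bounded below, which one extracts from the inverse Lozanovskii factorization inherent in the two $L_2$-identities of (ii): any $L_1$-element can be split as $\sqrt{h}\cdot\sqrt{h}$ and each factor decomposed via (ii), producing compatible decompositions that reconstitute every functional on $F$ as one on $E$.
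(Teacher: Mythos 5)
Your derivation of part (ii) is clean and correct, and is actually slicker than the paper's: applying Lozanovskii directly to the pair $X = (E^\times)^{(1/(2(1-\theta)))}$ and $X^\times$, then $2$-convexifying via Lemma \ref{prod-1}(ii), immediately gives $E_{1-\theta}\odot (E^\times)^{(1/(1-\theta))}=L_2$ in one stroke, without the case split on $\theta\gtrless\frac12$ that the paper goes through. (The paper instead first establishes (i), writes $E_{1-\theta}$ and $E_\theta$ as convexifications of $E$ times explicit $L_r$'s, and then pushes that through Lozanovskii for $E$ itself.)

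However, your proof of part (i) has a genuine gap. You correctly obtain the forward inclusion $E_{1-\theta}\odot E_\theta\subseteq E$ by duality and the Fatou identity $E=E^{\times\times}$, and you correctly reduce the problem to the identity $(E_{1-\theta}\odot E_\theta)\odot E^\times = E\odot E^\times = L_1$. But the step of ``cancelling the common $E^\times$-factor'' is asserted, not proved. An identity $F\odot E^\times = L_1$ does not by itself force $F=E$ (nor does it obviously force $F^\times=E^\times$, which is what your biduality reduction would require); one needs an actual argument that the map $F^\times\to E^\times$ induced by the inclusion $F\hookrightarrow E$ is onto with norm $1$. Your sketch --- splitting an $L_1$-element as $\sqrt{h}\cdot\sqrt{h}$ and invoking ``compatible decompositions'' coming from (ii) --- does not produce such an argument: decomposing each square-root factor via the two $L_2$-identities of (ii) does not recombine into a factorization of $h$ as (something in $E_{1-\theta}$) times (something in $E_\theta$), so the reverse inclusion $E\subseteq E_{1-\theta}\odot E_\theta$ remains unproved. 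The paper avoids this entirely by computing (i) directly: using $E=E^{\times\times}$, Lemma \ref{prod-1}(i) (with $p=2\theta$ resp. $p=\frac{1}{2(1-\theta)}$, one of which exceeds $1$ when $\theta\neq\frac12$), and Lemma \ref{prod-1}(ii), one gets the explicit identities $E^{(1/\theta)}=E_\theta\odot L_{2/(2\theta-1)}$ and $E_{1-\theta}=E^{(1/(1-\theta))}\odot L_{2/(2\theta-1)}$ (for $\theta>\frac12$, symmetrically otherwise), and then $E_{1-\theta}\odot E_\theta=E^{(1/(1-\theta))}\odot E^{(1/\theta)}=E$ follows by direct multiplication. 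I would replace your cancellation sketch by this constructive computation; once (i) is in hand this way, your streamlined proof of (ii) can be kept as is.
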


\begin{proof}
	(i): The case $\theta=\frac{1}{2}$ is trivial.  It suffices for us to verify the result for $\frac12<\theta\leq 1$ since the remaining case can be treated in a similar way.  Note that $E$ has the Fatou property. Then $	E=E^{\times\times}$, and hence, it follows from Lemma \ref{prod-1}(i) that
	\begin{align*}
		E=E^{\times\times}=\Big([(E^\times)^{(\frac{1}{2\theta})}]^{(2\theta)}\Big)^\times=\Big([(E^\times)^{(\frac{1}{2\theta})}]^\times\Big)^{(2\theta)}\odot L_{\frac{2\theta}{2\theta-1}}.
	\end{align*}
	Now using Lemma \ref{prod-1}(ii), we further get
	\begin{equation}\label{key-prod-2}E^{(\frac{1}{\theta})} = \Big([(E^\times)^{(\frac{1}{2\theta})}]^\times\Big)^{(2)} \odot L_{\frac{2}{2\theta-1}}=E_{\theta}\odot L_{\frac{2}{2\theta-1}}.
	\end{equation}
	At the same time, for $\theta>\frac12$, we have $\frac{1}{2(1-\theta)}>1$. Then, applying Lemma \ref{prod-1} again,
	\begin{equation}\label{key-prod-1}
		E_{1-\theta}=\Big(\big[(E^\times)^{(\frac{1}{2(1-\theta)})}\big]^{\times}\Big)^{(2)}= {(E^{\times\times})}^{(\frac{1}{(1-\theta)})}\odot L_{\frac{2}{2\theta-1}}=E^{(\frac{1}{(1-\theta)})}\odot L_{\frac{2}{2\theta-1}}.
	\end{equation}
	With the above argument, we conclude that
	$$E_{1-\theta}\odot E_\theta=E^{(\frac{1}{(1-\theta)})}\odot L_{\frac{2}{2\theta-1}}\odot E_\theta=E^{(\frac{1}{(1-\theta)})}\odot E^{(\frac{1}{\theta})}=E.$$
	
	We now turn to verify (ii). If $ \frac12 <\theta\leq 1$, then it follows from \eqref{key-prod-1}, Lemma \ref{prod-1}(ii) and the well-known Lozanovski\u{i} factorization theorem ($L_1=E\odot E^\times$) that
	\begin{align*}
		E_{1-\theta}\odot (E^{\times})^{(\frac{1}{1-\theta})} & =E^{(\frac{1}{(1-\theta)})}\odot L_{\frac{2}{2\theta-1}}\odot (E^{\times})^{(\frac{1}{1-\theta})}\\
		& = L_{\frac1{1-\theta}}\odot L_{\frac{2}{2\theta-1}}=L_2.
	\end{align*}
	Assume now  $0\leq \theta< \frac12$. Note that in this case $\frac12 <1-\theta\leq 1$. Similar to \eqref{key-prod-2}, we have
	$$E^{(\frac{1}{1-\theta})}= E_{1-\theta}\odot L_{\frac{2}{1-2\theta}}.$$
	By \eqref{key-prod-2} and the Lozanovski\u{i} factorization theorem, we have
	$$L_{\frac{1}{1-\theta}}= E^{(\frac{1}{1-\theta})}\odot (E^\times)^{(\frac{1}{1-\theta})}=  E_{1-\theta}\odot L_{\frac{2}{1-2\theta}}\odot(E^\times)^{(\frac{1}{1-\theta})}.$$
	At the same time, it is obvious that
	$$L_{\frac{1}{1-\theta}}=L_2\odot L_{\frac{2}{1-2\theta}}.$$
	Comparing the last two equations, we conclude that
	$$L_2= E_{1-\theta}\odot(E^\times)^{(\frac{1}{1-\theta})},$$
	which is the desired assertion. The proof of $L_2=E_{\theta}\odot (E^{\times})^{(\frac{1}{\theta})}$ is similar.
\end{proof}

\begin{lem}\label{useful-continuous-embedding}
	Let $0\leq\theta\leq 1$ and let $E$ be a  symmetric Banach function space with Fatou property. Suppose that $E^\times$ is $\max\big\{2(1-\theta),2\theta\big\}$-convex. We have
	$$\ell_1(E(\mathcal M)) \subset E(\mathcal M; \ell_1^\theta).$$
\end{lem}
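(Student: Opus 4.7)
The plan is to produce, for any $(x_n)\in\ell_1(E(\mathcal M))$, an explicit single-index factorization $x_n=v_{n,1}w_{n,1}$ (with $v_{n,k}=w_{n,k}=0$ for $k\geq 2$) whose row and column sums are controlled by $\sum_n\|x_n\|_E$. The two main ingredients are the identity $E=E_{1-\theta}\odot E_\theta$ from Lemma \ref{factorization-E-theta}(i), together with the triangle inequality in the Banach function spaces $E_{1-\theta}^{(1/2)}$ and $E_\theta^{(1/2)}$, which hold under the standing hypothesis $q'\geq\max\{2(1-\theta),2\theta\}$ (precisely as exploited in Lemma \ref{norm-2}).

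Fix $\varepsilon>0$ and, for each $n$, take the polar decomposition $x_n=u_n|x_n|$. Since $|x_n|\in E(\mathcal M)^+$, Lemma \ref{useful-factorization} applied with $E=E_{1-\theta}\odot E_\theta$ produces $a_n\in E_{1-\theta}(\mathcal M)^+$ and $b_n\in E_\theta(\mathcal M)^+$ with $|x_n|=a_nb_n$ and $\|a_n\|_{E_{1-\theta}}\|b_n\|_{E_\theta}\leq(1+\varepsilon)\|x_n\|_E$. Rescaling $(a_n,b_n)\mapsto(\lambda_na_n,\lambda_n^{-1}b_n)$ we may arrange
\[
\|a_n\|_{E_{1-\theta}}^2=\|b_n\|_{E_\theta}^2\leq(1+\varepsilon)\|x_n\|_E.
\]
Now set $v_{n,1}=u_na_n$ and $w_{n,1}=b_n$, so that indeed $v_{n,1}w_{n,1}=u_na_nb_n=x_n$.

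For the row sum we have $v_{n,1}v_{n,1}^*=u_na_n^2u_n^*\geq 0$, with $\mu(u_na_n^2u_n^*)\leq\mu(a_n^2)$ since $u_n$ is a partial isometry. Because $E_{1-\theta}^{(1/2)}=([(E^\times)^{(1/(2(1-\theta)))}]^\times)$ is a K\"othe dual, it is a Banach function space, and the triangle inequality combined with absolute convergence gives
\begin{align*}
\Big\|\Big(\sum_n v_{n,1}v_{n,1}^*\Big)^{1/2}\Big\|_{E_{1-\theta}}^2
&=\Big\|\sum_n u_na_n^2u_n^*\Big\|_{E_{1-\theta}^{(1/2)}}\\
&\leq\sum_n\|a_n^2\|_{E_{1-\theta}^{(1/2)}}=\sum_n\|a_n\|_{E_{1-\theta}}^2\leq(1+\varepsilon)\sum_n\|x_n\|_E.
\end{align*}
The column sum $\sum_n w_{n,1}^*w_{n,1}=\sum_n b_n^2$ is bounded in exactly the same way in $E_\theta^{(1/2)}$. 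Multiplying the two estimates, taking $\varepsilon\to 0$, and recalling the definition of $\|\cdot\|_{E(\mathcal M;\ell_1^\theta)}$ yields $\|(x_n)\|_{E(\mathcal M;\ell_1^\theta)}\leq\sum_n\|x_n\|_E$, which is the claimed continuous embedding.

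There is no deep obstacle here; the only point to watch is the norm-convergence of the series $\sum_n u_na_n^2u_n^*$ in $E_{1-\theta}^{(1/2)}$ and of $\sum_n b_n^2$ in $E_\theta^{(1/2)}$. Both are absolutely convergent by the bounds above, so completeness of these Banach function spaces closes the argument. The hypothesis $q'\geq\max\{2(1-\theta),2\theta\}$ enters exactly at the step of applying the triangle inequality in the convexified spaces, which is the reason it appears in the statement.
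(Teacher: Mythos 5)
Your proof is correct, and it is cleaner in structure than the paper's while relying on the same two ingredients: the factorization $E=E_{1-\theta}\odot E_\theta$ together with Lemma~\ref{useful-factorization}, and the triangle inequality in $E_{1-\theta}^{(1/2)}$ and $E_\theta^{(1/2)}$. The paper proceeds differently at the bookkeeping level: it decomposes a finite sequence $x=(x_n)_{n=1}^N$ as $x=\sum_n x^{(n)}$ with each $x^{(n)}$ having a single nonzero entry, bounds $\|x^{(n)}\|_{E(\mathcal M;\ell_1^\theta)}\leq(1+\varepsilon)\|x_n\|_E$ separately via a double-indexed $(v_{k,j}^{(n)},w_{k,j}^{(n)})$, and then sums using the abstract triangle inequality for $\|\cdot\|_{E(\mathcal M;\ell_1^\theta)}$ established in Lemma~\ref{norm-2}. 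You instead build one global factorization $v_{n,1}=u_na_n$, $w_{n,1}=b_n$ after an equalizing rescaling, and verify the row and column bounds directly by inlining the convexification argument. This avoids the artificial splitting into single-entry sequences and the $(n,k,j)$ index gymnastics, and makes the role of the hypothesis $q'\geq\max\{2(1-\theta),2\theta\}$ (which guarantees the triangle inequality in the $(1/2)$-convexifications, since $E_{1-\theta}^{(1/2)}=[(E^\times)^{(1/(2(1-\theta)))}]^\times$ is a K\"othe dual and hence Banach) transparent. Your remark about absolute norm convergence of $\sum_n u_na_n^2u_n^*$ and $\sum_n b_n^2$ is the right point to flag; the paper sidesteps it by working with finite sequences from the start, whereas your bounds $\sum_n\|a_n\|_{E_{1-\theta}}^2<\infty$ and $\sum_n\|b_n\|_{E_\theta}^2<\infty$ handle the general case directly. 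One minor note: like the paper's proof, you are implicitly using Lemma~\ref{factorization-E-theta}(i), which is stated under a Fatou-property hypothesis on $E$ not repeated in the statement of Lemma~\ref{useful-continuous-embedding}; this is an issue in the paper's phrasing, not in your argument.
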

\begin{proof}
	For a finite sequence $x=(x_n)_{n=1}^N$ with $x_n\in E(\M)$, we may write
	$$x= \sum_{n=1}^N x^{(n)}$$
	where
	$$x^{(n)}=(x^{(n)}_k)_{k\geq1}:=(\delta_{n,k} x_k)_{k\geq 1}$$
	with $\delta_{n,k}=1$ if $n=k$ and $\delta_{n,k}=0$ if $n\neq k$. From Lemma \ref{useful-factorization} and Lemma \ref{factorization-E-theta}(i), we see that for any $\varepsilon>0$ and  $k\geq 1$, there exist $a_k\in E_{1-\theta}(\mathcal M)^+$ and $b_k\in E_\theta(\mathcal M)^+$ such that $|x_k|=a_kb_k$ and
	\begin{align*}
		\|a_k\|_{E_{1-\theta}}\|b_k\|_{E_\theta}\leq (1+\varepsilon)\|x_k\|_E.
	\end{align*}
	Then we write
	$$v_{k}^{(n)}= \delta_{n,k} u_k a_k\quad \mbox{and}\quad w_{k}^{(n)}= \delta_{n,k} b_k$$
	where  $x_k=u_k |x_k|$ is the polar decomposition of $x_k$. It is obvious that
	$$x^{(n)}_k= v_{k}^{(n)} w_{k}^{(n)},\quad k\geq 1.$$
	Consider
	$$ v_{k,j}^{(n)}=\begin{cases}
		v_{k}^{(n)} & \mbox{if } j=k,\\
		0 & \mbox{if } j\neq k;
	\end{cases}\qquad w_{k,j}^{(n)}=\begin{cases}
		w_{k}^{(n)} & \mbox{if } j=k,\\
		0 & \mbox{if } j\neq k.
	\end{cases}$$
	Then we have
	$$x^{(n)}_k= \sum_{j\geq 1} v_{k,j}^{(n)} w_{k,j}^{(n)},\quad k\geq 1.$$
	Therefore,
	\begin{align*}
		\|x^{(n)}\|_{E(\mathcal M; \ell_1^\theta)}&\leq \Big\|\Big(\sum_{k,j}v_{k,j}^{(n)}{v_{k,j}^{(n)*}}\Big)^{\frac 1 2}\Big\|_{E_{1-\theta}}\cdot \Big\|\Big(\sum_{k,j }{w_{k,j}^{(n)*}} w_{k,j}^{(n)}\Big)^{\frac 1 2}\Big\|_{E_{\theta}}\\
		&= \Big\|\Big(\sum_{k}v_{k}^{(n)}{v_{k}^{(n)*}}\Big)^{\frac 1 2}\Big\|_{E_{1-\theta}}\cdot \Big\|\Big(\sum_{k}{w_{k}^{(n)*}} w_{k}^{(n)}\Big)^{\frac 1 2}\Big\|_{E_{\theta}}\\
		&=\| a_n u_n^*\|_{E_{1-\theta}} \cdot \|b_n\|_{E_\theta}\\
		&\leq (1+\varepsilon) \|x_n\|_E,
	\end{align*}
	which yields that
	$$\|x\|_{E(\mathcal M; \ell_1^\theta)}\leq \sum_{n=1}^N\|x^{(n)}\|_{E(\mathcal M; \ell_1^\theta)}\leq (1+\varepsilon)\sum_{n=1}^N \|x_n\|_E=(1+\varepsilon)\|x\|_{\ell_1(E(\mathcal M))}.$$
We finish the  proof   by letting $\varepsilon\to 0$.
\end{proof}

\begin{lem}\label{norm-dense} Let $1<q <\infty$. Suppose that $E$, $F$ are  symmetric  Banach function spaces which satisfy $L_q=E \odot F$. The following are true:
	\begin{enumerate}[{\rm (i)}]
		\item if $a$ is a positive operator in $E(\mathcal M)$  with ${\rm supp}\,a=\bf 1$, then $aF(\mathcal M)$ is norm dense in $L_q(\mathcal M)$;
		
		\item if $b$ is a positive operator in $E(\mathcal M)$ with ${\rm supp}\,b=e$, then $bF(\mathcal M)$ is norm dense in $eL_q(\mathcal M)$.
	\end{enumerate}
\end{lem}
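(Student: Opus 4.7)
The plan is to prove both assertions by a Hahn--Banach duality argument combined with a functional-calculus ``pseudo-inverse'' trick; part (ii) will be an easy modification of part (i), so I focus on (i).

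First note that the inclusion $aF(\mathcal M)\subseteq L_q(\mathcal M)$ is automatic: for any $f\in F(\mathcal M)$, $af\in E(\mathcal M)\odot F(\mathcal M)=L_q(\mathcal M)$ by the noncommutative version of the product identity $L_q=E\odot F$ recalled in Section~2. Since $1<q<\infty$, the trace pairing gives $L_q(\mathcal M)^\ast=L_{q'}(\mathcal M)$. By Hahn--Banach it therefore suffices to show that every $y\in L_{q'}(\mathcal M)$ satisfying $\tau(yaf)=0$ for all $f\in F(\mathcal M)$ must be zero.

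Second, for each $\varepsilon>0$ I would define, via the Borel functional calculus, the bounded operator $a_\varepsilon^{-1}:=\phi_\varepsilon(a)$ where $\phi_\varepsilon(t)=t^{-1}$ if $t\geq\varepsilon$ and $\phi_\varepsilon(t)=0$ otherwise, so that $a_\varepsilon^{-1}\in\mathcal M$ with $\|a_\varepsilon^{-1}\|_\infty\leq 1/\varepsilon$ and $a\,a_\varepsilon^{-1}=p_\varepsilon:=\chi_{[\varepsilon,\infty)}(a)$. For an arbitrary $z\in\mathcal M$, the element $f:=a_\varepsilon^{-1}z$ is bounded and hence lies in $F(\mathcal M)$ (because $\tau$ is finite, so the symmetric space $F$ on $(0,1]$ contains $L_\infty$, and correspondingly $\mathcal M\subseteq F(\mathcal M)$). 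Substituting $f=a_\varepsilon^{-1}z$ in the hypothesis yields
\[
0=\tau(yaf)=\tau(y p_\varepsilon z)\qquad\text{for every }z\in\mathcal M.
\]
Since $yp_\varepsilon\in L_{q'}(\mathcal M)$ and $\mathcal M$ is norm-dense in $L_q(\mathcal M)$, trace duality gives $yp_\varepsilon=0$ for every $\varepsilon>0$.

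Finally, the assumption $\mathrm{supp}(a)=\mathbf 1$ means $p_\varepsilon\nearrow\mathbf 1$ strongly as $\varepsilon\downarrow 0$. Standard norm-continuity in $L_{q'}(\mathcal M)$ (valid for $1<q'<\infty$) gives $\|y-yp_\varepsilon\|_{q'}=\|y(\mathbf 1-p_\varepsilon)\|_{q'}\to 0$, so $y=0$. This proves (i). For (ii), one applies the same reasoning to $b$: functionals on $eL_q(\mathcal M)$ are represented by classes in $L_{q'}(\mathcal M)/\{y:ye=0\}$, so by Hahn--Banach it suffices to show that $\tau(ybf)=0$ for all $f\in F(\mathcal M)$ forces $ye=0$; the functional-calculus step now produces $yp_\varepsilon=0$ with $p_\varepsilon=\chi_{[\varepsilon,\infty)}(b)\nearrow e$, and the same continuity argument concludes. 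The only substantive ingredient is the pseudo-inverse $a_\varepsilon^{-1}$, which lets us strip off $a$ (or $b$) inside the trace and reduce to density of $\mathcal M$ in $L_q(\mathcal M)$; no serious obstacle is expected.
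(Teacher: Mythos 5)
Your proof is correct, and it takes a genuinely different route from the paper's. The paper proves (i) by abstract functional analysis: it considers the multiplication map $\mathcal L_a\colon F(\mathcal M)\to L_q(\mathcal M)$, observes that its adjoint $y\mapsto ya$ is injective because ${\rm supp}\,a=\mathbf 1$, concludes that the range of $\mathcal L_a$ is weak*-dense, and then invokes reflexivity of $L_q(\mathcal M)$ to upgrade weak*-density to norm density. For (ii) the paper reduces to (i) by perturbing $b$ to a full-support operator $a'=b+(\mathbf 1-e)c(\mathbf 1-e)$ and then left-multiplying the approximating sequence by $e$. You instead run Hahn--Banach directly and make the injectivity concrete by introducing the spectral pseudo-inverse $a_\varepsilon^{-1}=\phi_\varepsilon(a)$, which strips $a$ off inside the trace to give $yp_\varepsilon=0$; since $p_\varepsilon\nearrow\mathbf 1$ and $L_{q'}$ has order-continuous norm, $y=0$ follows. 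For (ii) you avoid the auxiliary-operator reduction and argue directly in the quotient dual $L_{q'}(\mathcal M)/\{y:ye=0\}$. Both arguments are sound; the paper's is shorter and more elegant (essentially one line once the adjoint is identified, and it never needs to construct pseudo-inverses or to know that $\mathcal M\subseteq F(\mathcal M)$), while yours is more explicit about \emph{why} full support matters and makes the approximation mechanism visible, at the minor cost of verifying the order-continuity step $\|y(\mathbf 1-p_\varepsilon)\|_{q'}\to 0$ and the identification of $(eL_q(\mathcal M))^*$ with the quotient.
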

\begin{proof}
	For item {\rm (i)}, consider the operator $\mathcal L_a : F(\mathcal M)\to L_q(\mathcal M)$ defined by $x\mapsto ax$. One can easily
	see that the adjoint $\mathcal L_a^* : L_{q'}(\mathcal M)\to F^\times(\mathcal M)$ is given by $y\mapsto ay$. Since ${\rm supp}\,a=\bf 1$, $\mathcal L_a^*$
	is one to one.  This implies that the range of
	$\mathcal L_a$ is weak*-dense in $L_q(\mathcal M)$.  Since $L_q(\mathcal M)$ is reflexive, $aF(\mathcal M) = \textrm{Im}(\mathcal L_a)$ is norm dense in $L_q(\mathcal M)$.
	
	Now we check item {\rm (ii)}.  Fix $c\in E(\mathcal M)^+$ with $ {\rm supp}\, c =\bf 1$. Consider the operator
	$$a'=b+({\bf 1}-e)c({\bf 1}-e).$$
	Then $a'$ is positive with ${\rm supp}\,a'=\bf 1$.
	If $z\in  eL_q(\mathcal M)$, then by {\rm (i)}, there is a sequence of operators $(x_n)_{n\geq 1}$ in $F(\mathcal M)$ such that
	$$a'x_n\rightarrow z.$$
	Note that $z$ is left-supported  in $e$. Therefore,
	$$ea'x_n=bx_n\rightarrow z.$$
	This shows that $bF(\mathcal M)$ is norm dense in $eL_q(\mathcal M)$.
\end{proof}

\begin{lem}\label{key-lem} Let  $E$ be a symmetric Banach function space with Fatou property. Suppose that $E^\times$ is $\max\big\{2(1-\theta),2\theta\big\}$-convex.
	Suppose that $(z_n)_{n\geq 1}$ is a sequence of operators in $E^\times(\mathcal M)$. Let $\alpha\in (E^\times)^{(\frac{1}{1-\theta})}(\mathcal M)$, $\beta\in (E^\times)^{(\frac{1}{\theta})}(\mathcal M)$ be positive. If for any $v\in E_{1-\theta}(\mathcal M)$, $w\in E_\theta(\mathcal M)$,
	\begin{equation}\label{assumption}
		|\tau(z_n^* vw)|\leq \|\alpha v\|_2\|w\beta\|_2,\quad n\geq 1,
	\end{equation}
	then $(z_n)_{n\geq 1}\in E^{\times}(\mathcal M; \ell_\infty^\theta)$. Moreover,
	$$\|(z_n)_{n\geq 1}\|_{E^\times(\mathcal M; \ell_\infty^\theta)}\leq \|\alpha\|_{(E^\times)^{(\frac{1}{1-\theta})}} \|\beta\|_{(E^\times)^{(\frac{1}{\theta})}}.$$
\end{lem}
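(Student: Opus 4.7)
The plan is to construct $y_n \in \mathcal{M}$ satisfying $z_n = \alpha y_n \beta$ with $\sup_n \|y_n\|_\infty \leq 1$; such a factorization immediately gives $(z_n)_{n\geq 1} \in E^\times(\mathcal M;\ell_\infty^\theta)$ with the asserted norm bound. As a preliminary, note that under $1 < p \leq q < 2$ we have $E^\times \in \mathrm{Int}(L_{q'}, L_{p'})$ with $2 < q' \leq p'$, so $E^\times(\mathcal M) \subset L_2(\mathcal M)$; in particular $\alpha, \beta, z_n \in L_2(\mathcal M)$.

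First, I would regularize by $\alpha_\varepsilon := \alpha + \varepsilon \mathbf{1}$ and $\beta_\varepsilon := \beta + \varepsilon \mathbf{1}$, which have bounded inverses $\alpha_\varepsilon^{-1}, \beta_\varepsilon^{-1} \in \mathcal M$, and define the candidate $y_n^\varepsilon := \alpha_\varepsilon^{-1} z_n \beta_\varepsilon^{-1}$, a priori only in $L_2(\mathcal M)$. To show $y_n^\varepsilon \in \mathcal{M}$ with $\|y_n^\varepsilon\|_\infty \leq 1$, I would use the duality $\mathcal M = L_1(\mathcal M)^*$ together with the observation that every $x \in \mathcal{M}$ factors as $x = ab$ with $a, b \in \mathcal{M}$ and $\|a\|_2\|b\|_2 = \|x\|_1$ (via polar decomposition and the square root). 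Cyclicity of the trace yields
\[
\tau(y_n^\varepsilon ab) = \overline{\tau\bigl(z_n^* (\alpha_\varepsilon^{-1} b^*)(a^*\beta_\varepsilon^{-1})\bigr)},
\]
and since $\alpha_\varepsilon^{-1} b^*, a^*\beta_\varepsilon^{-1} \in \mathcal M \subset E_{1-\theta}(\mathcal M) \cap E_\theta(\mathcal M)$ (as $\mathcal M$ embeds into every symmetric space in the finite-trace setting), hypothesis \eqref{assumption} applies and gives
\[
|\tau(y_n^\varepsilon ab)| \leq \|\alpha\alpha_\varepsilon^{-1} b^*\|_2\,\|a^*\beta_\varepsilon^{-1}\beta\|_2 \leq \|a\|_2\|b\|_2,
\]
using that $\alpha\alpha_\varepsilon^{-1} = \mathbf{1} - \varepsilon\alpha_\varepsilon^{-1}$ and $\beta_\varepsilon^{-1}\beta$ are contractions in $\mathcal M$.

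Finally, by Banach--Alaoglu, the bounded family $(y_n^\varepsilon)_n \in L_\infty(\mathcal M\overline\otimes\ell_\infty)$ has a weak$^*$ cluster point $(y_n)_n$ as $\varepsilon \to 0$, still with $\sup_n \|y_n\|_\infty \leq 1$. To identify $z_n = \alpha y_n\beta$, I would pair against arbitrary $x \in \mathcal M$: cyclicity gives $\tau(z_n x) = \tau(y_n^\varepsilon \beta_\varepsilon x \alpha_\varepsilon)$, and since $\alpha, \beta \in L_2(\mathcal M)$ and $x \in \mathcal M$, one has $\|\beta_\varepsilon x\alpha_\varepsilon - \beta x\alpha\|_1 \leq \varepsilon(\|\beta x\|_1 + \|x\alpha\|_1 + \varepsilon\|x\|_1) \to 0$. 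Combining this norm convergence with weak$^*$ convergence of $y_n^\varepsilon$ against $\beta x\alpha \in L_1(\mathcal M)$ yields $\tau(z_n x) = \tau(\alpha y_n\beta x)$ for every $x \in \mathcal M$, hence $z_n = \alpha y_n\beta$ in $L_1(\mathcal M)$. The main technical subtlety is keeping every application of \eqref{assumption} within the legal test space $E_{1-\theta}(\mathcal M) \times E_\theta(\mathcal M)$; restricting all auxiliary elements $a, b, x$ to $\mathcal M$ throughout, as permitted by the finite trace, resolves this cleanly.
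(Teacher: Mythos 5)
Your proof is correct and takes a genuinely different route from the paper's. The paper works with the support projections $q_\alpha, q_\beta$: it first uses the hypothesis together with the factorization $E = E_{1-\theta}\odot E_\theta$ to show $z_n^* = q_\beta z_n^* q_\alpha$, then sets $y_n = q_\alpha\alpha^{-1}z_n\beta^{-1}q_\beta$ (inverses taken on the supports) and bounds $\|y_n\|_\infty$ by writing an arbitrary $x\in L_1(\mathcal M)$ as $x_1x_2$ with $x_1,x_2\in L_2$ and then invoking Lemma~\ref{norm-dense}(ii) and Lemma~\ref{factorization-E-theta}(ii) to \emph{approximate} $x_1$ by $\alpha v$ and $x_2$ by $w\beta$ with $v\in E_{1-\theta}(\mathcal M)$, $w\in E_\theta(\mathcal M)$ — a step that, strictly speaking, requires an extra limiting argument since the density lemma only gives approximations, not exact representations. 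Your regularization $\alpha_\varepsilon = \alpha + \varepsilon\mathbf{1}$, $\beta_\varepsilon = \beta + \varepsilon\mathbf{1}$ sidesteps both the support-projection analysis and the density lemma entirely: testing only against $x=ab\in\mathcal M$, the auxiliary elements $v=\alpha_\varepsilon^{-1}b^*$ and $w=a^*\beta_\varepsilon^{-1}$ lie in $\mathcal M$ — hence automatically in $E_{1-\theta}(\mathcal M)\cap E_\theta(\mathcal M)$ for the finite trace — and the contraction estimates $\|\alpha\alpha_\varepsilon^{-1}\|_\infty\le1$, $\|\beta_\varepsilon^{-1}\beta\|_\infty\le1$ are elementary. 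The price you pay is the final weak$^*$ compactness argument to pass from $y_n^\varepsilon$ to $y_n$ and the verification that $z_n = \alpha y_n\beta$, but the latter is handled cleanly by your $L_1$-norm estimate $\|\beta_\varepsilon x\alpha_\varepsilon - \beta x\alpha\|_1\to 0$ combined with weak$^*$ convergence of $y_n^\varepsilon$ (using that each coordinate projection on $L_\infty(\mathcal M\overline\otimes\ell_\infty)$ is normal). Net, your approach is somewhat more self-contained — it avoids Lemma~\ref{norm-dense} and the implicit approximation step in the paper — while the paper's approach is more direct in that it produces the factorizing operators $y_n$ in one shot without taking a limit.
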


\begin{proof}
	Denote by $q_\alpha$, $q_\beta$ the support projections of $\alpha$, $\beta$, respectively. We claim that
	$$z_n^*=q_\beta z_n^* q_\alpha,\quad n\geq 1.$$
	In fact, for any $v\in E_{1-\theta}(\mathcal M)$ and $w \in E_{\theta}(\mathcal M)$, the assumption \eqref{assumption} implies that
	$$|\tau(z_n^*(1-q_\alpha)vw )|\leq \tau\big(\alpha^2(1-q_\alpha)vv^*\big)^{\frac 1 2}\tau\big(w^*w\beta^2\big)^{\frac 1 2}=0$$
	and
	$$|\tau((1-q_\beta)z_n^*vw)|\leq \tau\big(\alpha^2 vv^*\big)^{\frac 1 2}\tau\big(w^*w(1-q_\beta)\beta^2\big)^{\frac 1 2}=0.$$
	By Lemma \ref{factorization-E-theta}(i), we get $z_n^*(1-q_\alpha)=0$ and $(1-q_\beta)z_n^*=0$, which implies the claim. Therefore, we may write
	$$z_n^*=q_\beta z_n^* q_\alpha=q_\beta \beta q_\beta \beta^{-1}z_n^* \alpha^{-1}q_\alpha \alpha q_\alpha=(\alpha y_n \beta)^*,\quad n\geq 1.$$
	where $y_n= q_\alpha \alpha^{-1} z_n \beta^{-1}q_\beta$ for $n\geq1$.
	
	Now it remains to check:
	\begin{equation}\label{estimate-infty}
		\|y_n\|_\infty=\| q_\alpha \alpha^{-1} z_n \beta^{-1}q_\beta \|_\infty\leq 1,\quad n\geq 1.
	\end{equation}
	Fix $n\geq 1$ and $x\in L_1(\mathcal M)$. We may write $x=x_1 x_2$ with $x_1\in L_2(\mathcal M)$, $x_2\in L_2(\mathcal M)$ and $\|x\|_1=\|x_1\|_2\|x_2\|_2$. By Lemma \ref{norm-dense}(ii) and Lemma \ref{factorization-E-theta}(ii), we may assume that $x_1=\alpha v$ and $x_2=w\beta$ for some $v\in E_{1-\theta}(\mathcal M)$ and some $w\in E_\theta(\mathcal M)$. Therefore,
	\begin{align*}
		|\tau(y_n^* x)|&=|\tau\big( q_\beta \beta^{-1} z_n^* \alpha^{-1} q_\alpha x\big)|= |\tau\big(q_\beta \beta^{-1} z_n^* \alpha^{-1}q_\alpha x_1 x_2\big)|\\
		& =|\tau (z_n^* v w)|\leq \|\alpha v\|_2\|w\beta\|_2=\|x_1\|_2\|x_2\|_2=\|x\|_1,
	\end{align*}
	which implies \eqref{estimate-infty} by duality. The proof is complete.
\end{proof}

We now  prove the  duality.

\begin{proof}[Proof of Theorem \ref{key-prop}] We first show that
	$$E^\times(\mathcal M; \ell_\infty^\theta)\subset(E(\mathcal M; \ell_1^\theta))^*.$$
	Let $z=(z_n)_{n\geq 1}$ be in $E^\times(\mathcal M; \ell_\infty^\theta)$. For any $\varepsilon>0$, there exist $a\in (E^\times)^{(\frac{1}{1-\theta})}(\mathcal M),\,\, b\in (E^\times)^{(\frac{1}{\theta})}(\mathcal M)$ and $y=(y_n)_{n\geq1}\subset L_\infty(\mathcal M)$
	such that
	$
	z_n=ay_n b$ for all $n\geq 1
	$, and
	$$\|a\|_{(E^\times)^{(\frac{1}{1-\theta})}} \sup_{n\geq 1} \|y_n\|_\infty \|b\|_{(E^\times)^{(\frac{1}{\theta})}}\leq (1+\varepsilon)\|z\|_{E^\times(\mathcal M; \ell_\infty^\theta)}.$$
	Define the mapping $\phi_z: E(\mathcal M; \ell_1^\theta)\to \mathbb C$ as follows:
	$$\phi_z(x)=\Big|\sum_{n\geq 1}\tau( x_n z_n^*)\Big|,\quad \forall x=(x_n)_{n\geq 1}\in E(\mathcal M; \ell_1^\theta).$$
	Suppose that $x=(x_n)_{n\geq 1}\in E(\mathcal M; \ell_1^\theta)$. By the definition of $E(\mathcal M; \ell_1^\theta)$, there are families $v_{n,k}\in E_{1-\theta}(\mathcal M)$ and $w_{n,k}\in E_\theta(\mathcal M)$ such that
	$$x_n=\sum_{k\geq 1} v_{n,k} w_{n,k},\quad \forall n\geq 1,$$ and
	$$\Big\|\Big(\sum_{n,k\geq 1}v_{n,k}v_{n,k}^*\Big)^{\frac 1 2}\Big\|_{E_{1-\theta}}\cdot \Big\|\Big(\sum_{n,k\geq 1}w_{n,k}^*w_{n,k}\Big)^{\frac 1 2}\Big\|_{E_{\theta}}\leq (1+\varepsilon)\|x\|_{E(\mathcal M; \ell_1^\theta)}.$$
	Therefore, we deduce from H\"{o}lder's inequality that
	\begin{align*}
		|\phi_z(x)|&=\Big|\sum_{n\geq 1}\tau(x_n z_n^*)\Big|=\Big|\sum_{n\geq 1}\tau\Big( \sum_{k\geq 1}v_{n,k} w_{n,k} z_n^*\Big)\Big|\\
		& = \Big|\sum_{n, k\geq 1}\tau\Big(v_{n,k} w_{n,k}b^*y_n^*a^*\Big)\Big|=\Big|\sum_{n, k\geq 1}\tau\Big( a^*v_{n,k} w_{n,k}b^* y_n^*\Big)\Big|\\
		&\leq \sup_{n\geq 1}\|y_n^*\|_\infty \sum_{n,k\geq 1}\|a^*v_{n,k} w_{n,k}b^*\|_{1}\\
		&\leq \sup_{n\geq 1}\|y_n\|_\infty \Big(\sum_{n,k\geq 1}\|a^*v_{n,k}\|_2^2\Big)^{\frac 1 2}\cdot \Big(\sum_{n,k\geq 1}\|w_{n,k}b^*\|_2^2\Big)^{\frac 1 2}\\
		&=\sup_{n\geq 1}\|y_n\|_\infty \Big\|a^*\sum_{n,k\geq 1}v_{n,k}v_{n,k}^* a\Big\|_1^{\frac 1 2}\cdot \Big\|b\sum_{n,k\geq 1}w_{n,k}^*w_{n,k} b^*\Big\|_1^{\frac 1 2}
	\end{align*}
	Using Lemma \ref{factorization-E-theta}(ii), we get
	\begin{align*}
		|\phi_z(x)|&\leq \sup_{n\geq 1}\|y_n\|_\infty \|a\|_{(E^\times)^{(\frac1{1-\theta})}} \Big\|\Big(\sum_{n,k\geq 1}v_{n,k}v_{n,k}^*\Big)^{\frac 1 2}\Big\|_{E_{1-\theta}}\cdot   \|b\|_{(E^\times)^{(\frac1{\theta})}} \Big\|\Big(\sum_{n,k\geq 1}w_{n,k}^*w_{n,k} \Big)^{\frac 1 2}\Big\|_{E_{\theta}}\\
		&\leq (1+\varepsilon)^2\|x\|_{E(\mathcal M; \ell_1^\theta)} \|z\|_{E^{\times}(\mathcal M; \ell_\infty^\theta)}, \quad \forall \varepsilon>0,
	\end{align*}
	which implies that
	$$\|\phi_z\|\leq \|z\|_{E^\times(\mathcal M; \ell_\infty^\theta)}.$$
	Hence, $E^\times(\mathcal M; \ell_\infty^\theta)\subset(E(\mathcal M; \ell_1^\theta))^*.$
	
	Now we show that all the continuous functionals on $E(\mathcal M; \ell_1^\theta)$ are in $E^\times(\mathcal M; \ell_\infty^\theta)$. This direction is more involved. Let $\phi: E(\mathcal M; \ell_1^\theta)\to \mathbb C$ be a norm one functional. By Lemma \ref{useful-continuous-embedding},
	$$\ell_1(E(\mathcal M)) \subset E(\mathcal M; \ell_1^\theta).$$
	Since $E$ is separable, we may assume that there exists $z=(z_n)_{n\geq 1}\in \ell_\infty(E^\times(\mathcal M))$ such that
	$$\phi(x)=\sum_{n\geq 1}\tau(x_n z_n^*),\quad \forall x=(x_n)_{n\geq 1}\in \ell_1(E(\mathcal M)).$$
	Define
	$$B_{1-\theta}=\big\{c\in (E^\times)^{(\frac{1}{2(1-\theta)})}(\mathcal M)^+: \|c\|_{(E^\times)^{(\frac{1}{2(1-\theta)})}}\leq 1\big\}$$
	and
	$$B_{\theta}=\big\{d\in  (E^\times)^{(\frac{1}{2\theta})}(\mathcal M)^+: \|d\|_{(E^\times)^{(\frac{1}{2\theta})}}\leq 1\big\}.$$
	According to Lemma \ref{factorization-E-theta}(ii) and Lemma \ref{prod-1}(ii), we have
	$$L_1=(E_{1-\theta})^{(\frac 1 2)}\odot(E^\times)^{(\frac{1}{2(1-\theta)})}=(E_{\theta})^{(\frac 1 2)}\odot(E^\times)^{(\frac{1}{2\theta})}$$
	Therefore $B_{1-\theta}$ is compact with respect to the $\sigma\big((E^\times)^{(\frac{1}{2(1-\theta)})}(\mathcal M),(E_{1-\theta})^{(\frac12)}(\mathcal M)\big)$-topology. Similarly, $B_\theta$ is compact when equipped with the $\sigma\big((E^\times)^{(\frac{1}{2\theta})}(\mathcal M),(E_\theta)^{(\frac12)}(\mathcal M)\big)$-topology. According to the definition of $E(\mathcal M; \ell_1^\theta)$, we have
	\begin{equation*}
		\begin{aligned}
			&\Big|\sum_{n, k\geq 1}\tau(z_n^* v_{n,k}w_{n,k})\Big| =
			\Big| \phi\Big[\Big(\sum_{j\geq 1} v_{n,j}w_{n,j}\Big)_{n\geq 1}\Big]\Big|\\
			&\quad \leq \Big\|\sum_{n,k\geq 1}v_{n,k}v_{n,k}^*\Big\|_{E_{1-\theta}^{(\frac{1}{2})}}^{\frac 1 2}\cdot \Big\|\sum_{n,k\geq 1}w_{n,k}^*w_{n,k}\Big\|_{ E_{\theta}^{(\frac{1}{2})}}^{\frac 1 2}\\
			&\quad \leq \frac{1}{2}\sup_{c,d}\Big\{\sum_{n,j\geq 1}
			\tau(v_{n,j}v_{n,j}^*c)+\sum_{n,j\geq 1} \tau(w_{n,j}^*w_{n,j}d):c\in B_{1-\theta}, d\in B_\theta\Big\}.
		\end{aligned}
	\end{equation*}
	The right hand side remains unchanged under multiplication with signs $\varepsilon_{n,j}$. Therefore,
	\begin{equation}\label{separation}
		\begin{aligned}
			& \sum_{n, k\geq 1}|\tau(z_n^* v_{n,k}w_{n,k})|\\
			&  \leq \frac{1}{2}\sup_{c,d}\Big\{\sum_{n,j\geq 1} \tau(v_{n,j}v_{n,j}^*c)+\sum_{n,j\geq 1} \tau(w_{n,j}^*w_{n,j}d): c\in B_{1-\theta},\,\, d\in B_\theta\Big\}.
		\end{aligned}
	\end{equation}
	
	For any finite sequences $v=(v_n)_{n\geq 1}$ in $ E_{1-\theta}(\mathcal M)$ and $w=(w_n)_{n\geq 1}$ in $E_\theta(\mathcal M)$, we define the function
	$$f_{v,w}(c,d):=\sum_{n\geq1}\tau(v_nv_n^*c)+\tau(w_n^*w_n d)-2|\tau(z_n^*v_nw_n)|,\quad c\in B_{1-\theta},\,\, d\in B_\theta.$$
	It is clear that $f_{v,w}$ is a real valued continuous function on $B_{1-\theta}\times B_\theta$. Moreover, from \eqref{separation}, it follows that
	\begin{equation*}
		\sup_{c,d} \Big\{f_{v,w}(c,d): c\in B_{1-\theta},\,\, d\in B_\theta\Big\}\geq 0.
	\end{equation*}
	Let $C$ be the set of all functions $f_{v,w}$ as above. Then $C$ is a cone. Indeed, if $\lambda\geq 0$ and $f_{v,w}\in C$, then $\lambda f_{v,w}=f_{\sqrt{\lambda} v, \sqrt{\lambda} w}\in C$. Moreover, if $f_{v,w}, f_{\tilde v, \tilde w}\in C$, then $f_{v,w}+f_{\tilde v, \tilde w}$ can be realized as $f_{v+\tilde v, w+\tilde w}$, where $v+\tilde v$ denotes the sequence starting with $v$ and followed by $\tilde v$.
	It is obvious that $C$ is disjoint from the cone
	$$C_-=\Big\{g\in C(B_{1-\theta}\times B_\theta): \sup g<0\Big\},$$
	where $C(B_{1-\theta}\times B_\theta)$ denotes all real valued continuous functions on $B_{1-\theta}\times B_\theta$. By  the Hahn-Banach separation theorem, there exists a measure $\mu$ on $B_{1-\theta}\times B_\theta$ and
	a scalar $t$ such that for $f\in C$ and $g\in C_-$,
	\begin{equation}\label{Hahn-Banach-Separation}
		\int_{B_{1-\theta}\times B_\theta}g d\mu<t\leq \int_{B_{1-\theta}\times B_\theta}fd\mu.
	\end{equation}
	Since $C$ and $C_-$ are cones, it follows that $t=0$ and $\mu$ is positive. By normalization, we may assume that $\mu$ is a probability measure. Now define positive operators $a$, $b$ by
	$$a=\int_{B_{1-\theta}\times B_\theta} c \,d\mu,\qquad b=\int_{B_{1-\theta}\times B_\theta} d \, d\mu.$$
	By convexity of $B_{1-\theta}$ and $B_\theta$, we deduce that $a\in B_{1-\theta}$ and $b\in B_\theta$. Let $(v_n)_{n\geq 1}$ in $ E_{1-\theta}(\mathcal M)$ and $(w_n)_{n\geq 1}$ in $E_{\theta}(\mathcal M)$ be finite sequences. From \eqref{Hahn-Banach-Separation}, we have
	\begin{align*}
		0&\leq \int_{B_{1-\theta}\times B_\theta}f_{v,w} d\mu
		\\&=\int_{B_{1-\theta}\times B_\theta}\sum_{n\geq1}\big[\tau(v_nv_n^*c)+\tau(w_n^*w_n d)-2|\tau(z_n^*v_nw_n)|\big]d\mu\\
		&=\int_{B_{1-\theta}\times B_\theta}\sum_{n\geq1}\big[\tau(v_nv_n^*c)+\tau(w_n^*w_n d)\big]d\mu-2\sum_{n\geq1}|\tau(z_n^*v_nw_n)|\\
		& = \sum_{n\geq1}\int_{B_{1-\theta}\times B_\theta}\big[\tau(v_nv_n^*c)+\tau(w_n^*w_n d)\big]d\mu-2\sum_{n\geq1}|\tau(z_n^*v_nw_n)|\\
		&=\sum_{n\geq1}\big[\tau(v_nv_n^*a)+\tau(w_n^*w_n b)-2|\tau(z_n^*v_nw_n)|\big].
	\end{align*}
	Therefore,
	\begin{equation*}
		\begin{aligned}
			\sum_{n\geq1}2|\tau(z_n^*v_nw_n)|&\leq \sum_{n\geq1}\big[\tau(v_nv_n^*a)+\tau(w_n^*w_n b)\big].
		\end{aligned}
	\end{equation*}
	Moreover, note that for any $r>0$, $$\sum_{n\geq1}2|\tau(z_n^*v_nw_n)|=\sum_{n\geq1}2|\tau(z_n^*(r^{\frac 1 2}v_n) (r^{-\frac 1 2}w_n))|.$$
	Repeating the arguments above, we conclude that
	\begin{equation*}
		\begin{aligned}
			\sum_{n\geq1}2|\tau(z_n^*v_nw_n)|&\leq r\sum_{n\geq1}\tau(v_nv_n^*a)+r^{-1}\sum_{n\geq1}\tau(w_n^*w_n b).
		\end{aligned}
	\end{equation*}
	Taking the infimum over $r$ and using the fact $2st=\inf_{r>0}\{r s^2+r^{-1}t^2\}$, we get
	\begin{equation*}
		\begin{aligned}
			\sum_{n\geq1}|\tau(z_n^*v_nw_n)|&\leq \Big(\sum_{n\geq1}\tau(v_nv_n^*a)\Big)^{\frac 1 2}\Big(\sum_{n\geq1}\tau(w_n^*w_n b)\Big)^{\frac 1 2}\\
			&=\Big(\sum_{n\geq 1}\|a^{\frac 1 2}v_n\|_2^2\Big)^{\frac 1 2}\Big(\sum_{n\geq 1}\|b^{\frac 1 2}w_n^*\|_2^2\Big)^{\frac 1 2}.
		\end{aligned}
	\end{equation*}
	In particular, the above implies that for any $v\in E_{1-\theta}(\mathcal M)$ and $w\in E_\theta(\mathcal M)$,
	\begin{equation*}
		|\tau(z_n^* vw)|\leq \|a^{\frac 1 2} v\|_2\|wb^{\frac 1 2}\|_2,\quad n\geq 1,
	\end{equation*}
	Applying Lemma \ref{key-lem}, we conclude that $z=(z_n)_{n\geq 1}\in E^\times(\mathcal M;\ell_\infty^\theta)$ and
	$$\|z\|_{E^\times(\mathcal M;\ell_\infty^\theta)}\leq 1.$$
	According to Remark \ref{dense-remark}, $\mathfrak{F}$ is dense in $E(\mathcal M;\ell_1^\theta)$. Therefore, the functional $\phi$ is uniquely determined by the sequence $z=(z_n)_{n\geq 1}$ and  the duality is proved.
\end{proof}


We also have the  duality between $\h_E^{1_\theta}$ and $\h_{E^\times}^{\infty_\theta}$.

\begin{cor}\label{duality}Let $0\leq\theta\leq 1$ and let $E$ be a symmetric Banach function space satisfying $E\in \emph{Int}(L_p,L_q)$ with $1<p\leq q<2$. If $E$ is $2$-concave, then
	$$(\h_E^{1_\theta}(\mathcal M))^*=\h_{E^\times}^{\infty_\theta}(\mathcal M)$$
	with equivalent norms.
\end{cor}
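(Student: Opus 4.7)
The plan is to combine the isometric duality on the ambient asymmetric spaces from Theorem \ref{key-prop} with the complementation statements of Proposition \ref{complemented-lem} and descend to the martingale Hardy subspaces. Since $E$ is separable and $q' \geq \max\{2(1-\theta), 2\theta\}$, Theorem \ref{key-prop} supplies the isometric identification $(E(\M; \ell_1^\theta))^* = E^\times(\M; \ell_\infty^\theta)$ implemented by the pairing $\langle x, y\rangle = \sum_n \T(x_n y_n^*)$. The strict inequality $q' > \max\{2(1-\theta), 2\theta\}$ then yields, through Proposition \ref{complemented-lem}(i), a bounded Stein-type projection $\mathcal{D}_1 : E(\M; \ell_1^\theta) \to \h_E^{1_\theta}$. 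Applying part (ii) of that proposition to $E^\times \in \text{Int}(L_{q'}, L_{p'})$ (whose relevant lower index is $q' > \max\{2(1-\theta), 2\theta\}$) produces a bounded Stein-type projection $\mathcal{D}_\infty : E^\times(\M; \ell_\infty^\theta) \to \h_{E^\times}^{\infty_\theta}$.

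With these ingredients I would run the standard transfer-of-duality argument. For the easy direction, each $y \in \h_{E^\times}^{\infty_\theta}$ defines a functional $x \mapsto \langle x, y\rangle$ on $\h_E^{1_\theta}$ bounded by $\|y\|$, giving the embedding $\h_{E^\times}^{\infty_\theta} \hookrightarrow (\h_E^{1_\theta})^*$. Conversely, a functional $\phi \in (\h_E^{1_\theta})^*$ extends to $\tilde\phi = \phi \circ \mathcal{D}_1$ on $E(\M; \ell_1^\theta)$ with norm at most $\|\mathcal{D}_1\|\|\phi\|$; Theorem \ref{key-prop} then represents $\tilde\phi$ by some $y \in E^\times(\M; \ell_\infty^\theta)$, and the candidate representative of $\phi$ is $\mathcal{D}_\infty(y) \in \h_{E^\times}^{\infty_\theta}$, which has norm bounded by $\|\mathcal{D}_\infty\|\|\mathcal{D}_1\|\|\phi\|$. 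This would produce the equivalence $(\h_E^{1_\theta})^* \cong \h_{E^\times}^{\infty_\theta}$.

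The one identity required to close the loop is
$$
\sum_n \T(dx_n\, y_n^*) \;=\; \sum_n \T\bigl(dx_n\, (dy_n)^*\bigr), \qquad dy_n := \E_n y_n - \E_{n-1} y_n,
$$
valid for $x \in \h_E^{1_\theta}$ and $y \in E^\times(\M; \ell_\infty^\theta)$, which is immediate on finite martingale differences from $\E_n(dx_n) = dx_n$, $\E_{n-1}(dx_n) = 0$ and trace invariance. The main point requiring care is extending this identity to the full Hardy space: density of finitely-supported martingale differences (Remark \ref{dense-remark}) combined with the boundedness of both projections justifies passing to the limit in the series. Once this is in place, the identity gives $\langle x, \mathcal{D}_\infty(y)\rangle = \langle x, y\rangle = \tilde\phi(x) = \phi(x)$ for every $x \in \h_E^{1_\theta}$, completing the duality with constants governed by $\|\mathcal{D}_1\|\,\|\mathcal{D}_\infty\|$.
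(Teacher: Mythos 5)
Your proof is correct and follows exactly the route the paper intends (the paper's proof is the one-line statement "Combining Proposition \ref{complemented-lem} with Theorem \ref{key-prop}, we obtain the duality"). You have correctly identified that $q'>\max\{2(1-\theta),2\theta\}$ serves double duty: it gives the bounded Stein projection onto $\h_E^{1_\theta}$ via Proposition \ref{complemented-lem}(i) for $E$, and also gives the bounded Stein projection onto $\h_{E^\times}^{\infty_\theta}$ via Proposition \ref{complemented-lem}(ii) for $E^\times\in\mathrm{Int}(L_{q'},L_{p'})$, and your verification that the two projections are mutually adjoint under the trace pairing (via $\tau(dx_n\,y_n^*)=\tau(dx_n\,(dy_n)^*)$, which follows from $\E_n(dx_n)=dx_n$, $\E_{n-1}(dx_n)=0$ and trace-invariance of the conditional expectations) is precisely the computation that closes the standard complemented-subspace duality argument.
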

\begin{proof} Since $E$ is $2$-concave, it follows that $E$ is separable and $E^\times$ is $2$-convex (see e.g. \cite[Lemma 4.12, Theorem 4.13]{Dirksen-Thesis}). On the other hand, according to \cite[Page 52]{LT}, if $E$ is $q$-concave with $q<\infty$, then $E$ does not have a subspace which is isomorphic to $c_0$. Combining this fact with the separability of $E$, we conclude that $E$ has Fatou property (see e.g. \cite[Page 119]{LT}). Therefore, Theorem \ref{key-prop} and Proposition \ref{complemented-lem} are applicable here. A combination of Proposition \ref{complemented-lem} and  Theorem \ref{key-prop} yields the desired duality.
\end{proof}

Now we provide the proof of Theorem \ref{duality-ns}.

\begin{proof}[Proof of Theorem \ref{duality-ns}]
 Since $E\in \mbox {Int}(L_p,L_q)$ with $1<p\leq q<2$ and $E$ is $2$-concave, it follows from \cite[Theorem 3.2]{Bekjan2018} that
$$(\h_E^c)^*=\h_{E^\times}^c,\quad (\h_E^r)^*=\h_{E^\times}^r.$$
Combining this with Theorem \ref{discrete-version-E} and Corollary \ref{duality}, we complete the proof.
\end{proof}

\begin{rem} Indeed, it can be seen from above that the assumption of $E$ in Theorem \ref{duality-ns} can be relaxed to the following: $E$ is a separable symmetric Banach function space with Fatou property and $E^\times$ is $2$-convex.
\end{rem}

\medskip

\section{Comments for asymmetric Johnson-Schechtman inequalities}

We now turn to the asymmetric versions of  noncommutative Johnson-Schechtman inequalities. Let $L_0^h(\mathcal M)$ denote the set of all self-adjoint elements in $L_0(\mathcal M)$. We say that $x\in L_1(\mathcal M) \cap L_0^h(\mathcal M)$ is mean zero if $\tau(x)=0$. We introduce the definition of noncommutative independence in the sense of Junge and Xu \cite{JX2008}.

\begin{defi} Let $\mathcal M$ be a finite von Neumann algebra equipped with a finite faithful trace $\tau$. Assume that $(\mathcal M_k)_{k\geq 1}$ are von Neumann subalgebras of $\mathcal M$.
\begin{enumerate}[\rm (i)]
\item We say that $(\mathcal M_k)_{k\geq 1}$ are independent with respect to $\tau$ if $\tau(xy)=\tau(x)\tau(y)$ holds true for every $x\in\mathcal M_k$ and for every $y$ in the von Neumann algebra generated by $(\mathcal M_j)_{j\neq k}$.

\item A sequence $(x_k)_{k\geq 1}\subset L_0^h(\mathcal M)$ is said to be independent with respect to $\tau$ if the unital von Neumann subalgebras $\mathcal M_k$, $k\geq 1$, generated by $x_k$ are independent.
\end{enumerate}
\end{defi}

Similar to Theorem \ref{discrete-version-E}, we also have asymmetric form of Johnson-Schechtman inequalities for noncommutative independent random variables.

\begin{thm}\label{main-result-2} Let $0\leq \theta\leq 1$. Assume that $E$ is a symmetric Banach function space  which is an interpolation of the couple $ (L_p,L_q)$ for $1<p\leq q<\infty$. If $E$ is $2$-convex with Fatou norm, then for any sequence $(x_k)_{k\geq 1}$ of mean zero independent random variables,
\begin{equation}\label{asymmetric-js-inequality}
\Big\|\sum_{k\geq 1} x_k\Big\|_E\simeq_E \|(x_k)_{k\geq 1}\|_{E(\mathcal M;\ell_\infty^\theta)}+\Big\|\sum_{k\geq 1} x_k\otimes e_k\Big\|_{(L_1+L_2)(\mathcal M\overline\otimes \ell_\infty)}.
\end{equation}
\end{thm}

\begin{proof} The proof is similar to that of  Theorem \ref{discrete-version-E}.  We include details for the convenience of the reader. It follows from \cite[Theorem 1.5]{JZZ} that
$$\Big\|\sum_{k\geq 1} x_k\Big\|_E\lesssim_E \|(x_k)_{k\geq 1}\|_{E(\mathcal M;\ell_\infty)}+\Big\|\sum_{k\geq 1} x_k\otimes e_k\Big\|_{(L_1+L_2)(\mathcal M\overline\otimes \ell_\infty)}.$$
Clearly, if $\|(x_k)_{k\geq 1}\|_{E(\mathcal M;\ell_\infty)}\leq \|\sum_{k\geq 1} x_k\otimes e_k\|_{(L_1+L_2)(\mathcal M\overline\otimes \ell_\infty)}$, then we get the desired inequality immediately. Now suppose that
$$\Big\|\sum_{k\geq 1} x_k\otimes e_k\Big\|_{(L_1+L_2)(\mathcal M\overline\otimes \ell_\infty)}\leq \|(x_k)_{k\geq 1}\|_{E(\mathcal M;\ell_\infty)}.$$
Then we have
$$\Big\|\sum_{k\geq 1} x_k\Big\|_E\lesssim_E \|(x_k)_{k\geq 1}\|_{E(\mathcal M;\ell_\infty)}.$$
For $\frac 1 2<\theta\leq 1$, by Theorem \ref{complex-interpolation-result-1}, we get
$$\|(x_k)_{k\geq 1}\|_{E(\mathcal M; \ell_\infty)}\leq \|(x_k)_{k\geq 1}\|_{E(\mathcal M; \ell_\infty^{r})}^{1-\eta}\|(x_k)_{k\geq 1}\|_{E(\mathcal M; \ell_\infty^{\theta})}^{\eta},\quad \eta=\frac{\theta-1}{2\theta}.$$
By the definition of $E(\mathcal M; \ell_\infty^{r})$ and \cite[Theorem 1.4]{JSZ2016}, we have
$$\|(x_k)_{k\geq 1}\|_{E(\mathcal M; \ell_\infty^{r})} \leq  \Big\|\Big(\sum_{k\geq 1}|x_k^*|^2\Big)^{\frac 1 2}\Big\|_E\simeq_E \Big\|\sum_{k\geq 1} x_k\Big\|_E.$$
Combining the last three estimates, we conclude that
$$\Big\|\sum_{k\geq 1} x_k\Big\|_E\lesssim_E \|(x_k)_{k\geq 1}\|_{E(\mathcal M;\ell_\infty^\theta)}.$$
The case $0\leq\theta<\frac 1 2$ can be dealt with in a similar manner. Therefore, we obtain
$$\Big\|\sum_{k\geq 1} x_k\Big\|_E\lesssim_E \|(x_k)_{k\geq 1}\|_{E(\mathcal M;\ell_\infty^\theta)}+\Big\|\sum_{k\geq 1} x_k\otimes e_k\Big\|_{(L_1+L_2)(\mathcal M\overline\otimes \ell_\infty)}.$$

For the converse inequality, the proof is similar to Theorem \ref{discrete-version-E}. We leave details to the reader.
\end{proof}

As a consequence of Theorem \ref{main-result-2}, if we consider noncommutative positive independent random variables, we will have the following corollary. The proof is routine, and therefore we omit the details.

\begin{cor}\label{main-corollary-2} Let $0\leq \theta\leq 1$. Assume that $E$ is a symmetric Banach function space  which is an interpolation of the couple $ (L_p,L_q)$ for $1<p\leq q<\infty$.  If $E$ is $2$-convex with Fatou norm, then for any sequence $(x_k)_{k\geq 1}$ of positive independent random variables,
\begin{equation*}\label{asymmetric-js-inequality-cor}
\Big\|\sum_{k\geq 1} x_k\Big\|_E\simeq_E \|(x_k)_{k\geq 1}\|_{E(\mathcal M;\ell_\infty^\theta)}+\Big\|\sum_{k\geq 1} x_k\otimes e_k\Big\|_{L_1(\mathcal M\overline\otimes \ell_\infty)}.
\end{equation*}
\end{cor}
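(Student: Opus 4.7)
The plan is to reduce Corollary \ref{main-corollary-2} to Theorem \ref{main-result-2} by centering, and then to exploit the simplifications afforded by positivity. Set $y_k := x_k - \tau(x_k)\textbf{1}$ for $k\geq 1$; since translating by scalars preserves independence, $(y_k)_{k\geq 1}$ is a mean-zero independent sequence, so Theorem \ref{main-result-2} applies under the standing hypothesis $p\geq \max\{2(1-\theta),2\theta\}$. Viewing $\mathcal M\overline{\otimes}\ell_\infty$ with its natural trace ($\tau$ tensored with counting measure), positivity of the $x_k$ yields the clean identity
\[
C := \Big\|\sum_{k\geq 1} x_k\otimes e_k\Big\|_{L_1(\mathcal M\overline\otimes\ell_\infty)} = \sum_{k\geq 1}\tau(x_k),
\]
which is the reason the $L_1+L_2$ term in Theorem \ref{main-result-2} collapses to a pure $L_1$ term in the corollary.

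For the upper estimate, decompose $\sum_k x_k = \sum_k y_k + C\textbf{1}$ and apply Theorem \ref{main-result-2} to $(y_k)_{k\geq 1}$:
\[
\Big\|\sum_k x_k\Big\|_E \leq \Big\|\sum_k y_k\Big\|_E + C\|\textbf{1}\|_E \lesssim_E \|(y_k)\|_{E(\mathcal M;\ell_\infty^\theta)} + \Big\|\sum_k y_k\otimes e_k\Big\|_{(L_1+L_2)(\mathcal M\overline\otimes\ell_\infty)} + C.
\]
Since $\|\cdot\|_{L_1+L_2}\leq \|\cdot\|_{L_1}$, the middle term is at most $\sum_k\|y_k\|_1\leq 2C$. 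For the first term, Proposition \ref{key-prop-banach} ensures $E(\mathcal M;\ell_\infty^\theta)$ is a Banach space, whence
\[
\|(y_k)\|_{E(\mathcal M;\ell_\infty^\theta)}\leq \|(x_k)\|_{E(\mathcal M;\ell_\infty^\theta)} + \|(\tau(x_k)\textbf{1})\|_{E(\mathcal M;\ell_\infty^\theta)};
\]
the scalar sequence, factored as $\textbf{1}\cdot(\tau(x_k)\textbf{1})\cdot\textbf{1}$, has norm at most $\|\textbf{1}\|_{E^{(1/(1-\theta))}}\|\textbf{1}\|_{E^{(1/\theta)}}\sup_k\tau(x_k)\lesssim_E C$.

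The lower bound is symmetric. The $L_1$-term is controlled via duality: $C = \tau(\sum_k x_k) = \langle \sum_k x_k,\textbf{1}\rangle \leq \|\sum_k x_k\|_E\|\textbf{1}\|_{E^\times}\lesssim_E \|\sum_k x_k\|_E$. For the asymmetric maximal term, the lower half of Theorem \ref{main-result-2} applied to $(y_k)_{k\geq 1}$ gives $\|(y_k)\|_{E(\mathcal M;\ell_\infty^\theta)}\lesssim_E \|\sum_k y_k\|_E\leq \|\sum_k x_k\|_E+C\|\textbf{1}\|_E$; combining with the scalar-sequence estimate from the upper-bound argument then yields $\|(x_k)\|_{E(\mathcal M;\ell_\infty^\theta)}\lesssim_E \|\sum_k x_k\|_E$.

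The argument is essentially bookkeeping. The only point requiring care is controlling the constant sequence $(\tau(x_k)\textbf{1})_k$ in $E(\mathcal M;\ell_\infty^\theta)$-norm, which reduces to observing that $\|\textbf{1}\|_{E^{(r)}} = \|\textbf{1}\|_E^{1/r}$ is finite for every $r>0$, since $\tau$ is finite and $E$ is a symmetric function space on $(0,1]$.
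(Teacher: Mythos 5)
The paper omits the proof of Corollary \ref{main-corollary-2}, remarking only that it is ``routine,'' so there is no official argument to compare against. Your centering reduction to Theorem \ref{main-result-2} is exactly the kind of routine argument the authors had in mind, and it is correct. The key observations are all in order: $(y_k)=(x_k-\tau(x_k)\textbf{1})$ remains independent because the unital von~Neumann subalgebra generated by $y_k$ coincides with that generated by $x_k$; positivity of $x_k$ together with $\tau(\textbf{1})=1$ gives $\|\sum_k x_k\otimes e_k\|_{L_1}=\sum_k\tau(x_k)=:C$ and $\sum_k\|y_k\|_1\le 2C$; the hypothesis $p\ge\max\{2(1-\theta),2\theta\}$ activates Proposition \ref{key-prop-banach} so the triangle inequality in $E(\mathcal M;\ell_\infty^\theta)$ is available; and the constant sequence $(\tau(x_k)\textbf{1})_k$ factors as $\textbf{1}\cdot(\tau(x_k)\textbf{1})\cdot\textbf{1}$ with norm at most $\|\textbf{1}\|_{E^{(1/(1-\theta))}}\|\textbf{1}\|_{E^{(1/\theta)}}\sup_k\tau(x_k)=\|\textbf{1}\|_E\sup_k\tau(x_k)\lesssim_E C$, finite because $\tau$ is a probability trace and $E$ lives on $(0,1]$. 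The lower bound, via $C=\tau(\sum_k x_k)\le\|\sum_k x_k\|_E\|\textbf{1}\|_{E^\times}$ and the lower half of Theorem \ref{main-result-2} applied to $(y_k)$, closes the loop. One cosmetic caveat: at the endpoints $\theta\in\{0,1\}$ the spaces are defined directly as $E(\mathcal M;\ell_\infty^r)$ and $E(\mathcal M;\ell_\infty^c)$ rather than via $E^{(1/\theta)}$, $E^{(1/(1-\theta))}$, so the factorization $\textbf{1}\cdot(\tau(x_k)\textbf{1})\cdot\textbf{1}$ should be rephrased for those two cases (take $A=\sup_k\tau(x_k)\cdot\textbf{1}$ in the row/column characterizations); the estimate is the same.
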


\noindent {\bf Acknowledgements.}
Authors are grateful to Professor Yong Jiao, Professor Narcisse Randrianantoanina and Professor Eric Ricard for some helpful comments. Authors are grateful to their home institutions (CSU, UofG) for never ending support of their research.

%
%
%
%
%


\end{document}